\def\cF{\mathcal F}
\def\cJ{\mathcal J}
\def\cP{\mathcal P}
\def\cQ{\mathcal Q}
\def\cR{\mathcal R}
\def\cX{\mathcal X}
\def\N{\mathop{\mathbb N\kern 0pt}\nolimits}
\def\Z{\mathop{\mathbb Z\kern 0pt}\nolimits}
\def\Q{\mathop{\mathbb Q\kern 0pt}\nolimits}
\def\R{\mathop{\mathbb R\kern 0pt}\nolimits}
\def\T{\mathop{\mathbb T\kern 0pt}\nolimits}
\def\SS{\mathop{\mathbb S\kern 0pt}\nolimits}
\def\ds{\displaystyle}
\def\f{\frac}
\def\supp{\mathop{\rm supp}\nolimits}
\def\p{\partial}
\def\ve{\varepsilon}
\def\vp{\varphi}
\def\supp{\operatorname{supp}}
\def\ls{\lesssim}
\newcommand{\w}[1]{\langle {#1} \rangle}
\theoremstyle{plain}
\newtheorem{theorem}{Theorem}[section]
\newtheorem{lemma}[theorem]{Lemma}
\newtheorem{corollary}[theorem]{Corollary}
\theoremstyle{definition}
\newtheorem{definition}{Definition}[section]
\newtheorem{remark}{Remark}[section]
\numberwithin{equation}{section}
\title{The partial null conditions and global smooth solutions of the nonlinear wave equations on $\mathbb{R}^d\times\mathbb{T}$ with $d=2,3$}
\author{Fei Hou$^{1,*}$ \quad Fei Tao$^{1,*}$ \quad Huicheng Yin$^{2, }
    $\footnote{Fei Hou (\texttt{fhou$@$nju.edu.cn}), Fei Tao (\texttt{dg1721009@smail.nju.edu.cn}) and Huicheng Yin (\texttt{huicheng$@$nju.edu.cn}, \texttt{05407$@$njnu.edu.cn}) are supported by the NSFC (No.~11731007, No.~12101304). In addition, Fei Hou is  supported by the NSF of Jiangsu Province (No.~BK20210170), Yin Huicheng is supported by the National key research and development program of China (No.2020YFA0713803).}\\
    [12pt]{\small 1. Department of Mathematics, Nanjing University, Nanjing, 210093, China}\\
    {\small 2. School of Mathematical Sciences and Mathematical Institute,}\\
    {\small Nanjing Normal University, Nanjing, 210023, China}}
\begin{document}

\date{}
\maketitle
\thispagestyle{empty}

\begin{abstract} In this paper, we investigate the fully nonlinear wave equations
on the product space $\mathbb{R}^3\times\mathbb{T}$ with quadratic nonlinearities and
on $\mathbb{R}^2\times\mathbb{T}$ with cubic nonlinearities, respectively. It is shown that
for the small initial data satisfying some space-decay rates at infinity, these nonlinear equations admit global
smooth solutions when the corresponding partial null conditions hold and while have almost global smooth solutions
when the partial null conditions are violated. Our proof relies on the Fourier mode decomposition of the solutions
with respect to the periodic direction, the efficient combinations of time-decay estimates for the solutions
to the linear wave equations and the linear Klein-Gordon equations, and the global weighted energy estimates.
In addition, an interesting  auxiliary energy is introduced. As a byproduct, our results can be applied
to the 4D irrotational compressible Euler equations of polytropic gases or Chaplygin gases on $\mathbb{R}^3\times\mathbb{T}$,
the 3D relativistic membrane equation and the 3D nonlinear membrane equation on $\mathbb{R}^2\times\mathbb{T}$.

\vskip 0.2 true cm

\noindent
\textbf{Keywords.} Fully nonlinear wave equation, Klein-Gordon equation, partial null condition, 

\qquad \quad null condition, auxiliary energy, weighted energy estimate

\vskip 0.2 true cm
\noindent
\textbf{2020 Mathematical Subject Classification.}  35L05, 35L15, 35L70.
\end{abstract}

\vskip 0.2 true cm

\addtocontents{toc}{\protect\thispagestyle{empty}}
\tableofcontents

\section{Introduction}
In this paper, we are concerned with the Cauchy problem of the following fully nonlinear wave equation
with quadratic nonlinearities on the product space $\R^3\times\T$
\begin{equation}\label{QWE}
\left\{
\begin{aligned}
&\Box u=Q(\p u,\p^2u),\qquad(t,x,y)\in(0,\infty)\times\R^3\times\T,\\
&(u,\p_tu)(0,x,y)=(u_{(0)},u_{(1)})(x,y),
\end{aligned}
\right.
\end{equation}
where $x=(x_1,x_2,x_3)\in\R^3$, $y\in\T:=\R/(2\pi\Z)$, $\Box:=\p_t^2-\Delta_{x}-\p_y^2$,
$\p:=(\p_0,\p_1,\cdots,\p_4)$, $\p_0:=\p_t$, $\p_i:=\p_{x_i}$ for $i=1,2,3$, $\p_4:=\p_y$,
$\Delta_x:=\sum_{i=1}^3\p_i^2$.
Note that for the small data problem, the cubic and higher order nonlinearities in $Q(\p u,\p^2u)$
can be neglected. Without loss of generality, one can assume that
\begin{equation}\label{nonlinear}
Q(\p u,\p^2u)=\sum_{\alpha,\beta,\gamma,\delta=0}^4
F^{\alpha\beta\gamma\delta}\p^2_{\alpha\beta}u\p^2_{\gamma\delta}u
+\sum_{\alpha,\beta,\gamma=0}^4Q^{\alpha\beta\gamma}\p^2_{\alpha\beta}u\p_\gamma u
+\sum_{\alpha,\beta=0}^4S^{\alpha\beta}\p_\alpha u\p_\beta u,
\end{equation}
where $F^{\alpha\beta\gamma\delta},Q^{\alpha\beta\gamma},S^{\alpha\beta}$ are constants with the symmetric conditions $F^{\alpha\beta\gamma\delta}=F^{\beta\alpha\gamma\delta}=F^{\alpha\beta\delta\gamma}=F^{\gamma\delta\alpha\beta}$, $Q^{\alpha\beta\gamma}=Q^{\beta\alpha\gamma}$
and $S^{\alpha\beta}=S^{\beta\alpha}$.

Meanwhile, we also investigate the Cauchy problem of the following  fully nonlinear wave equations
with cubic nonlinearities on the product space $\R^2\times\T$
\begin{equation}\label{QWE-cubic}
\left\{
\begin{aligned}
&\Box u=\mathcal{C}(\p u,\p^2 u),\qquad(t,x,y)\in(0,\infty)\times\R^2\times\T,\\
&(u,\p_tu)(0,x,y)=(u_{(0)},u_{(1)})(x,y),
\end{aligned}
\right.
\end{equation}
where $x=(x_1,x_2)\in\R^2$, $y\in\T:=\R/(2\pi\Z)$, $\Box:=\p_t^2-\Delta_{x}-\p_y^2$,
$\p:=(\p_0,\p_1,\p_2,\p_3)=(\p_t, \p_{x_1}, \p_{x_2},\p_{y})$, $\Delta_x:=\sum_{i=1}^2\p_i^2$.
As in \eqref{nonlinear}, without loss of generality, the cubic nonlinearities $\mathcal{C}(\p u,\p^2u)$ can be written as:
\begin{equation}\label{nonlinear-cubic}
\begin{split}
\mathcal{C}(\p u,\p^2u)&=\sum_{\alpha,\beta,\gamma,\delta,\mu,\nu=0}^3
F^{\alpha\beta\gamma\delta\mu\nu}\p^2_{\alpha\beta}u\p^2_{\gamma\delta}u\p^2_{\mu\nu}u
+\sum_{\alpha,\beta,\gamma,\delta,\mu=0}^3 G^{\alpha\beta\gamma\delta\mu}\p^2_{\alpha\beta}u\p^2_{\gamma\delta}u\p_{\mu}u\\
&+\sum_{\alpha,\beta,\gamma,\delta=0}^3 H^{\alpha\beta\gamma\delta}\p^2_{\alpha\beta}u\p_\gamma u\p_\delta u+
\sum_{\alpha,\beta,\gamma=0}^3S^{\alpha\beta\gamma}\p_{\alpha}u\p_\beta u\p_\gamma u,
\end{split}
\end{equation}
where $F^{\alpha\beta\gamma\delta\mu\nu},G^{\alpha\beta\gamma\delta\mu},H^{\alpha\beta\gamma\delta},S^{\alpha\beta\gamma}$ are constants
with the symmetric conditions $F^{\alpha\beta\gamma\delta\mu\nu}=F^{\gamma\delta\alpha\beta\mu\nu}$ $=F^{\mu\nu\gamma\delta\alpha\beta}=F^{\alpha\beta\mu\nu\gamma\delta}
=F^{\beta\alpha\gamma\delta\mu\nu}=F^{\alpha\beta\delta\gamma\mu\nu}=F^{\alpha\beta\delta\gamma\nu\mu}$, $G^{\alpha\beta\gamma\delta\mu}=G^{\gamma\delta\alpha\beta\mu}=G^{\beta\alpha\gamma\delta\mu}=G^{\alpha\beta\delta\gamma\mu}$, $H^{\alpha\beta\gamma\delta}=H^{\beta\alpha\gamma\delta}=H^{\alpha\beta\delta\gamma}$ and $S^{\alpha\beta\gamma}=S^{\beta\alpha\gamma}=S^{\alpha\gamma\beta}$.

\begin{definition}\label{YH-1}[Partial null condition for \eqref{QWE} with \eqref{nonlinear}]
If
\begin{equation}\label{null:def}
\sum_{\alpha,\beta,\gamma,\delta=0}^3F^{\alpha\beta\gamma\delta}\xi_\alpha\xi_\beta\xi_\gamma\xi_\delta\equiv0,
\quad\sum_{\alpha,\beta,\gamma=0}^3Q^{\alpha\beta\gamma}\xi_\alpha\xi_\beta\xi_\gamma\equiv0,
\quad\sum_{\alpha,\beta=0}^3S^{\alpha\beta}\xi_\alpha\xi_\beta\equiv0
\end{equation}
hold for any $\xi=(\xi_0,\xi_1,\xi_2,\xi_3)\in\{\pm1,\SS^2\}$, \emph{i.e.} $\xi_0^2=\xi_1^2+\xi_2^2+\xi_3^2=1$,
then we call \eqref{QWE} with \eqref{nonlinear} satisfies the partial null condition.
\end{definition}

Analogously, as in Definition \ref{YH-1}, one has
\begin{definition}\label{YH-2}[Partial null condition for \eqref{QWE-cubic} with \eqref{nonlinear-cubic}]
If
\begin{equation}\label{null:def-cubic}
\begin{split}
&\sum_{\alpha,\beta,\gamma,\delta,\mu,\nu=0}^2 F^{\alpha\beta\gamma\delta\mu\nu}\xi_\alpha\xi_\beta\xi_\gamma\xi_\delta\xi_\mu\xi_\nu\equiv0,
\quad\sum_{\alpha,\beta,\gamma,\delta,\mu=0}^2 G^{\alpha\beta\gamma\delta\mu}\xi_\alpha\xi_\beta\xi_\gamma\xi_\delta\xi_\mu\equiv0,\\
&\sum_{\alpha,\beta,\gamma,\delta=0}^2 H^{\alpha\beta\gamma\delta}\xi_\alpha\xi_\beta\xi_\gamma\xi_\delta \equiv0,
\quad\sum_{\alpha,\beta,\gamma=0}^2 S^{\alpha\beta\gamma}\xi_\alpha\xi_\beta\xi_\gamma \equiv0
\end{split}
\end{equation}
hold for any $\xi=(\xi_0,\xi_1,\xi_2)\in\{\pm1,\SS^1\}$, i.e. $\xi_0^2=\xi_1^2+\xi_2^2=1$,
then we call \eqref{QWE-cubic} with \eqref{nonlinear-cubic} satisfies the partial  null condition.
\end{definition}

It is pointed out that there are no any restrictions on the $y-$directon in the partial null conditions of Definitions \ref{YH-1}
and \ref{YH-2}.

Set
\begin{equation}\label{initial:data}
\ve:=\sum_{i+j\le N+1}\|\w{x}^{2+i+j}\nabla_x^i\p_y^ju_{(0)}\|_{L^2_{x,y}}
+\sum_{i+j\le N}\|\w{x}^{3+i+j}\nabla_x^i\p_y^j u_{(1)}\|_{L^2_{x,y}},
\end{equation}
where $N\ge25$, $\nabla_x=(\p_1,..., \p_d)$, $\|\cdot\|_{L^2_{x,y}}:=\|\cdot\|_{L^2(\R^d\times\T)}$ with $d=2,3$
and $\w{x}:=\sqrt{1+|x|^2}$.

\vskip 0.3 true cm

The main results of this paper are:
\begin{theorem}\label{thm1}
For $\ve$ defined by \eqref{initial:data} with $d=3$, then

($A_1$) there is $\ve_0>0$ such that when $\ve\le\ve_0$, problem \eqref{QWE} with the partial null condition \eqref{null:def}
admits a global solution $u\in \bigcap\limits_{j=0}^{N+1}C^{j}([0,\infty), H^{N+1-j}(\R^3\times\T)))$.
Moreover, there is a constant $C>0$ such that
\begin{equation}\label{dyu:pw}
|\p u(t,x,y)|+|\p^2 u(t,x,y)|\le C\ve\w{t}^{-1},\quad |\p_y u(t,x,y)|\le C\ve\w{t+|x|}^{-3/2}.
\end{equation}

($A_2$) there are two positive constants $\ve_0$ and $\kappa_0$ such that when $\ve\le\ve_0$, problem \eqref{QWE} has a
solution $u\in \bigcap\limits_{j=0}^{N+1}C^{j}([0,T_\ve], H^{N+1-j}(\R^3\times\T)))$ with $T_\ve:=e^{\kappa_0/\ve}-1$.
In addition, \eqref{dyu:pw} holds for any $t\in[0,T_\ve]$.

\end{theorem}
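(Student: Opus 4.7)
The first move is to decouple the periodic direction via Fourier series, writing $u(t,x,y)=\sum_{k\in\Z}u_k(t,x)e^{iky}$, so that the single equation on $\R^3\times\T$ becomes a system: the zero mode satisfies the 3D nonlinear wave equation $(\p_t^2-\Delta_x)u_0=\mathcal{P}_0 Q(\p u,\p^2 u)$, and each nonzero mode ($k\ne0$) satisfies a 3D Klein-Gordon equation with mass $|k|\ge1$,
\begin{equation*}
(\p_t^2-\Delta_x+k^2)u_k=\mathcal{P}_k Q(\p u,\p^2 u),
\end{equation*}
where $\mathcal{P}_k$ denotes projection onto the $k$-th Fourier mode. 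Because $\p_y u=\sum_{k\ne 0}(ik)u_k e^{iky}$ has no zero mode, every occurrence of $\p_y u$ is controlled by Klein-Gordon dispersive estimates; this is the structural reason behind the stronger pointwise bound $|\p_y u|\le C\ve\w{t+|x|}^{-3/2}$ in \eqref{dyu:pw}.

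For part $(A_1)$, I would work with the Klainerman vector fields associated to $\Box_x:=\p_t^2-\Delta_x$, namely $\Gamma=\{\p_\alpha,\Omega_{ij}=x_i\p_j-x_j\p_i,L_i=t\p_i+x_i\p_t,S=t\p_t+x\cdot\nabla_x\}$ together with plain $y$-derivatives, all of which commute with $\Box=\Box_x-\p_y^2$ (modulo the usual controllable error from $S$). After commuting $\Gamma^I\p_y^\ell$ through the equation and projecting onto the $k$-th mode, the partial null condition in Definition \ref{YH-1} forces the wave-wave interactions that feed the zero mode to be genuine Klainerman null forms: the relation \eqref{null:def} is imposed precisely on null vectors of the $(t,x)$-d'Alembertian with no constraint on $\xi_4$. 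Standard null-form estimates then supply the decisive $\w{t-|x|}/\w{t+|x|}$ gain, while for the nonzero modes the 3D Klein-Gordon decay $\w{t}^{-3/2}$ is already strong enough that quadratic interactions involving at least one Klein-Gordon factor are integrable in time without any structural condition.

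The bootstrap is then run on a weighted generalized energy of schematic form $\sum_{|I|+\ell\le N}\|\w{x}^{\sigma}\Gamma^I\p_y^\ell u(t)\|_{L^2_{x,y}}$, augmented by the auxiliary energy flagged in the abstract, which in light of the Fourier splitting is presumably designed to recover a derivative loss in the resonant Klein-Gordon--Klein-Gordon-to-zero-mode channel $u_k\cdot u_{-k}\to u_0$. The main steps are: (i) prove weighted $L^\infty$ decay estimates for $\Box$ and $\Box+k^2$ on $\R^{1+3}$ with $k$-summable constants, combining Klainerman-Sobolev with the pointwise Klein-Gordon decay; (ii) derive a weighted energy inequality for the coupled system, using the partial null structure to pick up the $\w{t-|x|}$ factor on the wave-wave-to-wave contribution; (iii) close the bootstrap matching \eqref{dyu:pw}. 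The main obstacle is exactly the resonant Klein-Gordon--Klein-Gordon-to-zero-mode interaction, where neither Klein-Gordon decay alone nor the null structure alone suffices: one must use both together with the auxiliary energy, and the generous requirement $N\ge25$ appears tailored to absorb the Fourier summation in $k$ and the standard high-low splittings.

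For part $(A_2)$ the partial null condition is dropped, so the zero mode is now a generic quadratic 3D wave equation, to which the John-Klainerman almost global existence theory applies with lifespan $T_\ve=e^{\kappa_0/\ve}-1$. Running the same weighted energy scheme on $[0,T_\ve]$, but now estimating the wave-wave-to-zero-mode term only through the bootstrap assumption --- which is precisely what produces the exponential rather than infinite lifespan --- yields both the stated existence interval and the pointwise bounds \eqref{dyu:pw} throughout $[0,T_\ve]$.
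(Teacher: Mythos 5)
Your overall architecture — Fourier decomposition in $y$ yielding a 3D wave equation for the zero mode coupled to a tower of 3D Klein--Gordon equations for the nonzero modes, then a bootstrap on weighted energies and pointwise decay, with the partial null condition used only for the zero-mode (wave) interactions — is exactly the paper's strategy, and your reading of what the partial null condition buys and why $|\p_y u|$ decays faster are both on target.

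However, there is a concrete gap in your plan, and it is precisely the obstacle the paper is built around. You write that you would work with the full Klainerman set including $S=t\p_t+x\cdot\nabla_x$, "all of which commute with $\Box=\Box_x-\p_y^2$ (modulo the usual controllable error from $S$)," and you propose to get pointwise decay from Klainerman--Sobolev. Neither statement survives the Fourier projection. Since $S$ involves only $(t,x)$, $[S,\Box]=[S,\Box_x]=-2\Box_x=-2\Box-2\p_y^2$, and after projecting onto the $k$-th mode, $[S,\Box_x+k^2]=-2(\Box_x+k^2)+2k^2$: the residual $2k^2 u_k$ is a genuine, non-negligible lower-order term (it is comparable in strength to the mass term itself), not the "usual controllable error" $-2\Box$. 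Consequently the classical Klainerman--Sobolev inequality \eqref{HCC-5}, which relies on $S$, cannot be applied. The paper explicitly removes $S$ from the vector field set, and compensates by two separate devices: for the nonzero modes it uses Georgiev's $L^\infty$--$L^2$ decay estimate for Klein--Gordon (rescaled so the mass-$|n|$ equation becomes mass-$1$), and for the zero mode it introduces the auxiliary energy
$\cX_k(t)=\sum_{|a|\le k-1}\big(\|\w{t-|x|}P_{=0}\p^2 Z^a u\|_{L^2}+\|\w{t+|x|}P_{=0}\bar\p\p Z^a u\|_{L^2(|x|\ge\w{t}/3)}\big)$
together with a weighted Sobolev inequality (Lemma~\ref{Weighted-Sobelev}) that trades angular/spatial derivatives and the $\w{t\pm|x|}$ weights for pointwise decay without ever invoking $S$. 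The inequality $\cX_k\ls E_k+\ve_1^2$ (Lemma~\ref{lem:aux:energy}), proven by expressing $\w{t-|x|}\p^2$ and $\w{t+|x|}\bar\p\p$ through $L_i,\Omega_{ij}$ and $\Box_{t,x}$ alone (Lemma~5.1), is what stands in for the scaling identity.

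Your guess about the purpose of the auxiliary energy is also slightly off: it is not primarily about the resonant Klein--Gordon--Klein--Gordon $\to$ zero-mode channel (which the paper handles directly through the Fourier summability of $|n|^{\text{const}-N}$ in \eqref{aux:energy3}), but rather is the replacement for the missing scaling field in establishing zero-mode pointwise decay both near and away from the light cone. With these corrections — drop $S$, replace Klainerman--Sobolev by the weighted Sobolev lemma coupled to $\cX_k$, and use Georgiev's estimate for the massive modes — your bootstrap scheme does align with the paper's proof, including the almost-global part $(A_2)$ obtained by simply omitting the null-condition lemma at the low-order energy level.
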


\begin{theorem}\label{thm1-cubic}
For $\ve$ defined by \eqref{initial:data} with $d=2$, then

($B_1$) there is $\ve_0>0$ such that when $\ve\le\ve_0$, problem \eqref{QWE-cubic} with the partial null condition \eqref{null:def-cubic}
admits a global solution $u\in \bigcap\limits_{j=0}^{N+1}C^{j}([0,\infty), H^{N+1-j}(\R^2\times\T)))$. Furthermore, there is a constant $C>0$ such that
\begin{equation}\label{dyu:pw-cubic}
|\p u(t,x,y)|+|\p^2 u(t,x,y)|\le C\ve\w{t}^{-1/2},\quad |\p_y u(t,x,y)|\le C\ve\w{t+|x|}^{-1}.
\end{equation}

($B_2$) there are two positive constants $\ve_0$ and $\kappa_0$ such that when $\ve\le\ve_0$, problem \eqref{QWE-cubic}
has a solution $u\in \bigcap\limits_{j=0}^{N+1}C^{j}([0,T_\ve], H^{N+1-j}(\R^2\times\T)))$ with $T_\ve:=e^{\kappa_0/\ve^2}-1$.
In addition, \eqref{dyu:pw-cubic} holds for any $t\in[0,T_\ve]$.

\end{theorem}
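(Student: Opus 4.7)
The plan is to reduce the problem on $\R^2\times\T$ to a coupled system on $\R^{1+2}$ by Fourier-decomposing in the periodic variable, and then to run a vector-field bootstrap argument exploiting the different decay mechanisms of the two pieces. Write $u(t,x,y)=u_0(t,x)+\tilde u(t,x,y)$ with $u_0:=\f{1}{2\pi}\int_\T u\,dy$ and $\tilde u=\sum_{k\neq 0}u_k(t,x)e^{iky}$. Then $u_0$ solves a 2D wave equation $(\p_t^2-\Delta_x)u_0=\cC_0$ while each $u_k$ ($k\neq 0$) solves a 2D Klein-Gordon equation $(\p_t^2-\Delta_x+k^2)u_k=\cC_k$ with mass $|k|\ge 1$. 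The key observation is that the partial null condition of Definition \ref{YH-2} amounts exactly to the standard Klainerman null condition on $\R^{1+2}$ for all cubic terms that contain no $\p_y$ factor: such terms are bilinear/trilinear forms on $\{\xi_0^2=\xi_1^2+\xi_2^2=1\}$, i.e., the null cone of the 2D wave operator. Terms involving at least one $\p_y$ derivative (equivalently, at least one factor from $\tilde u$) do not need any null structure because each such factor carries the Klein-Gordon decay $\w{t+|x|}^{-1}$.

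Next, I would work with the commuting vector fields $\Gamma=\{\p_0,\p_1,\p_2,\p_y,\Omega_{12}=x_1\p_2-x_2\p_1,L_i=x_i\p_t+t\p_i\,(i=1,2),S=t\p_t+x\cdot\nabla_x\}$, all of which commute with $\Box$. The boosts $L_i$ and the rotation $\Omega_{12}$ also commute with $-\p_y^2$, so they may be applied to Klein-Gordon modes with mass; the scaling $S$ only touches the wave part. Define a weighted energy
\begin{equation*}
\mathcal E_N(t)=\sum_{|I|\le N}\|\p\Gamma^I u(t)\|_{L^2_{x,y}}^2,
\end{equation*}
together with an auxiliary Alinhac-type ghost-weight energy $\sum_{|I|\le N}\int e^{-q(r-t)}|\bar\p\Gamma^I u|^2\,dx\,dy$, where $q(s)=\int_{-\infty}^s\w{\tau}^{-1-\delta}d\tau$ and $\bar\p$ denotes tangential derivatives to the Minkowski cone; this auxiliary piece is precisely what gives the extra spacetime-integrable gain of a good derivative needed in the borderline 2D cubic case. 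Bootstrap assumptions are exactly the pointwise bounds \eqref{dyu:pw-cubic} for $|I|\le N/2$, and $\mathcal E_N(t)\le C\ve^2\w{t}^\sigma$ for a small $\sigma\ge 0$. Klainerman-Sobolev on $\R^{1+2}$ applied mode-by-mode recovers $\w{t}^{-1/2}$ for the zero mode, and the Klein-Gordon version recovers $\w{t+|x|}^{-1}$ on each nonzero mode, and one uses the mass gap $|k|\ge 1$ to sum in $k$ after losing a few $\p_y$'s.

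Plugging these pointwise bounds into $\cC(\p u,\p^2 u)$ and estimating in $L^2_{x,y}$, the dangerous terms are the purely zero-mode trilinears $(\p u_0)^2\p^2 u_0$, which a priori give $\w{t}^{-3/2}\in L^1_t$ borderline failure. The null structure rewrites such a term as a sum of cubic expressions containing at least one tangential derivative $\bar\p u_0$; via the ghost-weight identity, the spacetime integral of the energy flux absorbs this bad term, and one obtains $\int_0^t\w{s}^{-2}ds<\infty$ in the energy inequality, closing the bootstrap with $\sigma=0$. Every remaining term has at least one nonzero-mode factor contributing $\w{t+|x|}^{-1}$, yielding an integrable $\w{t}^{-2}$ source directly without invoking the null condition. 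This proves part $(B_1)$.

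For part $(B_2)$, I would rerun the same bootstrap but allow the energy to grow logarithmically: $\mathcal E_N(t)\le C\ve^2(1+\log\w{t})^K$. Without the null structure, the $(\p u_0)^2\p^2 u_0$ terms produce a Gr\"onwall source of size $\ve^2\w{t}^{-1}$, integrating to $\log(1+t)$; the bootstrap closes as long as $\ve^2\log(1+t)\ll 1$, which gives the stated lifespan $T_\ve=e^{\kappa_0/\ve^2}-1$. The main obstacle throughout is the $\R^{1+2}$ cubic borderline: the pointwise decay $\w{t}^{-1/2}$ is just barely insufficient, so the ghost-weight method must be executed carefully and coupled mode-by-mode with the Klein-Gordon energy to handle the mixed wave/KG interaction terms; the auxiliary energy mentioned in the abstract is the precise object that reconciles the 2D wave ghost-weight estimate with the mass-weighted Klein-Gordon estimate in a single Gr\"onwall loop.
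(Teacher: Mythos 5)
Your high-level decomposition into a zero mode (2D wave) plus nonzero modes (2D Klein--Gordon with masses $|k|\ge 1$) matches the paper, and your observation that the partial null condition only constrains the $\p_y$-free part of the cubic form is exactly right. However, the proposal contains a genuine gap in the choice of commuting vector fields. You list $S=t\p_t+x\cdot\nabla_x$ among the symmetries ``all of which commute with $\Box$,'' but on $\R^2\times\T$ one has $[S,\Box]=-2(\p_t^2-\Delta_x)=-2\Box-2\p_y^2$, which is not a multiple of $\Box$, and $S$ likewise fails to commute (even up to lower-order terms) with each $\p_t^2-\Delta_x+k^2$. The paper stresses exactly this obstruction: the anisotropy of $\R^d\times\T$ removes the scaling field from the commuting algebra, which is why the classical Klainerman--Sobolev inequality \eqref{HCC-5} cannot be applied ``mode-by-mode'' to recover $\w{t}^{-1/2}$ decay for $u_0$. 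The caveat that ``$S$ only touches the wave part'' does not rescue the plan, because $u_0$ satisfies a \emph{nonlinear} wave equation whose source $\cC_0$ contains nonzero-mode factors, so an energy estimate for $SZ^au_0$ would still push $S$ onto the Klein--Gordon modes. The paper avoids $S$ entirely: the pointwise decay of the zero mode is obtained from weighted Sobolev embeddings on annuli (the inequalities \eqref{Sobolev:ineq-cubic}, \eqref{Linfty-cubic}) combined with the auxiliary energy $\cX_k(t)$, which controls $\|\w{t-|x|}P_{=0}\p^2 Z^a u\|_{L^2}$ and $\|\w{t+|x|}P_{=0}\bar\p\p Z^a u\|_{L^2(|x|\ge\w{t}/3)}$; this is bounded via the Klainerman--Sideris-type identity \eqref{wL2:ineq}, which expresses $(t^2-|x|^2)\p^2 f$ through $\Box_{t,x}f$ and the fields $L_i,\Omega_{ij}$, with $L_i$ playing the role that $S$ plays in \cite{KS96}.

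There is a second discrepancy: you describe the auxiliary energy as an Alinhac ghost weight $\int e^{-q(r-t)}|\bar\p\Gamma^I u|^2\,dx\,dy$ and identify it with ``the auxiliary energy mentioned in the abstract,'' but the paper's $\cX_k(t)$ is a fixed-time weighted $L^2$ norm of second derivatives, not a spacetime tangential-flux term, and no ghost weight appears in the argument. A ghost-weight route for the zero mode is plausible as an alternative, but you would still have to supply the $S$-free decay mechanism above, and you would need to reconcile the ghost-weight flux with the Georgiev-type $L^\infty$--$L^2$ decay estimate \eqref{pwL2:KG-cubic} used for the nonzero modes. Finally, your bootstrap for $(B_1)$ aims to close with $\sigma=0$ at the top order; the paper instead tolerates $(1+t)^{\ve_2}$ growth for $E_N+\cX_N$ and only proves a uniform bound at order $N-9$ (via Lemma \ref{lem:energy-cubic}), which is the usual state of affairs for 2D quasilinear wave problems, so the stronger $\sigma=0$ claim should not be assumed without proof.
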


\begin{remark}\label{HC1}
If the initial data $(u_{(0)}, u_{(1)})\in C_0^{\infty}(\Bbb R^4)$ in \eqref{QWE}
with $\|u_{(0)}\|_{H^7(\Bbb R^4)}+\|u_{(1)}\|_{H^6(\Bbb R^4)}$ being small
or $(u_{(0)}, u_{(1)})\in C_0^{\infty}(\Bbb R^3)$ in \eqref{QWE-cubic}
with $\|u_{(0)}\|_{H^5(\Bbb R^3)}+\|u_{(1)}\|_{H^4(\Bbb R^3)}$ being small,
then it follows from the results in \cite{K-P} or Chapter 6 of \cite{Hormander97book} that both \eqref{QWE}
and \eqref{QWE-cubic} have global smooth solutions. In addition, if the initial data in \eqref{QWE} and \eqref{QWE-cubic}
are independent of the variable $y$, which is certainly periodic with respect to $y$, then the equations
in $(A_1)$ of Theorem \ref{thm1} and $(B_1)$ of Theorem \ref{thm1-cubic}
actually become the 3D and 2D fully nonlinear wave equations with the null condition structures, respectively.
In this case, the corresponding 3D and 2D nonlinear wave equations have global smooth small
solutions (see \cite{Christodoulou86}, \cite{Klainerman} and \cite{Alinhac01b}).
This motivates us to obtain the global classical solutions
to the more general problems in $(A_1)$ of Theorem \ref{thm1} on $\R^3\times\T$
and $(B_1)$ of Theorem \ref{thm1-cubic} on $\R^2\times\T$.
\end{remark}

\begin{remark}\label{HC2}
Note that the results of almost global
solutions in $(A_2)$ of Theorem \ref{thm1} and $(B_2)$ of Theorem \ref{thm1-cubic} are optimal.
Indeed, for examples, if the initial data in \eqref{QWE} and \eqref{QWE-cubic} are independent of the variable $y$,
then the equations in \eqref{QWE} and \eqref{QWE-cubic}
actually become the 3D and 2D nonlinear wave equations without the null condition structures, respectively.
Therefore, by the optimal orders of $\ve$ for the lifespans of smooth solutions in \cite{Alinhac01a} and \cite{Alinhac01b},
we know that the conclusions in $(A_2)$ of Theorem \ref{thm1} and $(B_2)$ of Theorem \ref{thm1-cubic} for $T_{\ve}$
may not be improved.
\end{remark}

\begin{remark}\label{HC3}
The 4D compressible isentropic Euler equations are
\begin{equation}\label{Euler}
\left\{
\begin{aligned}
&\p_t\rho+div (\rho v)=0,\\
&\p_t(\rho v)+div (\rho v \otimes v)+\nabla p=0,\\
\end{aligned}
\right.
\end{equation}
which express the conservations of the mass and momentum, respectively. Where $(t,x, y)\in [0,+\infty)\times\mathbb{R}^3\times\Bbb T$, $\nabla=(\p_1,\p_2, \p_3,\p_4)=(\p_{x_1},\p_{x_2}, \p_{x_3},
\p_y)$, $v=(v_1,v_2,v_3,v_4)$, $\rho$, $p$ stand for the velocity, density, pressure
respectively. For the polytropic gases, the equation of state (see \cite{CF}) is given by
\begin{equation}\label{pr_0}
p(\rho)=A\rho^{\gamma},
\end{equation}
where $A$ and $1<\gamma<3$ are positive constants.

For the Chaplygin gases, the equation of state (see \cite{CF}) is
\begin{equation}\label{pr}
p(\rho)=P_0-\frac{B}{\rho}
\end{equation}
with $P_0$ and $B$ being positive constants.

Suppose that \eqref{Euler} admits the irrotational smooth initial data with $2\pi$-period on $y$:
$$
\big(\rho,v)(0,x,y)=(\bar\rho+\rho_0(x,y),v_1^0(x,y),v_2^0(x,y),v_3^0(x,y),v_4^0(x,y)\big),
$$
where $\bar\rho+\rho_0(x,y)>0$, and $\bar\rho$ is
a positive constant which can be normalized so that the sound speed $c(\bar\rho)=\sqrt{p'(\bar\rho)}=1$. Then by the irrotationality
of \eqref{Euler}, one can
introduce a potential function $\phi$ such that $v=\nabla\phi$, and the Bernoulli's law, $\p_t\phi+\frac{1}{2}|\nabla\phi|^2+h(\rho)=0$,
holds with the enthalpy $h(\rho)$ satisfying $h'(\rho)=\f{c^2(\rho)}\rho$ and $h(\bar\rho)=0$. Therefore, for smooth
irrotational flows, \eqref{Euler} is equivalent to that for the polytropic gases,
\begin{equation}\label{1.9-00}
\begin{split}
\p_t^2\phi-c^2(\rho)\Delta \phi+2\ds\sum_{k=1}^4\p_k\phi\p_{tk}^2\phi+\ds\sum_{i,j=1}^4\p_i\phi\p_j\phi\p_{ij}^2\phi
=0;
\end{split}
\end{equation}
for Chaplygin gases,
\begin{equation}\label{1.9-01}
\begin{split}
\p_t^2\phi-(1+2\p_t\phi)\triangle\phi+2\sum_{k=1}^4\p_k\phi\p_{tk}\phi
+\sum_{i,j=1}^4\p_i\phi\p_j\phi\p_{ij}^2\phi-|\nabla\phi|^2\triangle\phi=0.
\end{split}
\end{equation}
It is easy to check that \eqref{1.9-01}
satisfies the partial null condition in \eqref{null:def} while \eqref{1.9-00} does not.
Therefore, Theorem \ref{thm1} holds true for the 4D irrotationlal compressible Euler equations \eqref{Euler}
of Chaplygin gases and polytropic gases on $\R^3\times\T$.
\end{remark}

\begin{remark}\label{HC4}
The 3D nonlinear relativistic membrane equation is
\begin{equation}\label{HCC-1}
\p_t\Big(\ds\f{\p_t\phi}{\sqrt{1-(\p_t\phi)^2+|\nabla\phi|^2}}\Big)
-\ds\sum_{i=1}^3\p_i\Big(\ds\f{\p_i\phi}{\sqrt{1-(\p_t\phi)^2+|\nabla\phi|^2}}\Big)=0,
\end{equation}
where $(t,x, y)\in [0,+\infty)\times\mathbb{R}^2\times\Bbb T$, $\nabla=(\p_1,\p_2, \p_3)=(\p_{x_1},\p_{x_2}, \p_y)$.
For small smooth solution $\phi$, \eqref{HCC-1} is equivalent to
\begin{equation}\label{HCC-2}
\p_t^2\phi-\Delta\phi+|\nabla\phi|^2\p_t^2\phi+\Delta\phi((\p_t\phi)^2-|\nabla\phi|^2)+\ds\sum_{k=1}^3\p_k\phi
\nabla\p_k\phi\cdot\nabla\phi-2\p_t\phi\nabla\phi\cdot\nabla\p_t\phi=0.
\end{equation}
It follows from direct computation that \eqref{HCC-2} fulfills the partial null condition in Definition \ref{YH-2}.
Hence,  $(B_1)$ of Theorem \ref{thm1-cubic} can be applied to equation \eqref{HCC-1} with the initial data in \eqref{QWE-cubic}
on $\Bbb R^2\times\Bbb T$.

On the other hand, consider the following 3D nonlinear wave equation which can be regarded as a model equation for the nonlinear
version of Maxwell equations in nonlinear electromagnetic theory (see \cite{MY})
\begin{equation}\label{HCC-3}
-(1+3G''(0)(\p_t \phi)^2)\p_t^2 \phi+\Delta \phi=0,
\end{equation}
where $(t,x, y)\in [0,+\infty)\times\mathbb{R}^2\times\Bbb T$, and the constant $G''(0)\not=0$.
It is easy to know that \eqref{HCC-3} does not satisfy the partial null condition in Definition \ref{YH-2}.
Therefore, the result in $(B_2)$ of Theorem \ref{thm1-cubic} holds for \eqref{HCC-3} on $\Bbb R^2\times\Bbb T$.
Analogously, Theorem \ref{thm1-cubic} $(B_2)$ is also true for the 3D nonlinear membrane equation on $\Bbb R^2\times\Bbb T$
$$\p_t^2\phi
-\ds\sum_{i=1}^3\p_i\Big(\ds\f{\p_i\phi}{\sqrt{1+|\nabla\phi|^2}}\Big)=0.$$

\end{remark}

\vskip 0.1 true cm

The topic on the well-posedness of wave equations on product spaces arises from
the studies for the propagation of waves along infinite homogeneous waveguides (see \cite{Ett-15,PLR-03,MSS-05})
and the Kaluza-Klein theory (see \cite{Kalu}, \cite{Klein}, \cite{Witten}). It also comes from the investigation of the
global stability problem for Einstein equations with supersymmetric compactifications. In fact, by the wave-coordinates,
a system of quasilinear wave equations on the product space $\R^{d}\times\T$ can be derived as a toy model for Einstein equations
with additional compact dimension. Its natural feature is of the spatial anisotropy, which is different from the
classical theory for the hyperbolic equations on $\mathbb{R}^d$. We refer the readers to the
more introductions in \cite{HS-21} for the detailed backgrounds of wave equations on product spaces.

We point out that the study of the wave equations on the product space $\R^{d}\times\T$ is closely
related to the Klein-Gordon system in $\R^{d}$. Indeed, if taking the Fourier transformation with respect to
the variable $y$ for the nonlinear equation \eqref{QWE}, then we obtain infinite many coupled nonlinear equations in $\R^3$
as follows
\begin{equation}\label{reduction to WKG}
\begin{split}
&(\p_t^2-\Delta_x+|n|^2)u_n(t,x)=(Q(\p u,\p^2u))_n(t,x),\quad x\in \R^3, n\in\Z,
\end{split}
\end{equation}
where
\begin{equation}\label{reduction to WKG-00}
\begin{split}
&u_n(t,x):=\frac{1}{2\pi}\int_{\T}e^{-{\rm i}ny}u(t,x,y)dy.\\
\end{split}
\end{equation}
From \eqref{reduction to WKG}, one knows that the zero-mode $u_0$ is the solution of a wave equation
and other non-zero mode $u_n$ ($n\not=0$) solves a Klein-Gordon equation with mass $|n|$.
Note that the system \eqref{reduction to WKG} is composed of a wave equation and an infinite number of coupled
Klein-Gordon equations with variable masses,
which leads to essential difficulties to solve them directly. Let's review some results related to a finite number of coupling
systems of  the wave equation and Klein-Gordon equations in $\R^{1+d} (d=2,3)$ with small data. In \cite{Georgiev-90},
V.Georgiev introduced the strong null condition of the nonlinearity with the forms $\p_\alpha u\p _\beta v-\p_\alpha v\p _\beta u$
($\alpha,\beta\in \{0,1,2,3\}$) and further obtained the global well-posedness of small data solutions for
the coupled system of wave and Klen-Gordon equations in $\R^{1+3}$. Subsequently, S.Katayama \cite{Katayama12}
established the global existence under a weaker condition than the strong null condition imposed in \cite{Georgiev-90}.
Recently, the following wave-Klein-Gordon system
in $\R^{1+3}$ was studied by some authors \cite{Q-Wang-2015,IP19,LeF-Ma-16,Q-Wang-2020}:
\begin{equation}\label{Simplied-WKG-eq}
\begin{cases}
&-\Box u=A^{\alpha\beta}\p_\alpha v \p_\beta v+Dv^2,\\
&(-\Box +1)v=u B^{\alpha\beta}\p_\alpha\p_\beta v.
\end{cases}
\end{equation}
The system \eqref{Simplied-WKG-eq} was derived in \cite{LeF-Ma-16} and \cite{Q-Wang-2015}
as a simplified model for the full Einsten-Klein-Gordon system. In addition,
 the authors in \cite{LeF-Ma-16} proved the existence of global-in-time solutions to the Cauchy problem of
 \eqref{Simplied-WKG-eq} for sufficiently smooth and compactly supported initial data.
For the small smooth initial data with suitable spatial decay at infinity (not necessarily compactly supported),
the authors in \cite{IP19} established the global regularity and modified scattering of \eqref{Simplied-WKG-eq}.
Concerning the global well-posedness of small amplitude solutions of the coupled wave and Klein-Gordon equations in lower space dimensions,
we refer the readers to see \cite{Dong-JFA}-\cite{Yue-M-6}, \cite{Ifrim-Stingo} and \cite{Stingo-Memoirs}.

In the paper, motivated by the works above, we investigate the long time existence of
nonlinear wave equations on the product space $\R^{d}\times\T$ with $d=2,3$.
As far as we know, there are few results in this direction. Recently, the authors in \cite{HS-21} considered a
system of quasilinear wave equations on the product space $\R^{3}\times\T$ as follows
\begin{equation}\label{HCC-4}
\begin{cases}
&-\p_t^2u+\Delta_xu+(1+u)\p_y^2u=\ds\sum_{1\le i,j\le 2}N_1(w_i,w_j),\\
&-\p_t^2v+\Delta_xv+(1+u)\p_y^2v=\ds\sum_{1\le i,j\le 2}N_2(w_i,w_j),\\
\end{cases}
\end{equation}
where $(t,x,y)\in [0,\infty)\times\R^3\times\T$, $w_1,w_2=\{u,v\}$,  the nonlinearities $N_1(\cdot,\cdot)$ and
$N_2(\cdot,\cdot)$ are linear combinations of the following quadratic null forms
\begin{equation*}
\begin{split}
&\p_t\phi\p_t\psi-\nabla_x\phi\cdot\nabla_x\psi,\quad \p_i\phi\p_j\psi-\p_i\psi\p_j\phi,\quad 1\le i<j\le 3,\\
&\p_t\phi\p_i\psi-\p_i\phi\p_t\psi,\quad 1\le i\le 3
\end{split}
\end{equation*}
with $\phi,\psi=\{u,v\}$. For the small and regular initial data of \eqref{HCC-4} with polynomial spatial decay at infinity,
the global existence of small data solution $(u,v)$ is shown.
Compared with the result in \cite{HS-21}, we establish the global or almost global solutions of 4D fully nonlinear wave equations
on $\R^{3}\times\T$ when the general partial null condition is fulfilled or not. In addition, we also establish the global or almost global
small data solutions of 3D fully nonlinear wave equations on $\R^{2}\times\T$ with cubic nonlinearities. It is pointed out that
the methods in \cite{HS-21} and our present paper are different.
On the other hand, the interested readers are also referred to the works \cite{LTY22a,TY21} on the long time
behavior of Klein-Gordon equations on the product space $\R^{d}\times\T$ with $d=2,3$.

We now give some comments on the proofs of Theorem \ref{thm1} and Theorem \ref{thm1-cubic}.
Note that the following Klainerman-Sobolev inequality plays a key role
in proving the global existence of small data smooth solutions to the $d$-dimensional nonlinear wave equations
(see \cite{Alinhac01a,Alinhac01b,Christodoulou86,Hormander97book, Klainerman})
\begin{equation}\label{HCC-5}
|\p \phi(t,x)|\le \f{C}{(1+|t-r|)^{\f12}(1+t)^{\f{d-1}{2}}}\ds\sum_{|I|\le [\f{d}{2}]+2}\|\hat{Z}^I\p \phi(t,x)\|_{L_x^2(\Bbb R^d)},
\end{equation}
where $\hat{Z}\in\{\p, x_i\p_j-x_j\p_i, x_i\p_t+t\p_i, 1\le i,j\le d, t\partial_t+\sum_{k=1}^d x_k\p_k\}$ and $r=|x|=\sqrt{x_1^2+\cdot\cdot\cdot+x_d^2}$.
However, due to the spatial anisotropy of the product space $\Bbb R^d\times\Bbb T$, the scaling vector $S=t\partial_t+\sum_{i=1}^{d}x_i\partial_{x_i}$ does not commutate with the wave operator $\Box:=\p_t^2-\Delta_{x}-\p_y^2$ and
yet does not commutate with the resulted Klein-Gordon operators $\p_t^2-\Delta_x+|n|^2$ ($n\in \Z$) appeared in \eqref{reduction to WKG}.
This fact prevents us from using the classical Klainerman-Sobolev inequality \eqref{HCC-5} to study the global solution problems
of \eqref{QWE} and \eqref{QWE-cubic}.
To overcome this difficulty, our key ingredients are to take the pointwise estimates for the solutions of  \eqref{QWE} and \eqref{QWE-cubic}
inspired the vector fields method in \cite{Klainerman85,Klainerman}.
Next we only give the detailed explanations on the pointwise estimates of solutions
to problem \eqref{QWE} since the treatments on \eqref{QWE-cubic} are analogous.
Define the hyperbolic rotations $L_i:=x_i\p_t+t\p_i,i=1,2,3$, $L:=(L_1,L_2,L_3)$ and the space rotations $\Omega_{ij}:=x_i\p_j-x_j\p_i,i,j=1,2,3$, $\Omega:=(\Omega_{23},\Omega_{31},\Omega_{12})$. Set $\p_{t,x}:=(\p_0,\p_1,\p_2,\p_3)$ and $Z:=\{Z_1,\cdots,Z_{11}\}=\{\p_{t,x},L,\Omega,\p_y\}$.
The procedure on the pointwise estimates of $Z^au$ ($a\in\Bbb N_0^{11}$) will be divided into two parts to
treat the non-zero modes $n\neq0$ and the zero-mode $n=0$ respectively.
For notational convenience, the following projections of  $f(t,x,y)$ are defined:
\begin{equation}\label{proj:def}
P_{=0}f(t,x,y):=\frac{1}{2\pi}\int_{\T}f(t,x,y')dy'=f_0(t,x),\quad P_{\neq0}:={\rm Id}-P_{=0}.
\end{equation}
With respect to the non-zero mode $(Z^au)_n(t,x)$, by virtue of the $L^\infty$-$L^2$ estimates on the Klein-Gordon
equation \eqref{reduction to WKG} with variable mass (see \cite{Georgiev92}), we can obtain
\begin{equation}\label{ansatz:nonzero}
\w{t+|x|}^{1.4}|(Z^au)_n(t,x)|\le C|n|^{|a|+7-N}\ve,\quad|a|\le N-8,n\in\Z_*:=\Z\setminus\{0\}.
\end{equation}
With respect to the zero mode $(P_{=0} Z^au)(t,x)$, we will introduce the following auxiliary energy
to estimate it
\begin{equation}\label{aux:energy:def}
\cX_k(t):=\sum_{|a|\le k-1}\Big(\|\w{t-|x|}P_{=0}\p^2Z^au\|_{L^2(\R^3)}
+\|\w{t+|x|}P_{=0}\bar\p\p Z^au\|_{L^2(\R^3:|x|\ge \w{t}/3)}\Big),
\end{equation}
where $\bar\p=(\bar\p_1,\bar\p_2,\bar\p_3)$ and $\bar\p_i:=\p_i+\frac{x_i}{|x|}\p_t$, $i=1,2,3$.
The introduction of $\cX_k(t)$ is due to such two reasons: the weight $\w{t-|x|}$ in the first term of \eqref{aux:energy:def}
will provide the required temporal decay rate away from the outgoing light conic surface $|x|=t$;
the appearances of the weight $\w{t+|x|}$ and the good derivative $\bar\p$ in the second term of \eqref{aux:energy:def}
will produce a better time-decay rate $\w{t}^{-3/2}$ of
$(P_{=0} Z^au)(t,x)$ near the conic surface $|x|=t$ when the null condition of related nonlinearity is fulfilled
or produce at least the time-decay of $\w{t}^{-1}$ for $(P_{=0} Z^au)(t,x)$ when the null condition is violated.
Based on this together with \eqref{ansatz:nonzero}, Theorem \ref{thm1} can be proved by the continuous argument.
Analogously, the proof of Theorem \ref{thm1-cubic} can be completed.

\vskip 0.1 true cm

Notations:
\begin{itemize}
  \item $\w{x}:=\sqrt{1+|x|^2}$.
  \item $\N_0:=\{0,1,2,\cdots\}$, $\Z:=\{0,\pm1,\pm2,\cdots\}$ and $\Z_*:=\Z\setminus\{0\}$.
  \item On $\R^d\times\T$ with $d=2,3$, set $\p_0:=\p_t$, $\p_i:=\p_{x_i}$, $i=1,\cdot\cdot\cdot,d$,
  $\p_{d+1}:=\p_y$, $\p_x:=\nabla_x=(\p_1,\cdot\cdot\cdot,\p_d)$, $\p_{t,x}:=(\p_0,\p_1,\cdot\cdot\cdot,\p_d)$, $\p:=(\p_t,\p_1,\cdot\cdot\cdot,\p_d,\p_y)$ and $\Box_{t,x}:=\p_t^2-\Delta_x=\p_t^2-\sum_{i=1}^{d}\p_i^2$.
  \item On $\R^d\times\T$ with $d=2,3$, for $|x|>0$, define $\bar\p_i:=\p_i+\frac{x_i}{|x|}\p_t$, $i=1,\cdot\cdot\cdot,d$ and $\bar\p=(\bar\p_1,\cdot\cdot\cdot,\bar\p_d)$.
  \item The following vector fields on $\R^d\times\T$ with $d=2,3$ are defined: $L_i:=x_i\p_t+t\p_i,i=1,\cdot\cdot\cdot,d$, $L:=(L_i)_{i=1,\cdot\cdot\cdot,d}$, $\Omega_{ij}:=x_i\p_j-x_j\p_i,1\leq i<j\leq d$, $\Omega:=(\Omega_{ij})_{1\leq i<j\leq d}$.

       On $\R^3\times\T$, $\Gamma=\{\Gamma_1,\cdot\cdot\cdot,\Gamma_{10}\}=:\{\p_{t,x},(L_i)_{1\leq i\leq3},(\Omega_{ij})_{1\leq i<j\leq 3}\}$ and $Z=\{\Gamma_1,\cdot\cdot\cdot,\Gamma_{11}\}=:\{\Gamma,\p_y\}$.

       On $\R^2\times\T$, $\Gamma=\{\Gamma_1,\cdots,\Gamma_{6}\}=:\{\p_{t,x},L_1,L_2,\Omega_{12}\}$ and $Z=\{Z_1,\cdots,Z_{7}\}=:\{\Gamma,\p_y\}$.
  \item
      On $\R^3\times\T$, $\Gamma^a:=\Gamma_1^{a_1}\Gamma_2^{a_2}\cdots\Gamma_{10}^{a_{10}}$ for $a\in\N_0^{10}$
      and $Z^a:=Z_1^{a_1}Z_2^{a_2}\cdots Z_{11}^{a_{11}}$
      for $a\in\N_0^{11}$. On $\R^2\times\T$, $\Gamma^a:=\Gamma_1^{a_1}\Gamma_2^{a_2}\cdots\Gamma_{6}^{a_{6}}$ for $a\in\N_0^{6}$
      and $Z^a:=Z_1^{a_1}Z_2^{a_2}\cdots Z_{7}^{a_{7}}$ for $a\in\N_0^7$.
  \item $\|\cdot\|_{L^p_x}:=\|\cdot\|_{L^p(\R^d)}$, $\|\cdot\|_{L^p_y}:=\|\cdot\|_{L^p(\T)}$ and $\|\cdot\|_{L^p_{x,y}}:=\|\cdot\|_{L^p(\R^d\times\T)}$ for $d=2,3$, respectively.
  \item Auxiliary $k$-order energy on $\R^d\times\T$ with $d=2,3$:
  \begin{equation*}
   \cX_k(t):=\sum_{|a|\le k-1}\Big(\|\w{t-|x|}P_{=0}\p^2Z^au\|_{L^2(\R^d)}+\|\w{t+|x|}P_{=0}\bar\p\p Z^au\|_{L^2(\R^d:|x|\ge \w{t}/3)}\Big),
  \end{equation*}
  \item $\ds f_n(t,x):=\frac{1}{2\pi}\int_{\T}e^{-{\rm i}ny}f(t,x,y)dy$, ${\rm i}:=\sqrt{-1}$, $n\in\Z$, $x\in \R^d$ with $d=2,3$.
  \item $A\ls B$ means $A\le CB$ for a generic constant $C>0$.
  \item For $\cP=\Omega$ or $\p$, denote $\ds|\cP^{\le j}f|:=\Big(\sum_{0\le|a|\le j}|\cP^af|^2\Big)^\frac12$ and $\ds|\cP^{\le j}f\cP^{\le k}g|:=\Big(\sum_{\substack{0\le|a|\le j,\\0\le|b|\le k}}|\cP^af\cP^bg|^2\Big)^\frac12$.
\end{itemize}

\section{Preliminaries and bootstrap assumptions}\label{sect2}
\subsection{Some basic lemmas}\label{sect2-1}

At first, we show some properties on the  orthogonality and commutation of the projection operators $P_{=0}$
or $P_{\neq0}$.
\begin{lemma}\label{lem:proj}
For any real valued functions $f(t,x,y)$ and $g(t,x,y)$, it holds that
\begin{equation*}
\begin{split}
&\int_{\T}P_{=0}fP_{\neq0}gdy=0,\quad
\|f\|^2_{L_y^2}=\|P_{=0}f\|^2_{L_y^2}+\|P_{\neq0}f\|^2_{L_y^2},\\
&P_{=0}Zf=ZP_{=0}f,\quad P_{\neq0}Zf=ZP_{\neq0}f,\quad Z=\{\p,L,\Omega\},\\
&P_{=0}\p_yf=\p_yP_{=0}f=0,\quad(\Gamma f)_n=\Gamma(f_n),\quad \Gamma=\{\p_{t,x},L,\Omega\}.\\
\end{split}
\end{equation*}
\end{lemma}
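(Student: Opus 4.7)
The plan is to establish each of the four assertions by direct computation from the definition $P_{=0}f(t,x,y) = \frac{1}{2\pi}\int_\T f(t,x,y')\,dy'$, together with the structural fact that every vector field in $\Gamma = \{\p_{t,x}, L, \Omega\}$ carries no $\p_y$ component and has coefficients independent of $y$.

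For the orthogonality identity, I would note that $P_{=0}f$ is $y$-independent, so it factors out of the $y$-integral, reducing the left-hand side to $(P_{=0}f)\cdot\int_\T P_{\neq0}g\,dy = 2\pi(P_{=0}f)\cdot P_{=0}(P_{\neq0}g)$. Because $P_{=0}$ is an idempotent averaging projection and $P_{\neq0} = \mathrm{Id}-P_{=0}$, the composition $P_{=0}\circ P_{\neq0}$ vanishes identically, so the integral is zero. The Pythagorean identity then follows immediately: write $f = P_{=0}f+P_{\neq0}f$, expand $\|f\|_{L^2_y}^2$, and use the orthogonality just proved to kill the cross term.

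For the commutation relations, the essential point is that each $\Gamma_j$ is of the form $a^\mu(t,x)\p_\mu$ with $\mu \neq y$, so differentiation under the integral sign yields $P_{=0}(\Gamma_j f) = \frac{1}{2\pi}\int_\T \Gamma_j f(\cdot,y')\,dy' = \Gamma_j\bigl(\frac{1}{2\pi}\int_\T f(\cdot,y')\,dy'\bigr) = \Gamma_j P_{=0}f$; subtracting this from $\Gamma_j f$ gives the analogous identity for $P_{\neq0}$. The very same reasoning, with the further $y$-constant factor $e^{-\mathrm{i}ny}$ inserted, produces $(\Gamma_j f)_n = \Gamma_j(f_n)$. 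The case $Z = \p_y$ is separate and trivial: $P_{=0}\p_y f = \frac{1}{2\pi}\int_\T \p_{y'}f(\cdot,y')\,dy' = 0$ by periodicity in $y'$, while $\p_y P_{=0}f = 0$ since $P_{=0}f$ does not depend on $y$, so both sides vanish and the commutation $P_{=0}\p_y = \p_y P_{=0}$ holds tautologically.

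I do not foresee any genuine obstacle; the lemma is a collection of commutation identities whose verification is elementary. The only point warranting care is to confirm from the Notations section that $L_i = x_i\p_t + t\p_i$ and $\Omega_{ij} = x_i\p_j - x_j\p_i$ indeed have coefficients depending only on $(t,x)$ and never differentiate in $y$, which justifies the interchange of $\Gamma_j$ with both $\int_\T\cdot\,dy'$ and the Fourier weight $e^{-\mathrm{i}ny}$.
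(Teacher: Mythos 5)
Your proof is correct and fills in exactly the kind of direct verification the paper itself omits (the paper's proof reads, in full, ``These properties can be directly verified, we omit the details here''). Each step — factoring out the $y$-independent $P_{=0}f$, using $P_{=0}\circ P_{\neq0}=0$, differentiating under the integral sign for the $y$-independent vector fields, and the trivial $\p_y$ case — is sound and complete.
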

\begin{proof}
These properties can be directly verified, we omit the details here.
\end{proof}

The following two lemmas mean that the vector fields $Z^a$ commute with the wave operator $\Box$ and  the
corresponding partial null conditions are still preserved.
\begin{lemma}
Suppose that $u$ is a solution to \eqref{QWE}.
Then for any multi-index $a$, $Z^au$ satisfies
\begin{equation}\label{eqn:high}
\begin{split}
\Box Z^au&=Q^a:=Z^aQ(\p u,\p^2u)=\sum_{\alpha,\beta,\gamma,\delta=0}^4\sum_{b+c\le a}
F_{abc}^{\alpha\beta\gamma\delta}\p^2_{\alpha\beta}Z^bu\p^2_{\gamma\delta}Z^cu\\
&\qquad+\sum_{\alpha,\beta,\gamma=0}^4\sum_{b+c\le a}
Q_{abc}^{\alpha\beta\gamma}\p^2_{\alpha\beta}Z^bu\p_\gamma Z^cu
+\sum_{\alpha,\beta=0}^4\sum_{b+c\le a}S_{abc}^{\alpha\beta}\p_\alpha Z^bu\p_\beta Z^cu,
\end{split}
\end{equation}
where $F_{abc}^{\alpha\beta\gamma\delta}$, $Q_{abc}^{\alpha\beta\gamma}$ and $S_{abc}^{\alpha\beta}$ are constants,
 in particular, $F_{aa0}^{\alpha\beta\gamma\delta}=F_{a0a}^{\alpha\beta\gamma\delta}=F^{\alpha\beta\gamma\delta}$,  $Q_{aa0}^{\alpha\beta\gamma}=Q^{\alpha\beta\gamma}$.
Furthermore, if the partial null condition \eqref{null:def} holds, then for any $\xi=(\xi_0,\xi_1,\xi_2,\xi_3)\in\{\pm1,\SS^2\}$,
\begin{equation}\label{null:high}
\sum_{\alpha,\beta,\gamma,\delta=0}^3F_{abc}^{\alpha\beta\gamma\delta}\xi_\alpha\xi_\beta\xi_\gamma\xi_\delta\equiv0,
\quad\sum_{\alpha,\beta,\gamma=0}^3Q_{abc}^{\alpha\beta\gamma}\xi_\alpha\xi_\beta\xi_\gamma\equiv0,
\quad\sum_{\alpha,\beta=0}^3S_{abc}^{\alpha\beta}\xi_\alpha\xi_\beta\equiv0.
\end{equation}
\end{lemma}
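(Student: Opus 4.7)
The plan is to prove \eqref{eqn:high} by induction on $|a|$ and then verify \eqref{null:high} by tracking how the symbol coefficients transform under the commutators generated in the induction.

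First, observe that every $Z\in\{\p_\alpha,L_i,\Omega_{ij},\p_y\}$ commutes with $\Box=\p_t^2-\Delta_x-\p_y^2$: the translations $\p_\alpha$ and $\p_y$ trivially, the hyperbolic rotations $L_i$ by the classical identity $[\Box,L_i]=0$, and the spatial rotations $\Omega_{ij}$ because they act only on $x$ and preserve both $\Delta_x$ and $\p_y^2$. Hence $\Box Z^a u=Z^a\Box u=Z^a Q(\p u,\p^2u)$, and the remaining task is to expand $Z^a$ on the nonlinearity.

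For the inductive step, write $Z^a=Z\cdot Z^{a-e_j}$, apply the induction hypothesis, distribute $Z$ through the Leibniz rule, and then move $Z$ past $\p_\alpha$ or $\p^2_{\alpha\beta}$ using the commutator identities
$$
[L_i,\p_0]=-\p_i,\qquad[L_i,\p_j]=-\delta_{ij}\p_0,\qquad[\Omega_{ij},\p_k]=\delta_{jk}\p_i-\delta_{ik}\p_j,\qquad[\p_y,\Gamma]=0,
$$
together with the standard closure relations among $\{\Gamma,\p\}$ which guarantee that products of $Z_j$'s re-expand into linear combinations of $Z^{a'}$. The key structural point is that these commutators produce only $\p_\alpha$ with $\alpha\in\{0,1,\dots,d\}$ and never mix into the $y$-direction, so the result retains precisely the schematic form displayed in \eqref{eqn:high}. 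The special identities $F_{aa0}^{\alpha\beta\gamma\delta}=F_{a0a}^{\alpha\beta\gamma\delta}=F^{\alpha\beta\gamma\delta}$ and $Q_{aa0}^{\alpha\beta\gamma}=Q^{\alpha\beta\gamma}$ follow by isolating the top-order terms in the Leibniz expansion: they correspond to placing every copy of $Z$ on one single factor and commuting it inside past the outermost $\p^2$ or $\p$, since the commutators only contribute to strictly lower-order terms $F_{abc}^{\alpha\beta\gamma\delta}$ with $|b|+|c|<|a|$.

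For the preservation \eqref{null:high}, the observation is that each step of the induction replaces a symbol factor $\xi_\alpha$ by a linear combination of $\xi_0,\xi_1,\dots,\xi_d$; because $[\Gamma,\p_y]=0$ for all $\Gamma\in\{\p_{t,x},L,\Omega\}$, the index $d+1$ (i.e.\ $\p_y$) is never touched. On the slice $\xi\in\{\pm1,\SS^{d-1}\}$, the commutators with $L_i$ and $\Omega_{ij}$ act on $(\xi_0,\xi_1,\xi_2,\xi_3)$ (resp.\ $(\xi_0,\xi_1,\xi_2)$) as infinitesimal Lorentz rotations, which preserve the null cone $\xi_0^2=|\xi'|^2$ and hence its normalized slice. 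Since the original symbols in \eqref{null:def} vanish on this slice, so do all their finite linear-combination descendants $F_{abc},Q_{abc},S_{abc}$. The hardest part of the bookkeeping — enumerating all commutator terms produced by deep iterations — is disposed of once and for all by this symbol-level Lorentz-invariance remark, without needing to write out the explicit combinatorial expansion.
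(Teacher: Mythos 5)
Your proof is correct and proceeds by essentially the same route as the paper: a Leibniz/commutator expansion of $Z^a$ acting on $Q(\p u,\p^2 u)$ for the structural identity \eqref{eqn:high}, and for \eqref{null:high} the observations that $[\p_y,\Gamma]=0$ (so the $y$-index is never mixed into $\{0,1,2,3\}$) and that the induced symbol-level action of $L_i,\Omega_{ij}$ is tangent to the light cone, which is exactly the content of Lemma~6.6.5 in H\"ormander that the paper cites. One minor imprecision: the boosts $L_i$ do not preserve the normalized slice $\xi_0^2=|\xi'|^2=1$ itself, only the full cone $\xi_0^2=|\xi'|^2$; since the relevant symbols are homogeneous polynomials, vanishing on the cone and on the slice are equivalent, so your conclusion stands, but the justification should invoke homogeneity rather than preservation of the slice.
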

\begin{proof}
\eqref{eqn:high} can be obtained by direct verification.
\eqref{null:high} comes from Lemma 6.6.5 of \cite{Hormander97book}, $\p_y\Gamma=\Gamma\p_y$ with $\Gamma=\{\p_{t,x},L,\Omega\}$
and direct computation.
\end{proof}

In the same way, we have:
\begin{lemma}
Suppose that $u$ is a solution to \eqref{QWE-cubic}.
Then $Z^au$ satisfies
\begin{equation}\label{eqn:high-cubic}
\begin{split}
\Box Z^au&=\mathcal{C}^a:=Z^a \mathcal{C}(\p u,\p^2u)=\sum_{\alpha,\beta,\gamma,\delta,\mu,\nu=0}^3\sum_{b+c+d\le a}
F^{\alpha\beta\gamma\delta\mu\nu}_{abcd}\p^2_{\alpha\beta}Z^b u\p^2_{\gamma\delta}Z^c u\p^2_{\mu\nu}Z^d u\\
&+\sum_{\alpha,\beta,\gamma,\delta,\mu=0}^3\sum_{b+c+d\le a} G^{\alpha\beta\gamma\delta\mu}_{abcd}\p^2_{\alpha\beta}Z^b u\p^2_{\gamma\delta}Z^c u\p_{\mu}Z^d u\\
&+\sum_{\alpha,\beta,\gamma,\delta=0}^3 \sum_{b+c+d\le a}H^{\alpha\beta\gamma\delta}_{abcd}\p^2_{\alpha\beta}Z^b u\p_\gamma Z^cu\p_\delta Z^du
+\sum_{\alpha,\beta,\gamma=0}^3 \sum_{b+c+d\le a}S^{\alpha\beta\gamma}_{abcd}\p_{\alpha}Z^b u\p_\beta Z^c u\p_\gamma Z^du,
\end{split}
\end{equation}
where $F^{\alpha\beta\gamma\delta\mu\nu}_{abcd}$, $G^{\alpha\beta\gamma\delta\mu}_{abcd}$, $H^{\alpha\beta\gamma\delta}_{abcd}$ and $S^{\alpha\beta\gamma}_{abcd}$
are constants, moreover, $F^{\alpha\beta\gamma\delta\mu\nu}_{aa00}=F^{\alpha\beta\gamma\delta\mu\nu}_{a0a0}=F^{\alpha\beta\gamma\delta\mu\nu}_{a00a}=F^{\alpha\beta\gamma\delta\mu\nu}$,  $G^{\alpha\beta\gamma\delta\mu}_{aa00}=G^{\alpha\beta\gamma\delta\mu}_{a0a0}=G^{\alpha\beta\gamma\delta\mu}$, $H^{\alpha\beta\gamma\delta}_{aa00}=H^{\alpha\beta\gamma\delta}$.
In addition, if the partial null condition \eqref{null:def-cubic} holds, then for any $\xi=(\xi_0,\xi_1,\xi_2)\in\{\pm1,\SS^1\}$,
\begin{equation}\label{null:high-cubic}
\begin{split}
&\sum_{\alpha,\beta,\gamma,\delta,\mu,\nu=0}^2 F^{\alpha\beta\gamma\delta\mu\nu}_{abcd}\xi_\alpha\xi_\beta\xi_\gamma\xi_\delta\xi_\mu\xi_\nu\equiv0,
\quad\sum_{\alpha,\beta,\gamma,\delta,\mu=0}^2 G^{\alpha\beta\gamma\delta\mu}_{abcd}\xi_\alpha\xi_\beta\xi_\gamma\xi_\delta\xi_\mu\equiv0,\\
&\sum_{\alpha,\beta,\gamma,\delta=0}^2 H^{\alpha\beta\gamma\delta}_{abcd}\xi_\alpha\xi_\beta\xi_\gamma\xi_\delta \equiv0,
\quad\sum_{\alpha,\beta,\gamma=0}^2 S^{\alpha\beta\gamma}_{abcd}\xi_\alpha\xi_\beta\xi_\gamma \equiv0.
\end{split}
\end{equation}
\end{lemma}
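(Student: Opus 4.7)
The plan is to mirror the proof sketched in the preceding (quadratic) lemma, adapted to the cubic nonlinearity. First I would verify that $\Box$ commutes with each element of $Z=\{\p_{t,x},L_i,\Omega_{ij},\p_y\}$: the standard computations $[\Box,\p_{t,x}]=0$, $[\Box,L_i]=0$, $[\Box,\Omega_{ij}]=0$ on Minkowski space extend trivially here since $\p_y^2$ commutes with each of these fields and $\Box=(\p_t^2-\Delta_x)-\p_y^2$. Consequently $\Box Z^a u = Z^a\Box u = Z^a\mathcal{C}(\p u,\p^2 u)$, and it remains to expand the right-hand side into the claimed form.

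Next I would apply the multiindex Leibniz rule to each triple product in $\mathcal{C}(\p u,\p^2 u)$:
\[
Z^a(fgh)=\sum_{b+c+d=a}\binom{a}{b,c,d}Z^b f\cdot Z^c g\cdot Z^d h,
\]
and then, in each factor, commute $Z^b$ past $\p_\alpha$ or $\p_{\alpha\beta}^2$. The relevant commutators $[\p_\alpha,\Gamma_j]$ for $\Gamma_j\in\{\p_{t,x},L,\Omega\}$ are either zero or equal to $\pm\p_\gamma$ with constant coefficients, and $[\p_\alpha,\p_y]=0$. Iterating gives identities of the shape $Z^b\p_\alpha u=\sum_{|b'|\le|b|}c^{\gamma}_{b,b'}\p_\gamma Z^{b'}u$ and $Z^b\p^2_{\alpha\beta}u=\sum_{|b'|\le|b|}c^{\gamma\delta}_{b,b'}\p^2_{\gamma\delta}Z^{b'}u$ with constant $c^\cdot_{b,b'}$. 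Substituting and relabeling indices produces exactly the structure in \eqref{eqn:high-cubic}. The boundary identifications $F^{\alpha\beta\gamma\delta\mu\nu}_{aa00}=F^{\alpha\beta\gamma\delta\mu\nu}_{a0a0}=F^{\alpha\beta\gamma\delta\mu\nu}_{a00a}=F^{\alpha\beta\gamma\delta\mu\nu}$, and similarly for $G$ and $H$, correspond to the contributions in which $Z^a$ falls entirely on a single factor with no commutator correction: here the multinomial coefficient is $1$ and the diagonal term $b'=b$ in the commutator expansion contributes with coefficient $1$, reproducing the original constant. The symmetry between the three slots ($aa00$, $a0a0$, $a00a$ for $F$; $aa00$, $a0a0$ for $G$) is just a reindexing reflection of this observation.

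For the null condition preservation \eqref{null:high-cubic} on the first $d+1=3$ indices, I would invoke Lemma 6.6.5 of Hörmander \cite{Hormander97book}: the vanishing of a fully symmetric tensor on the light cone $\xi_0^2=\xi_1^2+\xi_2^2$ is preserved under the linear maps on tensors induced by commutators with $\Gamma\in\{\p_{t,x},L,\Omega\}$. Since every new coefficient $F^{\cdot}_{abcd},G^{\cdot}_{abcd},H^{\cdot}_{abcd},S^{\cdot}_{abcd}$ arises from iterated commutators $[\p_\alpha,\Gamma]=\pm\p_\beta$ applied to the original constants $F,G,H,S$ (in the indices $\alpha,\beta,\ldots\in\{0,1,2\}$), an induction on $|a|$ gives the null identities. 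The $\p_y$ factors in $Z^a$ play no role in this check because $\p_y\Gamma=\Gamma\p_y$ and $\p_y$ does not interact with any index in $\{0,1,2\}$; this is why the null condition in Definition \ref{YH-2} imposes no restriction in the $y$-direction yet propagates cleanly under $Z^a$.

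The main (and essentially only) obstacle is purely bookkeeping: organizing the combinatorial multinomial factors together with the commutator constants produced by each $[\p_\alpha,\Gamma_j]$ as they are iterated through the Leibniz expansion of a triple product. Since no analytic estimates are needed and the algebraic ingredients (Leibniz, commutator relations, Hörmander's Lemma 6.6.5) are standard, this verification is routine; the substantive structural content is the null-preservation step, which is already reduced to a direct citation.
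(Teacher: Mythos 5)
Your proof is correct and follows essentially the same route as the paper, which dispatches the quadratic analogue by "direct verification" for \eqref{eqn:high} and by citing Hörmander's Lemma 6.6.5 together with $\p_y\Gamma=\Gamma\p_y$ for the null-structure preservation, and then presents the cubic version as an identical computation. You simply spell out the bookkeeping (commutation of $\Box$ with $Z$, the multi-index Leibniz rule for the triple product, and iterated $[\p_\alpha,\Gamma]$ commutators producing the $b+c+d\le a$ sum and the top-order coefficient identifications) that the paper leaves implicit.
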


The following estimates on the null condition structures will play important roles in establishing
the global classical solutions of problem \eqref{QWE} and problem \eqref{QWE-cubic}, respectively.
\begin{lemma}\label{lem:null}
Suppose that the constants $N_1^{\alpha\beta}$, $N_2^{\alpha\beta\gamma}$ and $N_3^{\alpha\beta\gamma\delta}$ satisfy
that for any $\xi=(\xi_0,\xi_1,\xi_2,\xi_3)\in\{\pm1,\SS^2\}$,
\begin{equation*}
\sum_{\alpha,\beta=0}^3N_1^{\alpha\beta}\xi_\alpha\xi_\beta\equiv0,
\quad\sum_{\alpha,\beta,\gamma=0}^3N_2^{\alpha\beta\gamma}\xi_\alpha\xi_\beta\xi_\gamma\equiv0,
\quad\sum_{\alpha,\beta,\gamma,\delta=0}^3N_3^{\alpha\beta\gamma\delta}
\xi_\alpha\xi_\beta\xi_\gamma\xi_\delta\equiv0.
\end{equation*}
Then for smooth functions $v,w$ and $|x|>0$, it holds that
\begin{equation}\label{null:structure}
\begin{split}
\Big|\sum_{\alpha,\beta=0}^3N_1^{\alpha\beta}\p_\alpha v\p_\beta w\Big|
&\ls|\bar\p v||\p w|+|\p v||\bar\p w|,\\
\Big|\sum_{\alpha,\beta,\gamma=0}^3N_2^{\alpha\beta\gamma}\p^2_{\alpha\beta}v\p_\gamma w\Big|
&\ls|\bar\p\p v||\p w|+|\p^2 v||\bar\p w|,\\
\Big|\sum_{\alpha,\beta,\gamma,\delta=0}^3N_3^{\alpha\beta\gamma\delta}
\p^2_{\alpha\beta}v\p^2_{\gamma\delta}w\Big|&\ls|\bar\p\p v||\p^2w|+|\p^2v||\bar\p\p w|,
\end{split}
\end{equation}
where $\ds|\bar\p v|:=\Big(\sum_{i=1}^3|\bar\p_iv|^2\Big)^{1/2}$.
\end{lemma}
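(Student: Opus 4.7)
The plan is to exploit the null conditions by evaluating them on the outgoing radial null covector $\omega=(\omega_0,\omega_1,\omega_2,\omega_3):=(-1,\,x_1/|x|,\,x_2/|x|,\,x_3/|x|)$, which lies in $\{\pm1,\SS^2\}$ whenever $|x|>0$. The hypotheses immediately give
\begin{equation*}
\sum N_1^{\alpha\beta}\omega_\alpha\omega_\beta=\sum N_2^{\alpha\beta\gamma}\omega_\alpha\omega_\beta\omega_\gamma=\sum N_3^{\alpha\beta\gamma\delta}\omega_\alpha\omega_\beta\omega_\gamma\omega_\delta=0,
\end{equation*}
which is precisely the cancellation needed to annihilate the unique ``fully bad'' monomial that emerges after rewriting every derivative in a null frame.

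The core tool is the operator identity
\begin{equation*}
\p_\alpha=-\omega_\alpha\p_t+\bar\p_\alpha,\qquad \alpha=0,1,2,3,
\end{equation*}
where I set $\bar\p_0:=0$ so that the identity is valid also for $\alpha=0$. Applied twice (using $\p_t\p_\beta=\p_\beta\p_t$ along the way), it yields
\begin{equation*}
\p^2_{\alpha\beta}v=\omega_\alpha\omega_\beta\,\p_t^2v-\omega_\alpha\,\bar\p_\beta\p_tv+\bar\p_\alpha\p_\beta v,
\end{equation*}
whose last two terms are pointwise bounded by $|\bar\p\p v|$.

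With these two ingredients in hand, all three estimates of \eqref{null:structure} follow by direct substitution. For the first, I would expand $\sum N_1^{\alpha\beta}\p_\alpha v\,\p_\beta w$: the coefficient of $\p_tv\,\p_tw$ equals $\sum N_1^{\alpha\beta}\omega_\alpha\omega_\beta=0$, and each of the remaining monomials carries a $\bar\p_\alpha v$ or $\bar\p_\beta w$ factor. For the second, I would insert the above decomposition of $\p^2_{\alpha\beta}v$ together with $\p_\gamma w=-\omega_\gamma\p_tw+\bar\p_\gamma w$; the only monomial without a $\bar\p$ on either side is $-\bigl(\sum N_2^{\alpha\beta\gamma}\omega_\alpha\omega_\beta\omega_\gamma\bigr)\p_t^2v\,\p_tw=0$, while every other term carries a $\bar\p\p v$ or a $\bar\p w$ factor. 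The third inequality is handled identically, decomposing both $\p^2_{\alpha\beta}v$ and $\p^2_{\gamma\delta}w$ and using the quartic null relation to kill $\sum N_3^{\alpha\beta\gamma\delta}\omega_\alpha\omega_\beta\omega_\gamma\omega_\delta\,\p_t^2v\,\p_t^2w$.

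The one delicate point, which dictates why I work at the operator level throughout, is the avoidance of spurious factors of $|x|^{-1}$. A naive scheme that first substitutes $\p_\beta v=-\omega_\beta\p_tv+\bar\p_\beta v$ and then applies $\p_\alpha$ by the Leibniz rule would produce $(\p_\alpha\omega_\beta)\p_tv$, which has size $|x|^{-1}|\p v|$ and is \emph{not} controlled by the right-hand side of \eqref{null:structure}. Employing instead the algebraic operator identity $\p_\alpha\p_\beta v=\bar\p_\alpha\p_\beta v-\omega_\alpha\p_\beta\p_tv$, followed by the analogous decomposition of $\p_\beta\p_tv$, never differentiates $\omega$ and so bypasses this issue. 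Apart from this observation the argument is pure index bookkeeping.
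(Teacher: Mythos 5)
Your proof is correct and uses the standard approach for null-form estimates: decomposing $\p_\alpha = -\omega_\alpha\p_t + \bar\p_\alpha$ with $\omega = (-1, x/|x|)\in\{\pm1,\SS^2\}$ (and $\bar\p_0 := 0$), applied at the operator level so that $\omega$ is never differentiated, and then invoking the null condition at $\omega$ to annihilate the unique all-$\p_t$ monomial, with every surviving term carrying a $\bar\p$ factor on one of the arguments. The paper itself provides no proof here, instead citing Lemma~2.2 of \cite{HouYin20jde}, which proceeds in exactly this manner; your flagged subtlety about working at the operator level (rather than first rewriting the inner derivative and then applying Leibniz, which would produce uncontrolled $\p\omega\sim|x|^{-1}$ terms) is the one genuinely delicate point and you handle it correctly.
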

\begin{lemma}\label{lem:null-cubic}
Suppose that the constants $N_1^{\alpha\beta\gamma\delta\mu\nu}$, $N_2^{\alpha\beta\gamma\delta\mu}$, $N_3^{\alpha\beta\gamma\delta}$
and $N_4^{\alpha\beta\gamma}$ satisfy that for any $\xi=(\xi_0,\xi_1,\xi_2,\xi_3)\in\{\pm1,\SS^2\}$,
\begin{equation*}
\begin{split}
&\sum_{\alpha,\beta,\gamma,\delta,\mu,\nu=0}^2 N_1^{\alpha\beta\gamma\delta\mu\nu}\xi_\alpha\xi_\beta\xi_\gamma\xi_\delta\xi_\mu\xi_\nu\equiv0,
\quad\sum_{\alpha,\beta,\gamma,\delta,\mu=0}^2N_2^{\alpha\beta\gamma\delta\mu}\xi_\alpha\xi_\beta\xi_\gamma\xi_\delta\xi_\mu\equiv0,\\
&\sum_{\alpha,\beta,\gamma,\delta=0}^2 N_3^{\alpha\beta\gamma\delta}\xi_\alpha\xi_\beta\xi_\gamma\xi_\delta\equiv0,
\quad\sum_{\alpha,\beta,\gamma=0}^2 N_4^{\alpha\beta\gamma}\xi_\alpha\xi_\beta\xi_\gamma\equiv0.
\end{split}
\end{equation*}
Then for smooth functions $h, v, w$ and $|x|>0$, it holds that
\begin{equation}\label{null:structure-cubic}
\begin{split}
\Big|\sum_{\alpha,\beta,\gamma,\delta,\mu,\nu=0}^2N_1^{\alpha\beta\gamma\delta\mu\nu}
\p^2_{\alpha\beta}h\p^2_{\gamma\delta}v\p^2_{\mu\nu}w\Big|
&\ls|\bar\p\p h||\p^2v||\p^2w|+|\p^2h||\bar\p\p v||\p^2w|+|\p^2h||\p^2v||\bar\p\p w|,\\
\Big|\sum_{\alpha,\beta,\gamma,\delta,\mu=0}^2N_2^{\alpha\beta\gamma\delta\mu}
\p^2_{\alpha\beta}h \p^2_{\gamma\delta}v\p_{\mu}w\Big|
&\ls|\bar\p\p h||\p^2v||\p w|+|\p^2h||\bar\p\p v||\p w|+|\p^2h||\p^2v||\bar\p w|,\\
\Big|\sum_{\alpha,\beta,\gamma,\delta=0}^2N_3^{\alpha\beta\gamma\delta}
\p^2_{\alpha\beta}h \p_{\gamma}v\p_{\delta}w\Big|
&\ls|\bar\p\p h||\p v||\p w|+|\p^2h||\bar\p v||\p w|+|\p^2h||\p v||\bar\p w|,\\
\Big|\sum_{\alpha,\beta,\gamma=0}^2N_4^{\alpha\beta\gamma}
\p_{\alpha}h\p_{\beta}v\p_{\gamma}w\Big|
&\ls|\bar\p h||\p v||\p w|+|\p h||\bar\p v||\p w|+|\p h||\p v||\bar\p w|
\end{split}
\end{equation}
where $\ds|\bar\p h|:=\Big(\sum_{i=1}^2|\bar\p_ih|^2\Big)^{1/2}$.
\end{lemma}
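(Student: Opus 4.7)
The plan is to decompose every coordinate derivative into a tangential "good" part $\bar\p$ and a "bad" radial component along $\p_t$, then use the null hypotheses on $N_1,\ldots,N_4$ to cancel the leading bad contribution. Fix $|x|>0$ and set $\omega_i:=x_i/|x|$ for $i=1,2$. By the very definition $\bar\p_i=\p_i+\omega_i\p_t$, so $\p_i=\bar\p_i-\omega_i\p_t$, whereas $\p_t=\p_t$. Introducing the vector $\hat\omega:=(1,-\omega_1,-\omega_2)$, one checks $\hat\omega_0^2=\hat\omega_1^2+\hat\omega_2^2=1$, so $\hat\omega\in\{\pm1,\SS^1\}$ is a legitimate null vector in the sense of Definition \ref{YH-2} (the sums in the hypothesis run $0$ to $2$, so the relevant sphere is $\SS^1$).

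For the cubic expression $N_4^{\alpha\beta\gamma}\p_\alpha h\,\p_\beta v\,\p_\gamma w$, I would substitute $\p_\alpha=\hat\omega_\alpha\p_t+\bar\p_\alpha$ (with the convention $\bar\p_0\equiv 0$) and expand trilinearly. The "all-bad" summand equals
\[
\Bigl(\sum_{\alpha,\beta,\gamma=0}^2 N_4^{\alpha\beta\gamma}\hat\omega_\alpha\hat\omega_\beta\hat\omega_\gamma\Bigr)\p_t h\,\p_t v\,\p_t w,
\]
which vanishes by the null hypothesis applied at $\xi=\hat\omega$. Each of the remaining seven expansion terms carries at least one $\bar\p$-factor in one of the three slots, and so is pointwise dominated by $|\bar\p h||\p v||\p w|+|\p h||\bar\p v||\p w|+|\p h||\p v||\bar\p w|$, giving the fourth line of \eqref{null:structure-cubic}.

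For the estimates containing second derivatives, I would first exploit the commutation $[\p_t,\bar\p_i]=0$ (valid because $\omega_i$ is $t$-independent) to derive the clean decomposition
\[
\p^2_{\alpha\beta}\phi=\hat\omega_\alpha\hat\omega_\beta\p_t^2\phi+R_{\alpha\beta}[\phi],\qquad |R_{\alpha\beta}[\phi]|\ls|\bar\p\p\phi|,
\]
where $R_{\alpha\beta}$ collects the cross-terms, each of which is a $\bar\p$ acting on a first-order derivative. Plugging this identity (and the first-order one above) into the three remaining products in \eqref{null:structure-cubic} and expanding multilinearly, the leading scalar is again of the form $\sum N_j^{\cdots}\hat\omega_\alpha\hat\omega_\beta\cdots$, and it vanishes by the appropriate null hypothesis evaluated at $\xi=\hat\omega$. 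The residual terms each contain at least one $\bar\p\phi$ or $\bar\p\p\phi$ factor; grouping them according to which factor bears the good derivative yields exactly the three schematic summands on each right-hand side.

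The only real obstacle is the combinatorial bookkeeping: one must verify that every mixed term in the multilinear expansion of the six- and five-index cases is absorbed into one of the three stated archetypes, and that the remainder bound $|R_{\alpha\beta}[\phi]|\ls|\bar\p\p\phi|$ does not produce uncontrolled factors of $|x|^{-1}$ (it does not, because $|\omega_i|\le 1$ and $\bar\p_i$ is applied to a $\p$-derivative without differentiating $\omega_i$ in the leading term). This is entirely parallel to Lemma \ref{lem:null}, whose three-dimensional analogue follows Lemma 6.6.5 of \cite{Hormander97book}; no new idea is required beyond replacing $\SS^2$ by $\SS^1$ throughout.
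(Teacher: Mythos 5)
Your proof is correct. Since the paper does not reproduce a proof but simply refers to Lemma 2.2 of \cite{HouYin20jde}, a direct line-by-line comparison is not possible; however, your argument — substituting $\p_\alpha=\hat\omega_\alpha\p_t+\bar\p_\alpha$ with $\hat\omega=(1,-x_1/|x|,-x_2/|x|)\in\{\pm1,\SS^1\}$, expanding multilinearly, killing the all-$\p_t$ term by evaluating the null form at $\xi=\hat\omega$, and bounding every remaining term by a schematic product containing at least one good derivative — is the standard decomposition used in that reference and in H\"ormander's treatment (Lemma 6.6.5 of \cite{Hormander97book}) that the paper invokes for the companion Lemma~\ref{lem:null}. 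You also correctly handle the one genuinely delicate point: in the second-order identity $\p^2_{\alpha\beta}\phi=\hat\omega_\alpha\hat\omega_\beta\p_t^2\phi+R_{\alpha\beta}[\phi]$, the decomposition is applied to the outer derivative first and then, after commuting $\p_t$ inward, to the inner one, so $\omega_i$ is never differentiated and no factor of $|x|^{-1}$ appears in $R_{\alpha\beta}$; explicitly $R_{\alpha\beta}[\phi]=\hat\omega_\alpha\bar\p_\beta\p_t\phi+\bar\p_\alpha\p_\beta\phi$ with $|R_{\alpha\beta}[\phi]|\ls|\bar\p\p\phi|$. You also correctly read the set $\{\pm1,\SS^1\}$ from Definition~\ref{YH-2} despite the typo in the lemma statement (which writes $\SS^2$ and a four-component $\xi$).
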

\begin{proof}
Proofs of Lemma \ref{lem:null} and Lemma \ref{lem:null-cubic} can be refereed to the one in Lemma 2.2 of \cite{HouYin20jde}.
\end{proof}

\subsection{Bootstrap assumptions}\label{sect2-2}

Define the energy
\begin{equation*}
E_k(t):=\sum_{|a|\le k}\|\p Z^au\|_{L^2_{x,y}}.
\end{equation*}
In the rest of the paper, we will make the following bootstrap assumptions:

\underline{For problem \eqref{QWE}, we assume that }
\begin{equation}\label{bootstrap}
\begin{split}
E_N(t)+\cX_N(t)&\le\ve_1(1+t)^{\ve_2},\\
E_{N-10}(t)+\cX_{N-10}(t)&\le\ve_1\le1,\\
\w{t+|x|}^{1.4}|(Z^au)_n(t,x)|&\le|n|^{|a|+7-N}\ve_1,\quad|a|\le N-8,n\in\Z_*;
\end{split}
\end{equation}

\underline{For problem \eqref{QWE-cubic}, we assume that}
\begin{equation}\label{bootstrap-cubic}
\begin{split}
E_N(t)+\cX_N(t)&\le\ve_1(1+t)^{\ve_2},\\
E_{N-9}(t)+\cX_{N-9}(t)&\le\ve_1\leq 1,\\
\w{t+|x|}^{0.9}|(Z^au)_n(t,x)|&\le|n|^{|a|+5-N}\ve_1,\quad|a|\le N-7,n\in\Z_*,
\end{split}
\end{equation}
where $\ve_1>0 $ and $\ve_2\in [0,1/10]$ will be determined later.

\vskip 0.3 true cm

According to the definition of $E_N(t)$ and \eqref{bootstrap}-\eqref{bootstrap-cubic}, one has
\begin{equation}\label{bootstrap'}
\|(\p Z^au)_n(t,x)\|_{L^2_x}\ls|n|^{|a|-N}\ve_1\w{t}^{\ve_2},\quad|a|\le N,n\in\Z_*.
\end{equation}

Based on the bootstrap estimates \eqref{bootstrap}-\eqref{bootstrap-cubic}, we will derive some stronger estimates
of the solutions $u$ to problems \eqref{QWE} or \eqref{QWE-cubic} so that
\eqref{bootstrap}-\eqref{bootstrap-cubic} can be closed.

\section{The pointwise estimates of the zero mode}\label{sect3}

\subsection{The pointwise estimates of the zero mode on $\R^3\times\T$}
In this subsection, we first give two basic lemmas, from which a series of the pointwise estimates on the zero mode
are derived.
\begin{lemma}
For any sufficiently smooth function $f(t,x)$, it holds that
\begin{equation}\label{Linfty}
\|f(t,x)\|_{L^\infty(\R^3)}\ls(1+t)^{-3/2}\|f\|_{L^2(\R^3)}+\sum_{i=1,2}\|\nabla^i_xf\|_{L^2(\R^3)}.
\end{equation}
\end{lemma}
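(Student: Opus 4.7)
The plan is to prove the bound via a three-piece Fourier frequency decomposition in the spatial variable. First, I would invoke the elementary pointwise bound $\|f(t,\cdot)\|_{L^\infty(\R^3)}\ls\|\widehat{f}(t,\cdot)\|_{L^1(\R^3)}$, where $\widehat{f}$ denotes the Fourier transform of $f$ in $x$. Setting $R:=1+t$, I would then split the frequency space into the three disjoint regions $A_1:=\{|\xi|\le R^{-1}\}$, $A_2:=\{R^{-1}\le|\xi|\le 1\}$, and $A_3:=\{|\xi|\ge 1\}$, and estimate $\int_{A_j}|\widehat f(t,\xi)|\,d\xi$ for $j=1,2,3$ separately.

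On the low-frequency piece $A_1$, the Cauchy-Schwarz inequality together with $|A_1|\sim R^{-3}$ and Plancherel gives
\[
\int_{A_1}|\widehat f|\,d\xi\le|A_1|^{1/2}\|\widehat f\|_{L^2(\R^3)}\ls R^{-3/2}\|f\|_{L^2(\R^3)}=(1+t)^{-3/2}\|f\|_{L^2(\R^3)}.
\]
On the intermediate piece $A_2$, writing $|\widehat f|=|\xi|^{-1}\cdot|\xi|\,|\widehat f|$ and applying Cauchy-Schwarz, together with the uniform-in-$R$ bound $\int_{A_2}|\xi|^{-2}d\xi\le 4\pi$, produces a bound by $\|\nabla_xf\|_{L^2(\R^3)}$. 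On the high-frequency piece $A_3$, the analogous argument with $|\widehat f|=|\xi|^{-2}\cdot|\xi|^2|\widehat f|$ and $\int_{|\xi|\ge 1}|\xi|^{-4}d\xi=4\pi$ yields a bound by $\|\nabla_x^2f\|_{L^2(\R^3)}$. Summing the three contributions gives the claim.

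There is no significant obstacle, since the argument is purely harmonic-analytic and does not interact with the structure of the nonlinear equation. The only subtle point worth flagging is the placement of the inner cutoff at the fixed scale $|\xi|=1$ (as opposed to a $t$-dependent scale); this is exactly what prevents the $\|\nabla_x^2f\|_{L^2}$ term from acquiring a spurious $(1+t)^{1/2}$ prefactor, which a naive single-scale two-piece decomposition at $|\xi|=R^{-1}$ would produce.
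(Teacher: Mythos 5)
Your proof is correct, and it rests on the same underlying idea as the paper's: split frequency space into three regions at scales $|\xi|\sim(1+t)^{-1}$ and $|\xi|\sim 1$, gaining $(1+t)^{-3/2}$ at low frequency and trading one and two derivatives on the middle and high pieces. The only difference is implementation: the paper realizes this via a homogeneous Littlewood--Paley decomposition combined with Bernstein's inequality and a dyadic sum (with $J=[\log_2(2+t)]+1$ playing the role of your cutoff $R^{-1}$), whereas you work directly with $\|\widehat f\|_{L^1}$, Cauchy--Schwarz, and Plancherel, which is slightly more elementary and avoids invoking dyadic-block machinery. Both arguments are sound and yield the stated estimate with uniform constants; your flagged observation about the fixed inner cutoff at $|\xi|=1$ is the right one and corresponds exactly to the paper's split of the dyadic sum at $j=0$.
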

\begin{proof}
For any fixed $t\ge0$, denote $J=[\log_2(2+t)]+1>0$.
Let $\dot\Delta_j$ be the homogeneous dyadic blocks of the frequency on $\R^3$. Then it follows from the Bernstein inequality
and direct computation that
\begin{equation*}
\begin{split}
\|f\|_{L^\infty(\R^3)}&\ls\sum_{j\in\Z}\|\dot\Delta_jf\|_{L^\infty(\R^3)}
\ls\sum_{j\in\Z}2^{3j/2}\|\dot\Delta_jf\|_{L^2(\R^3)}\\
&\ls\sum_{j\le-J}2^{3j/2}\|\dot\Delta_jf\|_{L^2(\R^3)}+\sum_{-J\le j\le0}2^{3j/2}\|\dot\Delta_jf\|_{L^2(\R^3)}
+\sum_{j\ge0}2^{3j/2}\|\dot\Delta_jf\|_{L^2(\R^3)}\\
&\ls2^{-3J/2}\|f\|_{L^2(\R^3)}+\sum_{-J\le j\le0}2^{j/2}\|\nabla_xf\|_{L^2(\R^3)}
+\sum_{j\ge0}2^{-j/2}\|\nabla_x^2f\|_{L^2(\R^3)}\\
&\ls(1+t)^{-3/2}\|f\|_{L^2(\R^3)}+\|\nabla_xf\|_{L^2(\R^3)}+\|\nabla_x^2f\|_{L^2(\R^3)},
\end{split}
\end{equation*}
which yields \eqref{Linfty}.
\end{proof}

\begin{lemma}\label{Weighted-Sobelev}
For any sufficiently smooth function $f(t,x)$, we have that
\begin{equation}\label{Sobolev:ineq}
\w{x}\w{t\pm|x|}^{1/2}|f(t,x)|\ls\|\Omega^{\le2}f\|_{L^2(\R^3)}
+\sum_{|a|+|b|\le1}\|\w{t\pm|x|}\nabla_x\nabla_x^a\Omega^bf\|_{L^2(\R^3)}.
\end{equation}
\end{lemma}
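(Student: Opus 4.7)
The plan is to pass to polar coordinates $x=r\omega$ and combine a Sobolev embedding on $\SS^2$ in the angular variable with a weighted one-dimensional Sobolev inequality in the radial direction.

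I would first split the argument according to whether $|x|\le 1$ or $|x|\ge 1$. On the bounded region $|x|\le 1$, the weights satisfy $\w{x}\sim 1$ and $\w{t\pm|x|}\sim\w{t}$, so the estimate reduces to the standard Sobolev embedding $H^2(\{|x|\le 2\})\hookrightarrow L^\infty$, and the $\w{t}^{1/2}$ factor on the left-hand side is absorbed using $\w{t\pm|x|}\gtrsim\w{t}$ into the weighted $\nabla_x$-terms on the right-hand side.

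On the main region $|x|\ge 1$, I would apply the pointwise Sobolev embedding $H^2(\SS^2)\hookrightarrow L^\infty(\SS^2)$,
$$|f(t,r\omega)|^2\ls\sum_{|b|\le 2}\|\Omega^b f(t,r\cdot)\|^2_{L^2(\SS^2)},$$
so that it suffices to bound $r^2\w{t\pm r}\|\Omega^b f(t,r\cdot)\|^2_{L^2(\SS^2)}$ for each $|b|\le 2$. Setting $\phi_b(r):=r\|\Omega^b f(t,r\cdot)\|_{L^2(\SS^2)}$, so that $\|\phi_b\|^2_{L^2_r}=\|\Omega^b f\|^2_{L^2(\R^3)}$, and starting from
$$\w{t\pm r}\phi_b(r)^2=-\int_r^\infty\tfrac{d}{ds}\bigl[\w{t\pm s}\phi_b(s)^2\bigr]\,ds,$$
an application of $|d\w{t\pm s}/ds|\le 1$, Cauchy--Schwarz, and AM--GM then gives the weighted 1D Sobolev inequality
$$\w{t\pm r}\phi_b(r)^2\ls\|\phi_b\|^2_{L^2_r}+\|\w{t\pm\cdot}\phi_b'\|^2_{L^2_r}.$$
Since $\p_r\Omega^b f=\omega\cdot\nabla_x\Omega^b f$, one has $|\phi_b'|\le\|\Omega^b f(t,r\cdot)\|_{L^2(\SS^2)}+r\|\nabla_x\Omega^b f(t,r\cdot)\|_{L^2(\SS^2)}$, reducing $\|\w{t\pm\cdot}\phi_b'\|^2_{L^2_r}$ to $\|\w{t\pm|x|}\nabla_x\Omega^b f\|^2_{L^2(\R^3)}$ plus a Hardy-type integral $\int\w{t\pm r}^2\|\Omega^b f(t,r\cdot)\|^2_{L^2(\SS^2)}\,dr$; the latter is controlled via a further integration by parts in $r$ that restores the missing $r^2$ Jacobian, producing $\|\Omega^b f\|^2_{L^2(\R^3)}+\|\w{t\pm|x|}\nabla_x\Omega^b f\|^2_{L^2(\R^3)}$ after an absorption step.

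The main technical obstacle is the case $|b|=2$, in which the derivative term produces $\|\w{t\pm|x|}\nabla_x\Omega^2 f\|_{L^2(\R^3)}$, a quantity not directly admissible by the right-hand side, which only allows $|a|+|b|\le 1$. I would resolve this by combining the commutator identity $[\p_i,\Omega_{jk}]=\delta_{ij}\p_k-\delta_{ik}\p_j$ (which permits $\nabla_x$ and $\Omega$ to be exchanged up to lower-order $\nabla_x$ terms) with a Gagliardo--Nirenberg-type interpolation on $\SS^2$ that trades the endpoint $L^\infty\lesssim H^2$ estimate for a product $\|\cdot\|_{L^2(\SS^2)}^{1/2}\|\cdot\|_{H^2(\SS^2)}^{1/2}$, so that the factor carrying the radial derivative is forced to involve at most one $\Omega$ and thus matches the RHS after commuting. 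The $\w{t+|x|}$ case follows by identical reasoning.
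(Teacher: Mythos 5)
Your overall strategy — an angular Sobolev embedding on $\SS^2$ combined with a weighted one-dimensional radial inequality, plus a separate treatment of a bounded region near the origin — is close in spirit to the paper's proof, and your treatment of the small-$|x|$ region via a cutoff, $H^2\hookrightarrow L^\infty$, and Hardy-type absorption essentially reproduces what the paper does. However, the resolution you propose for the key difficulty in the exterior region does not work as stated, and this is a genuine gap.

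You correctly observe that the naive route via $H^2(\SS^2)\hookrightarrow L^\infty(\SS^2)$ followed by a 1D weighted Sobolev in $r$ applied to each $\phi_b(r)=r\|\Omega^b f\|_{L^2(\SS^2)}$ produces the inadmissible term $\|\w{t\pm|x|}\nabla_x\Omega^2 f\|_{L^2(\R^3)}$ when $|b|=2$. But the fix you describe does not repair this. The commutator $[\p_i,\Omega_{jk}]=\delta_{ij}\p_k-\delta_{ik}\p_j$ only reorders $\nabla_x$ and $\Omega$; it never reduces the total number of angular derivatives, so rewriting $\nabla_x\Omega^2f$ as $\Omega^2\nabla_xf$ plus lower order still leaves a weighted term with two rotations and one radial derivative, which the right-hand side (with $|a|+|b|\le 1$) cannot absorb. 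The Gagliardo--Nirenberg interpolation $\|\cdot\|_{L^\infty(\SS^2)}\ls\|\cdot\|_{L^2(\SS^2)}^{1/2}\|\cdot\|_{H^2(\SS^2)}^{1/2}$ is the right kind of asymmetry to look for, but you do not say how to run the radial argument so that the weighted $\p_r$-derivative falls only on the factor with at most one $\Omega$, and that is precisely the heart of the matter: a symmetric $-\int_r^\infty\p_s[\cdots]\,ds$ differentiation will hit both factors.

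What makes the paper's proof close is that it invokes (3.19) of \cite{Sideris00}, an asymmetric weighted $L^4(\SS^2)$ estimate: with $R=\w{t\pm r}$, that inequality bounds
$\bigl(|x|^4 R^2\int_{\SS^2}|g|^4\,d\omega\bigr)^{1/4}$ by the product of $\bigl(\int_{|x'|\ge|x|}[R^2|\p_r g|^2+|R'|^2|g|^2]\bigr)^{1/4}$ and $\bigl(\int_{|x'|\ge|x|}|\Omega^{\le1}g|^2\bigr)^{1/4}$. Applying this to $g=\Omega^{\le1}f$ and using $W^{1,4}(\SS^2)\hookrightarrow L^\infty(\SS^2)$, the weighted radial derivative only ever touches $\Omega^{\le1}f$ (giving $\w{t\pm|x|}\p_r\Omega^{\le1}f$), while the extra rotation lands on the unweighted factor $\Omega^{\le2}f$. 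This built-in asymmetry is exactly what your plan is missing. Either cite a Sideris-type inequality of this form, or carry out a genuinely asymmetric radial integration (for instance, differentiating $R(r)\,[r\,\phi_1(r)]\,[r\,\phi_2(r)]$ and arranging, via H\"older, that $R\p_r$ hits only $\phi_1$); as written, the proposal does not close.
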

\begin{proof}
Recalling (3.19) in \cite{Sideris00}, one has that for smooth function $R(r)$,
\begin{equation}\label{Sobolev:ineq1}
\begin{split}
&\Big(|x|^4|R(r)|^2\int_{\SS^2}|f(t,|x|\omega)|^4d\omega\Big)^{1/4}\\
\ls&\Big(\int_{|x'|\ge|x|}[|R(r)|^2|\p_rf(t,x')|^2+|R'(r)|^2|f(t,x')|^2]dx'\Big)^{1/4}
\Big(\int_{|x'|\ge|x|}|\Omega^{\le1}f(t,x')|^2dx'\Big)^{1/4},
\end{split}
\end{equation}
where $r=|x|>0$.
Choosing $R(r)=\w{t\pm r}$ in \eqref{Sobolev:ineq1} and applying the Sobolev embedding $W^{1,4}(\SS^2)\hookrightarrow L^\infty(\SS^2)$
to obtain
\begin{equation}\label{Sobolev:ineq2}
\begin{split}
&\quad\;|x|\w{t\pm|x|}^{1/2}|f(t,x)|\ls
\Big(|x|^4|\w{t\pm|x|}|^2\int_{\SS^2}|\Omega^{\le1}f(t,|x|\omega)|^4d\omega\Big)^{1/4}\\
&\ls\|\Omega^{\le2}f(t,x')\|_{L^2(\R^3:|x'|\ge|x|)}
+\|\w{t\pm|x'|}\p_r\Omega^{\le1}f(t,x')\|_{L^2(\R^3:|x'|\ge|x|)}.
\end{split}
\end{equation}

When $|x|\ge1/3$, \eqref{Sobolev:ineq2} implies \eqref{Sobolev:ineq} directly.

Next, we prove \eqref{Sobolev:ineq} for $|x|\le1/3$.
Define the cutoff function $\chi(s)\in C^\infty(\R)$ which satisfies
\begin{equation}\label{chi:def}
0\le\chi\le1,\qquad\chi(s)=\left\{
\begin{aligned}
&1,\quad s\le1/3,\\
&0,\quad s\ge1/2.
\end{aligned}
\right.
\end{equation}
Applying the Sobolev embedding $H^2(\R^3)\hookrightarrow L^\infty(\R^3)$ to $\ds\chi(|x|)f(t,x)$ yields
\begin{equation}\label{Sobolev:ineq3}
|\chi(|x|)f(t,x)|\ls\|f(t,x)\|_{L^2(\R^3:|x|\le1)}
+\w{t}^{-1}\sum_{i=1,2}\|\w{t\pm|x|}\nabla_x^if(t,x)\|_{L^2(\R^3:|x|\le1)}.
\end{equation}
Utilizing \eqref{Sobolev:ineq2} to the first term on the right hand side of \eqref{Sobolev:ineq3}, we achieve
\begin{equation*}
\begin{split}
\w{t}^{1/2}\|f(t,x)\|_{L^2(\R^3:|x|\le1)}&\ls\Big\||x|^{-1}\Big\|_{L^2(\R^3:|x|\le1)}
\||x|\w{t\pm|x|}^{1/2} f(t,x)\|_{L^\infty(\R^3:0<|x|\le1)}\\
&\ls\|\Omega^{\le2}f\|_{L^2(\R^3)}
+\sum_{|a|+|b|\le1}\|\w{t\pm|x|}\nabla_x\nabla_x^a\Omega^bf\|_{L^2(\R^3)}.
\end{split}
\end{equation*}
This, together with \eqref{Sobolev:ineq3}, leads to \eqref{Sobolev:ineq} for $|x|\le1/3$.
\end{proof}

\begin{lemma}
For any multi-index $a$ with $|a|\le N-2$, it holds that
\begin{equation}\label{pw:zero}
\w{x}\w{t-|x|}^{1/2}|P_{=0}\p Z^au(t,x,y)|\ls E_{|a|+2}(t)+\cX_{|a|+2}(t).
\end{equation}
In addition, one has that in the region $|x|\ge\w{t}/2$,
\begin{equation}\label{pw:good}
\w{x}\w{t+|x|}^{1/2}|P_{=0}\bar\p Z^au(t,x,y)|\ls E_{|a|+2}(t)+\cX_{|a|+2}(t);\
\end{equation}
and in the region $|x|\le\w{t}/3$,
\begin{equation}\label{pw:awaycone}
\w{t}|P_{=0}\p Z^au(t,x,y)|\ls E_{|a|+2}(t)+\cX_{|a|+2}(t).
\end{equation}
\end{lemma}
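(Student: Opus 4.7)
The plan: all three inequalities reduce to applying the weighted-Sobolev Lemma \ref{Weighted-Sobelev} (or the basic $L^\infty$--$L^2$ inequality \eqref{Linfty}) to $P_{=0}\p Z^au$, after first reducing to $Z^a=\Gamma^a$ (any factor of $\p_y$ in $Z^a$ yields a quantity annihilated by $P_{=0}$) and exploiting $[P_{=0},\Gamma]=0$ for $\Gamma\in\{\p_{t,x},L,\Omega\}$ together with the basic bound $\|P_{=0}g\|_{L^2_x}\lesssim\|g\|_{L^2_{x,y}}$ from Cauchy--Schwarz on $\T$.

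For \eqref{pw:zero}, I apply Lemma \ref{Weighted-Sobelev} with the minus sign to $f=P_{=0}\p Z^au$. The $\Omega^{\le2}$ term becomes $\|\p\Omega^{\le2}Z^au\|_{L^2_{x,y}}\lesssim E_{|a|+2}(t)$. In the second term, each factor $\nabla_x\nabla_x^b\Omega^c$ with $|b|+|c|\le1$ combines with the outer $\p$ to form $\p^2 Z^{b'}u$ with $|b'|\le|a|+1$ weighted by $\w{t-|x|}$, which is exactly what appears in $\cX_{|a|+2}(t)$.

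For \eqref{pw:good}, at a fixed $|x|\ge\w{t}/2$ I apply the sharper inequality \eqref{Sobolev:ineq2} with $R(r)=\w{t+r}$ to $f=P_{=0}\bar\p Z^au$; the integration domain $|x'|\ge|x|\ge\w{t}/3$ matches precisely the annular region in the definition of $\cX_k$. The technical step is to commute $\p_r$ and $\Omega$ past $\bar\p_i=\p_i+(x_i/|x|)\p_t$: the correction terms involve derivatives of $x_i/|x|$, which are of order $1/|x|\lesssim\w{t+|x|}^{-1}$ on this region and thus contribute only lower-order pieces absorbed into $E_{|a|+2}(t)$, while the leading $\w{t+|x|}|\bar\p\p Z^{b'}u|$ pieces with $|b'|\le|a|+1$ give $\cX_{|a|+2}(t)$.

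For \eqref{pw:awaycone}, note that \eqref{pw:zero} alone is insufficient near the origin because $\w{x}$ can be as small as $1$ while we need a full $\w{t}$. Instead I introduce a cutoff $\chi=\chi(|x|)$ with $\chi\equiv1$ on $[0,\w{t}/3]$, $\supp\chi\subset[0,\w{t}/2]$ and $|\nabla_x^k\chi|\lesssim\w{t}^{-k}$, and apply \eqref{Linfty} to $\chi\cdot P_{=0}\p Z^au$. The non-differentiated piece is bounded by $\w{t}^{-3/2}E_{|a|}(t)$. Leibniz-expanding $\nabla_x^i(\chi\cdot P_{=0}\p Z^au)$ for $i=1,2$ yields factors $\nabla_x^j\chi$ (supplying $\w{t}^{-j}$) times $P_{=0}\p^{i+1-j}Z^au$ restricted to $\supp\chi$, where $\w{t-|x|}\sim\w{t}$; every such $\|P_{=0}\p^m Z^{b'}u\|_{L^2(\supp\chi)}$ with $m\ge2$ converts to $\w{t}^{-1}\cX_{|b'|+1}(t)$, and for $m=1$ the bare energy $E_{|b'|}(t)$ together with $\chi$-derivative decay suffices. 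Summing produces $\w{t}^{-1}(E_{|a|+2}(t)+\cX_{|a|+2}(t))$. The main technical obstacle I foresee is the commutator bookkeeping in \eqref{pw:good}: one must verify that every swap of $\nabla_x$ or $\Omega$ with the non-constant-coefficient vector $\bar\p$ introduces only $\w{t+|x|}^{-1}$-decaying errors, so that they are absorbed without spoiling the weight.
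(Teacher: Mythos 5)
Your proposal is correct and takes essentially the same approach as the paper: the paper proves \eqref{pw:zero} and \eqref{pw:good} by applying Lemma~\ref{Weighted-Sobelev} (with the $-$ and $+$ signs respectively) to $f=P_{=0}\p Z^au$ and $f=(1-\chi(|x|/\w{t}))P_{=0}\bar\p Z^au$, and proves \eqref{pw:awaycone} by applying \eqref{Linfty} to $\chi(|x|/\w{t})P_{=0}\p Z^au$, exactly as you do (modulo your choice to invoke the intermediate inequality \eqref{Sobolev:ineq2} directly rather than the full lemma with a cutoff, which is cosmetic since $|x|\ge\w{t}/2$ lies in the lemma's ``large $|x|$'' regime). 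Your explicit commutator bookkeeping for $\bar\p$ is a correct technicality that the paper leaves implicit.
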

\begin{proof}
For any smooth function $g(t,x,y)$, we have
\begin{equation*}
|P_{=0}g(t,x,y)|\ls\|P_{=0}g(t,x,y)\|_{L^2(\T)}.
\end{equation*}
By the definition of $\cX_{|a|+2}(t)$ with $d=3$, choosing $f=P_{=0}\p Z^au$ and $f=(1-\chi(\frac{|x|}{\w{t}}))P_{=0}\bar\p Z^au$ in \eqref{Sobolev:ineq}, respectively, and subsequently taking the $L^2_y$ norms on the both sides of the resulted inequalities,
then  \eqref{pw:zero} and \eqref{pw:good} are obtained, respectively.
In addition, applying \eqref{Linfty} to $\chi(\frac{|x|}{\w{t}})P_{=0}\p Z^au(t,x,y)$ yields \eqref{pw:awaycone}.
\end{proof}

\begin{corollary}
Under the bootstrap assumptions \eqref{bootstrap}, for any multi-index $a$ with $|a|\le N-12$, it holds that
\begin{equation}\label{pw:zero'}
\w{t+|x|}|P_{=0}\p Z^au(t,x,y)|\ls\ve_1.
\end{equation}
\end{corollary}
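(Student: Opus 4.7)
The plan is to derive \eqref{pw:zero'} directly from the preceding lemma combined with the bootstrap assumptions, by splitting into interior and exterior regions with respect to the light cone. The key observation is that since $|a|\le N-12$, we have $|a|+2\le N-10$, so by the monotonicity of $E_k$ and $\cX_k$ in $k$ together with the second line of \eqref{bootstrap},
\begin{equation*}
E_{|a|+2}(t)+\cX_{|a|+2}(t)\le E_{N-10}(t)+\cX_{N-10}(t)\le \ve_1.
\end{equation*}
Thus the right-hand sides of \eqref{pw:zero} and \eqref{pw:awaycone} are all bounded by $\ve_1$ up to constant.

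In the interior region $|x|\le\w{t}/3$, one simply notes $\w{t+|x|}\le\w{t}+\w{x}\ls\w{t}$, and then \eqref{pw:awaycone} immediately yields
\begin{equation*}
\w{t+|x|}|P_{=0}\p Z^au(t,x,y)|\ls\w{t}|P_{=0}\p Z^au(t,x,y)|\ls E_{|a|+2}(t)+\cX_{|a|+2}(t)\ls\ve_1.
\end{equation*}

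In the exterior region $|x|\ge\w{t}/3$, the key point is that $t\le 3|x|$ implies $t+|x|\le 4|x|$, hence $\w{t+|x|}\ls\w{x}$. Using the trivial bound $\w{t-|x|}^{1/2}\ge 1$, estimate \eqref{pw:zero} gives
\begin{equation*}
\w{t+|x|}|P_{=0}\p Z^au(t,x,y)|\ls\w{x}\w{t-|x|}^{1/2}|P_{=0}\p Z^au(t,x,y)|\ls\ve_1,
\end{equation*}
which completes the bound in this region.

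The step I expected to be delicate is the behavior near the light cone $|x|\approx t$, where $\w{t-|x|}$ degenerates to $1$ and one might worry that \eqref{pw:zero} is too weak. The good-derivative estimate \eqref{pw:good} is designed precisely to compensate for this loss, and one could imagine needing to interpolate between \eqref{pw:zero} and \eqref{pw:good} via the decomposition $\p_i=\bar\p_i-\tfrac{x_i}{|x|}\p_t$. However, since the target decay rate here is only $\w{t+|x|}^{-1}$ rather than $\w{t+|x|}^{-1}\w{t-|x|}^{-1/2}$, the factor $\w{t-|x|}^{1/2}\ge 1$ already suffices, and the good-derivative estimate \eqref{pw:good} is not needed for this corollary.
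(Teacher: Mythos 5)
Your proof is correct and takes essentially the same route as the paper, which simply cites \eqref{bootstrap}, \eqref{pw:zero} and \eqref{pw:awaycone}; you have correctly filled in the region splitting at $|x|=\w{t}/3$, the reduction $|a|+2\le N-10$, and the observation that \eqref{pw:good} is unnecessary here since $\w{t-|x|}^{1/2}\ge 1$ already suffices for the target decay $\w{t+|x|}^{-1}$.
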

\begin{proof}
\eqref{pw:zero'} follows from \eqref{bootstrap}, \eqref{pw:zero} and \eqref{pw:awaycone}.
\end{proof}

\subsection{The pointwise estimates of the zero mode on $\R^2\times\T$}

Although the structure of this subsection is completely similar to that of Subsection 3.1, due to the different space dimensions
and for readers' convenience, the related details are still given.

\begin{lemma}
For any sufficiently smooth function $f(t,x)$, it holds that
\begin{equation}\label{Linfty-cubic}
\|f(t,x)\|_{L^\infty(\R^2)}\ls~\mathrm{ln}^{1/2}(e+t)\|\nabla_x f\|_{L^2(\R^2)}+\w{t}^{-1}(\|f\|_{L^2(\R^2)}+\|\nabla^2_x f\|_{L^2(\R^2)}).
\end{equation}
\end{lemma}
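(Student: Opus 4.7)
The plan is to mirror the dyadic decomposition argument used to prove \eqref{Linfty} in three dimensions, but carefully account for the fact that in two spatial dimensions the critical Sobolev exponent for $L^\infty$ embedding is $s=1$, so the natural bound is logarithmically marginal. As before, I will fix $t\ge 0$, set $J:=[\log_2(e+t)]+1>0$, let $\dot\Delta_j$ denote the homogeneous Littlewood--Paley projectors on $\R^2$, and decompose $f=\sum_{j\in\Z}\dot\Delta_j f$. The Bernstein inequality in two dimensions gives $\|\dot\Delta_j f\|_{L^\infty(\R^2)}\ls 2^{j}\|\dot\Delta_j f\|_{L^2(\R^2)}$, so one only needs to sum this geometric factor against $L^2$ norms of the dyadic pieces, split appropriately in the frequency variable.

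The splitting I would use is into three ranges $\{j\le -J\}$, $\{-J\le j\le J\}$, and $\{j\ge J\}$. On the low-frequency tail I would bound $\|\dot\Delta_j f\|_{L^2}\le\|f\|_{L^2}$ and sum the geometric series to get a contribution of order $2^{-J}\|f\|_{L^2}\ls\w{t}^{-1}\|f\|_{L^2}$. On the high-frequency tail I would estimate $\|\dot\Delta_j f\|_{L^2}\ls 2^{-2j}\|\nabla_x^2 f\|_{L^2}$, whence the geometric sum yields $2^{-J}\|\nabla_x^2 f\|_{L^2}\ls\w{t}^{-1}\|\nabla_x^2 f\|_{L^2}$. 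These two pieces together produce the $\w{t}^{-1}(\|f\|_{L^2}+\|\nabla_x^2 f\|_{L^2})$ contribution in the statement.

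The main subtlety is the middle range $-J\le j\le J$, where the naive dyadic bound $2^j\cdot 2^{-j}\|\nabla_x f\|_{L^2}=\|\nabla_x f\|_{L^2}$ summed term-by-term would give a factor $J\sim\log(e+t)$ rather than $\log^{1/2}(e+t)$. To save the square root I would apply Cauchy--Schwarz to the sum:
\begin{equation*}
\sum_{-J\le j\le J}2^j\|\dot\Delta_j f\|_{L^2}
\le\Big(\sum_{-J\le j\le J}1\Big)^{1/2}
\Big(\sum_{-J\le j\le J}2^{2j}\|\dot\Delta_j f\|_{L^2}^2\Big)^{1/2}
\ls J^{1/2}\|\nabla_x f\|_{L^2(\R^2)},
\end{equation*}
using almost-orthogonality $\sum_j 2^{2j}\|\dot\Delta_j f\|_{L^2}^2\ls\|\nabla_x f\|_{L^2}^2$. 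Since $J\ls\ln(e+t)$, this yields the desired $\ln^{1/2}(e+t)\|\nabla_x f\|_{L^2}$ term.

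Combining the three ranges gives \eqref{Linfty-cubic}. I expect the only real point to watch is exactly this Cauchy--Schwarz step in the middle range, since summing dyadic bounds bluntly would cost a full logarithm and be insufficient for the later decay analysis of the zero mode on $\R^2\times\T$; every other step is a routine geometric-sum argument parallel to the three-dimensional proof.
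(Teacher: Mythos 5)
Your proof is correct. Note that the paper itself does not prove this lemma; it simply cites Lei's paper for the inequality. What you have done is give a self-contained proof that mirrors the paper's own dyadic Littlewood--Paley argument for the three-dimensional analogue \eqref{Linfty}, correctly adapted to the two-dimensional setting where the $L^\infty\hookleftarrow\dot H^1$ dyadic bound is only marginally failing. The two ingredients you supplied are exactly the right ones: (i) the symmetric frequency split $\{j\le -J\}\cup\{-J\le j\le J\}\cup\{j\ge J\}$ (rather than the asymmetric split at $-J$ and $0$ used in the 3D proof), without which the $\w{t}^{-1}$ decay on the $\|\nabla_x^2 f\|_{L^2}$ term would be lost; and (ii) Cauchy--Schwarz on the middle band against the square-function characterization $\sum_j 2^{2j}\|\dot\Delta_j f\|_{L^2}^2\sim\|\nabla_x f\|_{L^2}^2$, which converts the naive $J\sim\ln(e+t)$ loss into the stated $\ln^{1/2}(e+t)$. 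All the geometric-sum and Bernstein steps check out, and the normalization $J=[\log_2(e+t)]+1$ gives $2^{-J}\ls\w{t}^{-1}$ and $J\ls\ln(e+t)$ as needed.
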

\begin{proof}
The proof can be found in \cite[(3.4)]{Lei16}.
\end{proof}

\begin{lemma}
For any sufficiently smooth function $f(t,x)$, one has
\begin{equation}\label{Sobolev:ineq-cubic}
\w{x}^{1/2} \w{t\pm|x|}^{1/2}|f(t,x)|\ls\|\Omega^{\le 1}f\|_{L^2(\R^2)}+\sum_{|a|+|b|\le1}\|\w{t\pm|x|}\nabla_x\nabla_x^a\Omega^bf\|_{L^2(\R^2)}.
\end{equation}
\end{lemma}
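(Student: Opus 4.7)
The plan is to mirror the proof of Lemma \ref{Weighted-Sobelev} but with the spherical and ambient Sobolev embeddings adapted to two space dimensions, splitting into the exterior region $|x|\ge 1/3$ and the interior region $|x|\le 1/3$.

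\smallskip

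\emph{Exterior region.} First I would derive a 2D analogue of \eqref{Sobolev:ineq1}. For a smooth weight $R(r)$ and the radial function $h(r):=\int_{\SS^1}|f(t,r\omega)|^2\,d\omega$, the fundamental theorem of calculus gives
\begin{equation*}
|x|\,R(|x|)^2\,h(|x|)=-\int_{|x|}^\infty\partial_r\bigl(rR(r)^2h(r)\bigr)\,dr.
\end{equation*}
Expanding the derivative produces three terms: $R^2 h$, $2rRR'h$ and $2rR^2\!\int_{\SS^1}\!f\,\partial_r f\,d\omega$. Choosing $R(r)=\w{t\pm r}^{1/2}$, so $|R'|\le 1$, and passing to the volume element $dx'=r\,dr\,d\omega$ in $\R^2$, I would bound the derivative-free contributions by $\|f\|_{L^2(|x'|\ge|x|)}^2$ up to a term $\int_{|x'|\ge|x|}\w{t\pm|x'|}^{1/2}|x'|^{-1}|f|^2\,dx'$ which can be absorbed into the Cauchy--Schwarz control of the cross term
\begin{equation*}
\int_{|x'|\ge|x|}\w{t\pm|x'|}\,|f|\,|\partial_r f|\,dx'\ \lesssim\ \|f\|_{L^2}\,\|\w{t\pm|\cdot|}\,\partial_r f\|_{L^2}.
\end{equation*}
Replacing $f$ by $\Omega^{\le 1}f$ throughout, and then invoking the Sobolev embedding $W^{1,2}(\SS^1)\hookrightarrow L^\infty(\SS^1)$ (with $\nabla_{\SS^1}$ controlled by $\Omega_{12}$) to pass from the $L^2(\SS^1)$ average to pointwise values of $f(t,r\omega)$, yields
\begin{equation*}
|x|^{1/2}\w{t\pm|x|}^{1/2}|f(t,x)|\ \lesssim\ \|\Omega^{\le 1}f\|_{L^2(\R^2)}+\!\!\sum_{|a|+|b|\le 1}\!\!\|\w{t\pm|x|}\nabla_x\nabla_x^a\Omega^b f\|_{L^2(\R^2)}
\end{equation*}
for every $|x|>0$. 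On $|x|\ge 1/3$ this is equivalent to \eqref{Sobolev:ineq-cubic} since $\w{x}\sim|x|$ there.

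\smallskip

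\emph{Interior region.} For $|x|\le 1/3$, I would apply the Sobolev embedding $H^2(\R^2)\hookrightarrow L^\infty(\R^2)$ to the cutoff $\chi(|x|)f$ (with $\chi$ as in \eqref{chi:def}) to obtain
\begin{equation*}
|\chi(|x|)f(t,x)|\ \lesssim\ \|f\|_{L^2(|x|\le 1)}+\w{t}^{-1}\!\!\sum_{i=1,2}\|\w{t\pm|x|}\nabla_x^i f\|_{L^2(|x|\le 1)},
\end{equation*}
where the $\w{t}^{-1}$ arises because $\w{t\pm|x|}\sim\w{t}$ on $\mathrm{supp}\,\chi$. The second term, multiplied by the desired $\w{t}^{1/2}$, is already dominated by the right-hand side of \eqref{Sobolev:ineq-cubic}. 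For the $L^2$ term I would recycle the exterior estimate at small $|x|$: from $|f(t,x)|\lesssim|x|^{-1/2}\w{t}^{-1/2}(\text{RHS})$ on $0<|x|\le 1$,
\begin{equation*}
\|f\|_{L^2(|x|\le 1)}\ \le\ \||x|^{-1/2}\|_{L^2(|x|\le 1)}\,\bigl\||x|^{1/2}f\bigr\|_{L^\infty(0<|x|\le 1)}\ \lesssim\ \w{t}^{-1/2}(\text{RHS}),
\end{equation*}
since $\||x|^{-1/2}\|_{L^2(\R^2:|x|\le 1)}^2=\int_0^1 r^{-1}\cdot r\,dr=1$. Multiplying through by $\w{t}^{1/2}\sim\w{x}^{1/2}\w{t\pm|x|}^{1/2}$ on the interior region yields \eqref{Sobolev:ineq-cubic} for $|x|\le 1/3$.

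\smallskip

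The main obstacle is arranging Cauchy--Schwarz in the Sideris-type identity so that the ``bad'' intermediate term $\int\w{t\pm|x'|}^{1/2}|x'|^{-1}|\Omega^{\le 1}f|^2\,dx'$ is either absorbed into the cross term $\int\w{t\pm|x'|}\,|f||\partial_r f|\,dx'$ via AM--GM or into $\|\Omega^{\le 1}f\|_{L^2}^2$. The critical 2D feature making everything work is that $|x|^{-1/2}\in L^2(|x|\le 1)$, which just barely fails in higher dimensions and dictates the exponent $1/2$ on $\w{x}$ in the statement.
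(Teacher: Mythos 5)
Your plan follows essentially the same two-region strategy as the paper's proof: a Sideris-type weighted trace inequality on circles for the exterior $|x|\ge 1/3$, and a cutoff plus $H^2(\R^2)\hookrightarrow L^\infty$ combined with the square-integrability of $|x|^{-1/2}$ on the unit disk for the interior. The interior part of your argument is in fact exactly what the paper does.

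The one genuine wrinkle is in the exterior step. You apply the fundamental theorem of calculus to $g(r)=rR(r)^2h(r)$, with the Jacobian factor $r$ inside the derivative. This produces the extra piece $\int_{|x|}^\infty R^2h\,dr=\int_{|x|}^\infty\w{t\pm r}h\,dr$ (note: the weight should be $\w{t\pm|x'|}$, not $\w{t\pm|x'|}^{1/2}$, in your $dx'$ expression), and you flag this as the obstacle, proposing to absorb it into the cross term or into $\|\Omega^{\le1}f\|_{L^2}^2$. Neither absorption is clean: the term carries a full $\w{t\pm|x'|}$ weight with no derivative, so it matches neither $\|\Omega^{\le1}f\|_{L^2}^2$ nor anything Cauchy--Schwarz gives you from $\int r\w{t\pm r}|h'|\,dr$. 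The actual resolution is simpler than you think: since $h\ge 0$, this extra term enters the FTC identity with a \emph{minus} sign, so for an upper bound on $|x|\w{t\pm|x|}h(|x|)$ you may simply drop it. Once you observe the sign, your computation closes. Alternatively, the paper sidesteps the term entirely by not putting $r$ inside the derivative: it writes $|x|\w{t\pm|x|}h(|x|)=-|x|\int_{|x|}^\infty\p_\rho[\w{t\pm\rho}h(\rho)]\,d\rho$ and only afterwards replaces $|x|$ by $\rho$ (using $|x|\le\rho$) to recreate the 2D volume element. That variant never produces the extra $\w{t\pm\rho}h$ term. Either fix works; your write-up as it stands leaves the gap open. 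Everything else, including the sharp role of $\||x|^{-1/2}\|_{L^2(|x|\le 1)}<\infty$ and the exponent $1/2$ on $\w{x}$, is correct.
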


\begin{proof}
By Sobolev's embedding $H^1(\SS^1)\hookrightarrow L^\infty(\SS^1)$ on the circle, we calculate
\begin{equation}\label{Sobolev:ineq1-cubic}
\begin{split}
&|x|\w{t\pm |x|}|f(t,|x|\frac{x}{|x|})|^2\ls |x|\w{t\pm |x|}\int_{\SS^1}|\Omega^{\le1}f(t,|x|\omega)|^2d\omega\\
=&-\int_{\SS^1}d\omega \int^{\infty}_{|x|}|x|\p_\rho[\w{t\pm \rho}|\Omega^{\le1}f(t,\rho\omega)|^2]d\rho\\
\ls& \int_{\SS^1}d\omega \int^{\infty}_{|x|}[\w{t\pm \rho}|\Omega^{\le1}f(t,\rho\omega)||\partial_\rho\Omega^{\le1}f(t,\rho\omega)|+
|\Omega^{\le1}f(t,\rho\omega)|^2]\rho d\rho\\
\ls& \int_{\SS^1}d\omega \int^{\infty}_{|x|}[\w{t\pm \rho}^2|\partial_\rho\Omega^{\le1}f(t,\rho\omega)|^2+
|\Omega^{\le1}f(t,\rho\omega)|^2]\rho d\rho\\
\ls&~\|\Omega^{\le1}f(t,x')\|^2_{L^2(\R^2:|x'|\ge|x|)}
+\|\w{t\pm|x'|}\nabla_x\Omega^{\le1}f(t,x')\|^2_{L^2(\R^2:|x'|\ge|x|)}.
\end{split}
\end{equation}

For $|x|\ge1/3$, \eqref{Sobolev:ineq1-cubic} yields \eqref{Sobolev:ineq-cubic}.

Next, we prove \eqref{Sobolev:ineq-cubic} for $|x|\le1/3$.
As in the proof of Lemma \ref{Weighted-Sobelev}, applying the Sobolev embedding $H^2(\R^2)\hookrightarrow L^\infty(\R^2)$ to $\ds\chi(|x|)f(t,x)$
derives
\begin{equation}\label{Sobolev:ineq3-cubic}
|\chi(|x|)f(t,x)|\ls\|f(t,x)\|_{L^2(\R^2:|x|\le1)}
+\w{t}^{-1}\sum_{i=1,2}\|\w{t\pm|x|}\nabla_x^if(t,x)\|_{L^2(\R^2:|x|\le1)},
\end{equation}
where $\chi$ is defined by \eqref{chi:def}.
By using \eqref{Sobolev:ineq1-cubic} to the first term on the right hand side of \eqref{Sobolev:ineq3-cubic}, we arrive at
\begin{equation*}
\begin{split}
\w{t}^{1/2}\|f(t,x)\|_{L^2(\R^2:|x|\le1)}&\ls \||x|^{-1/2}\|_{L^2(\R^2:|x|\le1)}
\||x|^{1/2}\w{t\pm|x|}^{1/2}f(t,x)\|_{L^\infty(\R^2:|x|\le1)}\\
&\ls\|\Omega^{\le 1}f\|_{L^2(\R^2)}+\|\w{t\pm|x|}\nabla_x\Omega^{\leq 1}f\|_{L^2(\R^2)}.
\end{split}
\end{equation*}
This, together with \eqref{Sobolev:ineq3-cubic}, leads to \eqref{Sobolev:ineq-cubic} for $|x|\le1/3$.
\end{proof}

\begin{lemma}
For any multi-index $a$ with $|a|\le N-2$, it holds that
\begin{equation}\label{pw:zero-cubic}
\w{x}^{1/2} \w{t-|x|}^{1/2} |P_{=0}\p Z^au(t,x,y)|\ls E_{|a|+2}(t)+\cX_{|a|+2}(t).
\end{equation}
Furthermore, one has that in the region $|x|\ge\w{t}/2$,
\begin{equation}\label{pw:good-cubic}
\w{x}^{1/2} \w{t+|x|}^{1/2} |P_{=0}\bar\p Z^au(t,x,y)|\ls E_{|a|+2}(t)+\cX_{|a|+2}(t);\
\end{equation}
and in the region $|x|\le\w{t}/3$,
\begin{equation}\label{pw:awaycone-cubic}
\w{t}\mathrm{ln}^{-1/2}(e+t) |P_{=0}\p Z^au(t,x,y)|\ls E_{|a|+2}(t)+\cX_{|a|+2}(t).
\end{equation}
\end{lemma}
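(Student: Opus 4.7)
The three claims are exactly the $2$D analogues of \eqref{pw:zero}--\eqref{pw:awaycone}, so my plan is to transcribe the $3$D argument from Subsection 3.1 with the $2$D Sobolev-type bounds \eqref{Sobolev:ineq-cubic} and \eqref{Linfty-cubic} replacing their $3$D counterparts. The central reduction is that $P_{=0}f(t,x,y)=f_0(t,x)$ is $y$-independent, so every pointwise estimate on a $P_{=0}$-projection is really an estimate in $(t,x)$ alone; Lemma \ref{lem:proj} then supplies both $\|P_{=0}g\|_{L^2(\R^2)}\ls\|g\|_{L^2(\R^2\times\T)}$ (which feeds the $E_{|a|+2}$ piece on the right) and the commutations $[P_{=0},\Omega]=[P_{=0},\nabla_x]=0$ that allow me to move the projection through each vector field.

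\textbf{Proof of \eqref{pw:zero-cubic} and \eqref{pw:good-cubic}.} For \eqref{pw:zero-cubic} I plug $f=P_{=0}\p Z^a u$ into the weighted Sobolev inequality \eqref{Sobolev:ineq-cubic}: the unweighted piece $\|\Omega^{\le 1}P_{=0}\p Z^a u\|_{L^2(\R^2)}$ is bounded by $E_{|a|+1}(t)$, while the weighted pieces rewrite, after pushing $P_{=0}$ past $\Omega$ and $\nabla_x$, as $\|\w{t-|x|}P_{=0}\p^2 Z^{a'}u\|_{L^2(\R^2)}$ with $|a'|\le|a|+1$, directly controlled by the first term of $\cX_{|a|+2}(t)$. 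Estimate \eqref{pw:good-cubic} is proved by the same substitution (with the $+$ sign in \eqref{Sobolev:ineq-cubic}) applied to $(1-\chi(|x|/\w{t}))P_{=0}\bar\p Z^a u$, where $\chi$ is the cutoff from \eqref{chi:def}: the support of $1-\chi(|x|/\w{t})$ lies in $\{|x|\ge\w{t}/3\}$, which is exactly the domain where the second term of $\cX_{|a|+2}(t)$ supplies $\|\w{t+|x|}P_{=0}\bar\p\p Z^{a'}u\|_{L^2(\R^2)}$, and the commutators $[\bar\p,\Omega]$ and $[\bar\p,\nabla_x]$ contribute only lower-order terms absorbed using $|\bar\p g|\le|\p g|$ and $\w{|x|}\sim\w{t+|x|}$ in the exterior.

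\textbf{The interior estimate \eqref{pw:awaycone-cubic}, the main obstacle.} The only step that differs genuinely from the $3$D argument is \eqref{pw:awaycone-cubic}, because the endpoint embedding $H^1(\R^2)\hookrightarrow L^\infty(\R^2)$ fails and must be replaced by the logarithmic substitute \eqref{Linfty-cubic}. Applied to $\chi(|x|/\w{t})P_{=0}\p Z^a u$, \eqref{Linfty-cubic} produces a top-order term $\mathrm{ln}^{1/2}(e+t)\|\nabla_x(\chi P_{=0}\p Z^a u)\|_{L^2(\R^2)}$; since $\chi(|x|/\w{t})$ is supported in $\{|x|\le\w{t}/2\}$ where $\w{t-|x|}\gt\w{t}$, Leibniz together with Lemma \ref{lem:proj} bounds this term by $\mathrm{ln}^{1/2}(e+t)\cdot\w{t}^{-1}\cX_{|a|+2}(t)$, and the remaining contributions in \eqref{Linfty-cubic} are $\w{t}^{-1}(E_{|a|+2}+\cX_{|a|+2})$ after using the $\w{t}^{-1}$ factors produced by derivatives of the cutoff and the projection bound $\|P_{=0}\cdot\|_{L^2(\R^2)}\ls\|\cdot\|_{L^2(\R^2\times\T)}$. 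Multiplying through by $\w{t}\mathrm{ln}^{-1/2}(e+t)$ yields \eqref{pw:awaycone-cubic}. The logarithmic loss is the unavoidable structural difference from the $3$D estimate \eqref{pw:awaycone} and is the reason the interior pointwise decay here is weaker than in dimension three.
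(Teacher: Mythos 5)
Your proposal is correct and takes essentially the same approach as the paper: you make the same substitutions $f=P_{=0}\p Z^a u$ and $f=(1-\chi(|x|/\w{t}))P_{=0}\bar\p Z^a u$ into \eqref{Sobolev:ineq-cubic} for \eqref{pw:zero-cubic}--\eqref{pw:good-cubic}, and apply \eqref{Linfty-cubic} to $\chi(|x|/\w{t})P_{=0}\p Z^a u$ for \eqref{pw:awaycone-cubic}, then use Lemma \ref{lem:proj} to pass the projection through $L^2$ norms and vector fields. The commutator bookkeeping and the observations that $\w{t-|x|}\gtrsim\w{t}$ on the cutoff's support and $\w{x}\sim\w{t+|x|}$ in the exterior simply make explicit what the paper's one-sentence proof leaves implicit.
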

\begin{proof}
By the definition of $\cX_{|a|+2}(t)$ with $d=2$, choosing $f=P_{=0}\p Z^au$ and $f=(1-\chi(\frac{|x|}{\w{t}}))P_{=0}\bar\p Z^au$ in \eqref{Sobolev:ineq-cubic}, respectively, and taking the $L^2_y$ norms on the both sides of the resulted inequalities, then
we can get \eqref{pw:zero-cubic} and \eqref{pw:good-cubic}, respectively.
Applying \eqref{Linfty-cubic} to $\chi(\frac{|x|}{\w{t}})P_{=0}\p Z^au(t,x,y)$ derives \eqref{pw:awaycone-cubic}.
\end{proof}

\begin{corollary}
Under the bootstrap assumptions \eqref{bootstrap-cubic}, for any multi-index $a$ with $|a|\le N-11$, it holds that
\begin{equation}\label{pw:zero'-cubic}
\w{t+|x|}^{1/2} |P_{=0}\p Z^au(t,x,y)|\ls\ve_1.
\end{equation}
\end{corollary}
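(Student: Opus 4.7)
The plan is to mimic the three-line argument used for \eqref{pw:zero'} in the $\R^3\times\T$ case: combine the two pointwise controls \eqref{pw:zero-cubic} and \eqref{pw:awaycone-cubic}, and then feed in the bootstrap bound on $E_{N-9}+\cX_{N-9}$. Since $|a|\le N-11$ we have $|a|+2\le N-9$, so \eqref{bootstrap-cubic} yields $E_{|a|+2}(t)+\cX_{|a|+2}(t)\le \ve_1$, and the only task is to produce the factor $\w{t+|x|}^{1/2}$ from the two regime-dependent bounds above.

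I will split $\R^2$ into the two regions $\{|x|\le\w{t}/3\}$ and $\{|x|\ge\w{t}/3\}$ that mirror the supports arising in the proofs of \eqref{pw:zero-cubic}--\eqref{pw:awaycone-cubic}. In the interior region $|x|\le\w{t}/3$, we have $\w{t+|x|}\sim\w{t}$, so \eqref{pw:awaycone-cubic} gives
\[
\w{t+|x|}^{1/2}|P_{=0}\p Z^a u(t,x,y)| \ls \w{t}^{1/2}\cdot\frac{\ln^{1/2}(e+t)}{\w{t}}\bigl(E_{|a|+2}+\cX_{|a|+2}\bigr) \ls \ve_1,
\]
because $\ln^{1/2}(e+t)/\w{t}^{1/2}$ is uniformly bounded in $t\ge 0$. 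In the exterior region $|x|\ge\w{t}/3$, we instead have $\w{t+|x|}\ls\w{x}$, and \eqref{pw:zero-cubic} together with $\w{t-|x|}^{1/2}\ge 1$ yields
\[
\w{t+|x|}^{1/2}|P_{=0}\p Z^a u(t,x,y)| \ls \w{x}^{1/2}|P_{=0}\p Z^a u(t,x,y)| \ls E_{|a|+2}(t)+\cX_{|a|+2}(t) \ls \ve_1.
\]

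Combining the two cases finishes the proof. There is no real obstacle here: the slight subtlety compared with the $\R^3\times\T$ version is the logarithm in the decay rate of the zero mode away from the cone (since the $2$D Sobolev embedding loses a logarithm), but this logarithm is absorbed comfortably by the weaker target weight $\w{t+|x|}^{1/2}$ (as opposed to the full $\w{t+|x|}$ in \eqref{pw:zero'}).
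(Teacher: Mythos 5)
Your proposal is correct and follows essentially the same route as the paper: the paper's proof simply cites \eqref{bootstrap-cubic}, \eqref{pw:zero-cubic} and \eqref{pw:awaycone-cubic}, and your regional split $\{|x|\le\w{t}/3\}$ versus $\{|x|\ge\w{t}/3\}$ together with the observation that $\ln^{1/2}(e+t)/\w{t}^{1/2}$ is bounded and $\w{t-|x|}^{1/2}\ge1$ is exactly the implicit bookkeeping the paper leaves to the reader.
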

\begin{proof}
\eqref{pw:zero'-cubic} follows from \eqref{bootstrap-cubic}, \eqref{pw:zero-cubic} and \eqref{pw:awaycone-cubic}.
\end{proof}

\section{The pointwise estimates of the non-zero modes}\label{sect4}
\subsection{The pointwise estimates of the non-zero modes on $\R^3\times\T$}
\begin{lemma}\label{lem:pwL2KG}
Let $v$ be a solution to $\Box_{t,x}v+v=\cF(t,x)$. Then one has
\begin{equation}\label{pwL2:KG}
\begin{split}
\w{t+|x|}^{1.4}|v(t,x)|&\ls\sum_{|a|\le5}\sup_{\tau\in[0,t]}\w{\tau}^{-1/10}
\|\w{\tau+|\cdot|}\Gamma^a\cF(\tau,\cdot)\|_{L^2(\R^3)}\\
&\quad+\sum_{|a|\le6}\|\w{\cdot}^2\Gamma^av(0,\cdot)\|_{L^2(\R^3)}
\end{split}
\end{equation}
and
\begin{equation}\label{pwL2:KG'}
\begin{split}
\w{t+|x|}^{3/2}|v(t,x)|&\ls\sum_{|a|\le5}\sup_{\tau\in[0,t]}\w{\tau}^{1/10}
\|\w{\tau+|\cdot|}\Gamma^a\cF(\tau,\cdot)\|_{L^2(\R^3)}\\
&\quad+\sum_{|a|\le6}\|\w{\cdot}^2\Gamma^av(0,\cdot)\|_{L^2(\R^3)}.
\end{split}
\end{equation}
\end{lemma}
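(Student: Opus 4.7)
The plan is to decompose $v = v^I + v^F$ via Duhamel's principle, where $v^I$ solves the homogeneous equation $\Box_{t,x}v^I + v^I = 0$ with the prescribed Cauchy data $(v(0,\cdot),\p_t v(0,\cdot))$, and $v^F$ solves $\Box_{t,x}v^F + v^F = \cF$ with zero data. A key structural point is that $\Gamma\in\{\p_{t,x},L,\Omega\}$ commutes with $\Box_{t,x}+1$ (the scaling vector field, which does not, is excluded from $\Gamma$), so both sides of the desired estimates are compatible with commuting $\Gamma^a$ through the equation, and the two pieces can be handled separately.

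For the homogeneous piece $v^I$, I would invoke the Klainerman-type pointwise inequality tailored to the Klein-Gordon equation in $\R^{1+3}$. The mass $m=1$ produces an extra factor of $\w{t+|x|}^{1/2}$ of decay compared with the wave case, yielding
\begin{equation*}
\w{t+|x|}^{3/2}|v^I(t,x)| \ls \sum_{|a|\le 6}\|\w{\cdot}^2\Gamma^a v(0,\cdot)\|_{L^2(\R^3)},
\end{equation*}
since the weighted vector-field $L^2$ norms are conserved (up to constants) along the homogeneous KG flow. Because $\w{t+|x|}^{3/2}$ is strictly stronger than $\w{t+|x|}^{1.4}$, this single bound supplies the initial-data term in both \eqref{pwL2:KG} and \eqref{pwL2:KG'}.

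For the inhomogeneous piece $v^F$, the main tool is the $L^\infty$-$L^2$ decay estimate of Georgiev \cite{Georgiev92}. I would apply it in two variants that encode different trade-offs between the target pointwise decay and the admissible behavior of $\cF(\tau,\cdot)$ in $\tau$. For \eqref{pwL2:KG'}, the estimate is used in its \emph{sharp} form: one obtains the full $\w{t+|x|}^{3/2}$ decay at the cost of requiring the weighted source norm to decay mildly, namely $\|\w{\tau+|\cdot|}\Gamma^a\cF(\tau,\cdot)\|_{L^2}\ls \w{\tau}^{-1/10}$, which is exactly what the factor $\sup_\tau\w{\tau}^{1/10}\|\cdot\|_{L^2}$ measures. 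For \eqref{pwL2:KG}, the same Georgiev-type estimate is used in its \emph{relaxed} form: one concedes a small amount of decay $\w{t+|x|}^{3/2}\to \w{t+|x|}^{1.4}$ in exchange for allowing the source to have a mild budget of growth $\w{\tau}^{1/10}$, which is precisely the content of $\sup_\tau\w{\tau}^{-1/10}\|\cdot\|_{L^2}$. The two variants arise from different choices of parameters in the same stationary-phase decomposition of the KG fundamental solution, and the number $5$ of vector fields on the right reflects the $H^{s}$-type loss in that decomposition.

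The main obstacle is invoking Georgiev's $L^\infty$-$L^2$ estimate in exactly these two weight pairings $(1.4, -1/10)$ and $(3/2, +1/10)$; its underlying proof involves a stationary phase decomposition of the Klein-Gordon propagator and a delicate analysis near the light cone $|x|=t$, but these are well documented in the literature and can be quoted as a black box. Once this ingredient is available, the remaining work is a routine combination with the homogeneous estimate above and elementary bookkeeping of weights.
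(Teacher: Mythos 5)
Your decomposition $v=v^I+v^F$ is harmless (though unnecessary: Georgiev's Theorem~1 already bounds $\w{t+|x|}^{3/2}|v|$ in terms of both the data and the source), but your explanation of where the exponent pair $(1.4,\,-1/10)$ versus $(3/2,\,+1/10)$ comes from is wrong in a way that conceals the actual content of the lemma. Georgiev's estimate does \emph{not} come in two variants with a tunable parameter in the stationary-phase decomposition. It is a single estimate, and the source term appears as a dyadic-in-time sum
\begin{equation*}
\sum_{j\ge0}\;\sum_{|a|\le5}\;\sup_{\tau\in[0,t]}\vp_j(\tau)\,\|\w{\tau+|\cdot|}\Gamma^a\cF(\tau,\cdot)\|_{L^2(\R^3)},
\end{equation*}
where $\{\vp_j\}$ is a Littlewood-Paley partition of unity on $[0,\infty)$, together with the full weight $\w{t+|x|}^{3/2}$ on the left. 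That dyadic sum is \emph{not} the same as $\sup_\tau\w{\tau}^{\alpha}\|\cdot\|$ for any $\alpha$; converting it into the two weighted-sup forms in \eqref{pwL2:KG} and \eqref{pwL2:KG'} is precisely what Lemma~\ref{lem:pwL2KG} is accomplishing, and you have not explained how.

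The missing argument is elementary but essential. Since $\vp_j$ is supported on $[2^{j-1},2^{j+1}]$, only the first $j_0\sim\log_2\w{t}$ blocks contribute for $\tau\in[0,t]$. For \eqref{pwL2:KG'} one writes $\vp_j(\tau)\|\cdot\|\ls 2^{-j/10}\vp_j(\tau)\w{\tau}^{1/10}\|\cdot\|$ and sums the geometric series in $j$, yielding $\sup_\tau\w{\tau}^{1/10}\|\cdot\|$ with the full $3/2$ decay retained. For \eqref{pwL2:KG} one instead writes $\vp_j(\tau)\|\cdot\|\ls 2^{j/10}\vp_j(\tau)\w{\tau}^{-1/10}\|\cdot\|$; summing $2^{j/10}$ up to $j_0$ produces a factor $\w{t}^{1/10}\ls\w{t+|x|}^{1/10}$, which, moved to the left, degrades the weight from $\w{t+|x|}^{3/2}$ to $\w{t+|x|}^{1.4}$. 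The initial-data term is handled by observing that the dyadic sum $\sum_j\|\w{x'}^{3/2}\vp_j(|x'|)\Gamma^a v(0,x')\|_{L^2}$ is controlled by $\|\w{\cdot}^2\Gamma^a v(0,\cdot)\|_{L^2}$ by the same geometric-series device in $|x'|$. Without this bookkeeping the two stated inequalities do not follow from what you quote, so the proof as written has a genuine gap.
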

\begin{proof}
It follows  from Theorem 1 in \cite{Georgiev92} that
\begin{equation}\label{pwL2:KG1}
\begin{split}
\w{t+|x|}^{3/2}|v(t,x)|&\ls\sum_{j=0}^\infty\sum_{|a|\le5}\sup_{\tau\in[0,t]}
\vp_j(\tau)\|\w{\tau+|\cdot|}\Gamma^a\cF(\tau,\cdot)\|_{L^2(\R^3)}\\
&\quad+\sum_{j=0}^\infty\sum_{|a|\le6}\|\w{x'}^{3/2}\vp_j(|x'|)\Gamma^av(0,x')\|_{L^2(\R^3)},
\end{split}
\end{equation}
where $\{\vp_j\}_{j=0}^\infty$ is a Littlewood-Paley partition of unit satisfying
\begin{equation}\label{pwL2-phi-def}
\begin{split}
\sum_{j=0}^\infty\vp_j(\tau)=1, \tau\ge0,\quad \vp_j\in C_0^\infty(\R), \vp_j\ge0, j\ge0,\\
\supp\vp_j=[2^{j-1},2^{j+1}], j\ge1,\quad \supp\vp_0\cap\R_+=[0,2].
\end{split}
\end{equation}
Let $j_0:=[\log_2\w{t}]+2$. Then $\vp_j(\tau)\equiv0$ holds for any $\tau\in[0,t]$ and $j>j_0$.
Thereby, we arrive at
\begin{equation}\label{pwL2:KG2}
\begin{split}
&\sum_{j=0}^\infty\sum_{|a|\le5}\sup_{\tau\in[0,t]}\vp_j(\tau)\|\w{\tau+|\cdot|}\Gamma^a\cF\|_{L^2(\R^3)}\\
&\ls\Big(\sum_{j=0}^{j_0}\sup_{\tau\in[0,t]}\vp_j(\tau)\w{\tau}^{1/10}\Big)
\sum_{|a|\le5}\sup_{\tau\in[0,t]}\w{\tau}^{-1/10}\|\w{\tau+|\cdot|}\Gamma^a\cF\|_{L^2(\R^3)}\\
&\ls\Big(\sum_{j=0}^{j_0}2^{j/10}\Big)\sum_{|a|\le5}\sup_{\tau\in[0,t]}
\w{\tau}^{-1/10}\|\w{\tau+|\cdot|}\Gamma^a\cF\|_{L^2(\R^3)}\\
&\ls\w{t}^{1/10}\sum_{|a|\le5}\sup_{\tau\in[0,t]}
\w{\tau}^{-1/10}\|\w{\tau+|\cdot|}\Gamma^a\cF\|_{L^2(\R^3)}.
\end{split}
\end{equation}
On the other hand, it is easy to get that
\begin{equation}\label{pwL2:KG3}
\begin{split}
\sum_{j=0}^\infty\sum_{|a|\le6}\|\w{x'}^{3/2}\vp_j(|x'|)\Gamma^av(0,x')\|_{L^2(\R^3)}
\ls\sum_{|a|\le6}\|\w{\cdot}^2\Gamma^av(0,\cdot)\|_{L^2(\R^3)}.
\end{split}
\end{equation}
Substituting \eqref{pwL2:KG2} and \eqref{pwL2:KG3} into \eqref{pwL2:KG1} yields \eqref{pwL2:KG}.
The proof of \eqref{pwL2:KG'} is analogous.
\end{proof}

\begin{lemma}\label{lem:pw:nonzero}
Let $u$ be the solution to \eqref{QWE}.
Under the bootstrap assumptions \eqref{bootstrap}, for any $n\in\Z_*$ and multi-index $a$ with $|a|\le N-8$, it holds that
\begin{equation}\label{pw:nonzero}
\w{t+|x|}^{1.4}|(Z^au)_n(t,x)|\ls|n|^{|a|+13/2-N}(\ve+\ve_1^2)
\end{equation}
and
\begin{equation}\label{dyu:pw'}
\w{t+|x|}^{3/2}|(\p_yu)_n(t,x)|\ls|n|^{-9}(\ve+\ve_1^2).
\end{equation}
\end{lemma}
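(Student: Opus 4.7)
The plan is to analyze each non-zero Fourier mode separately as a solution of a Klein-Gordon equation with mass $|n|$. Applying $Z^a$ to \eqref{QWE} and invoking \eqref{eqn:high}, then extracting the $n$-th Fourier coefficient in $y$, gives
\[(\Box_{t,x}+|n|^2)(Z^a u)_n(t,x)=(Q^a)_n(t,x).\]
I would then rescale $(t,x)\mapsto(t/|n|,x/|n|)$ to convert this into a unit-mass Klein-Gordon equation, apply Lemma \ref{lem:pwL2KG} to the rescaled function, and undo the change of variables. Tracking the powers of $|n|$ produced by the rescaling (one from the $1/|n|^2$ rescaling of the forcing, three halves from the $L^2_x$ change of variables on $\R^3$, and one from converting $\w{|n|t+|n||x|}^{1.4}$ to $|n|^{1.4}\w{t+|x|}^{1.4}$) yields the claimed exponent $|n|^{|a|+13/2-N}$ after the nonlinearity is bounded.

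The heart of the argument is thus the control of $\Gamma^b (Q^a)_n$ in the weighted $L^2$ norm of \eqref{pwL2:KG}, for $|b|\le 5$. By \eqref{eqn:high}, $Q^a$ is a sum of quadratic products of factors $\partial^{\le 2}Z^{c_1}u$ and $\partial Z^{c_2}u$ with $|c_1|+|c_2|\le |a|+5\le N-3$. Taking the $n$-th Fourier coefficient of such a product converts it into a discrete convolution $\sum_{m\in\Z}(\cdot)_{n-m}(\cdot)_m$, which I would split into three regimes: $m=0$, $n-m=0$, and $m,n-m\in\Z_*$. In each case, Hölder places one factor in $L^\infty_x$ via either the zero-mode pointwise estimate \eqref{pw:zero'} or the non-zero-mode bootstrap assumption in \eqref{bootstrap}, and the other in $L^2_x$ via the top-order energy bound \eqref{bootstrap'}. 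The bootstrap on non-zero modes supplies a negative power $|m|^{|c_1|+7-N}$ per factor and the energy bound supplies $|m|^{|c_2|-N}$ per factor, so that the resulting series in $m$ converges absolutely once $N$ is large enough; the triangle inequality $|n|\le|m|+|n-m|$ then distributes the negative powers to produce the announced $|n|^{|a|+13/2-N}$.

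The main obstacle is the simultaneous bookkeeping of three quantities: the weight $\w{\tau+|x|}$ built into the Klein-Gordon inequality, the allowed polynomial time growth $\tau^{\varepsilon_2}$ of the top-order energy, and the distribution of inverse powers of $|n|$, $|m|$, $|n-m|$. The pointwise bound from \eqref{bootstrap} decays at rate $\w{\tau+|x|}^{-1.4}$, which absorbs the weight and leaves the net rate $\w{\tau}^{-9/10-\varepsilon_2}$ in time, exactly matching the $\tau^{-1/10}$ budget in \eqref{pwL2:KG} provided $\varepsilon_2$ is small. The restriction $|a|\le N-8$ in the conclusion reflects the requirement $|a|+5\le N-3$ above, together with the loss of up to five extra derivatives coming from $\Gamma^b$ in \eqref{pwL2:KG}; any terms where both factors exceed the $L^\infty$ range $|c|\le N-8$ are impossible, so no top-order energy is ever placed in $L^\infty$.

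The second estimate \eqref{dyu:pw'} follows from the identity $(\p_y u)_n=\mathrm{i} n\, u_n$. Applied with $|a|=0$, \eqref{pw:nonzero} already yields $\w{t+|x|}^{1.4}|n||u_n|\lesssim |n|^{15/2-N}(\ve+\ve_1^2)$, and since $N\ge 25$ the power of $|n|$ available is far stronger than $|n|^{-9}$. To upgrade the temporal weight from $\w{t+|x|}^{1.4}$ to $\w{t+|x|}^{3/2}$, I would repeat the argument using the sharper variant \eqref{pwL2:KG'} of Lemma \ref{lem:pwL2KG}: the extra $\tau^{1/5}$ factor it costs in the nonlinear estimate is absorbed by the surplus decay in $|n|$ available at the lowest-order index, completing the proof.
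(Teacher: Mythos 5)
Your plan reproduces the paper's argument: Fourier-decompose, rescale each mode to a unit-mass Klein--Gordon equation, apply the Georgiev-type $L^\infty$--$L^2$ bound of Lemma \ref{lem:pwL2KG}, and close the nonlinearity by a zero-mode/non-zero-mode convolution split with H\"older, exactly as the paper does. The $|n|$-accounting for the rescaling arrives at the correct exponent $|n|^{1/2}$ on the source term, and the decomposition into the cases $m=0$, $n-m=0$, $m,n-m\in\Z_*$ is the same as the paper's $J_n^{bc}$ vs. $J_{mn}^{bc}$. Your handling of \eqref{dyu:pw'} via $(\p_y u)_n=\mathrm{i}n\,u_n$ applied to $u_n$ is a cosmetic simplification of the paper's choice to re-run the argument with $Z^a=\p_y$; both are equivalent.

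There is, however, a real error in the last sentence: you claim that the extra factor $\tau^{1/5}$ arising when passing from \eqref{pwL2:KG} (which uses $\sup_\tau\w{\tau}^{-1/10}\|\cdot\|$) to \eqref{pwL2:KG'} (which uses $\sup_\tau\w{\tau}^{+1/10}\|\cdot\|$) is ``absorbed by the surplus decay in $|n|$.'' That cannot work: the time variable and the mode index are independent, and no amount of extra smallness in $|n|$ will render $\sup_\tau\w{\tau}^{1/10}\|(Q^{a'})_n(\tau)\|_{L^2}$ finite if $\|(Q^{a'})_n(\tau)\|_{L^2}$ merely grows like $\w{\tau}^{\ve_2}$. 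What actually saves the argument --- and what the paper's proof exploits --- is that at this lowest order $|a'|\le 5$ (or $\le 6$) \emph{both} factors of each quadratic term satisfy $|b|,|c|\le 7\le N-13$, so one factor can be placed in $L^\infty$ with the full pointwise decay $\w{\tau+|x|}^{-1.4}$ while the other is controlled in $L^2$; after absorbing the weight $\w{\tau+|x|}$, the nonlinearity decays like $\w{\tau}^{-0.4+\ve_2}\ls\w{\tau}^{-0.3}$, which is enough extra time-decay to pay for the $\w{\tau}^{1/10}$ in \eqref{pwL2:KG'}. At general order $|a|$ one only has $\w{\tau}^{\ve_2}$ growth (because one factor must be put in $L^2$ via the top energy), so the $3/2$ weight genuinely cannot be reached there --- precisely why both \eqref{pw:nonzero} with weight $1.4$ and \eqref{dyu:pw'} with weight $3/2$ but lowest order are proved. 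You should replace the ``$|n|$-decay'' explanation by this time-decay argument. (A secondary slip: the stated rate $\w{\tau}^{-9/10-\varepsilon_2}$ in your penultimate paragraph does not match any quantity in the estimate; the relevant net rate is $\w{\tau}^{-0.4+\ve_2}$ when both factors are low-order, and $\w{\tau}^{\ve_2}$ otherwise. This does not affect your conclusion for \eqref{pw:nonzero}, but it reinforces that you should redo this bookkeeping when fixing the last step.)
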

\begin{proof}
Taking the Fourier transformation on the both sides of \eqref{eqn:high} for the variable $y$ to obtain
\begin{equation}\label{pw:nonzero1}
(\Box_{t,x}+|n|^2)(Z^au)_n(t,x)=(Q^a)_n(t,x).
\end{equation}
Setting $(s,z)=(|n|t,|n|x)$, $\Box_{s,z}=\p_s^2-\Delta_z$ and $(\overline{Z^au})_n(s,z):=(Z^au)_n(\frac{s}{|n|},\frac{z}{|n|})$,
then \eqref{pw:nonzero1} is changed to
\begin{equation*}
|n|^2(\Box_{s,z}+1)(\overline{Z^au})_n(s,z)=(\overline{Q^a})_n(s,z):=(Q^a)_n(\frac{s}{|n|},\frac{z}{|n|}).
\end{equation*}
Thus, it can be derived from \eqref{pwL2:KG} that
\begin{equation*}
\begin{split}
\w{s+|z|}^{1.4}|(\overline{Z^au})_n(s,z)|
&\ls|n|^{-2}\sum_{|a'|\le5}\sum_{\tilde\Gamma\in\{\p_{\tau,z'},L,\Omega\}}
\sup_{\tau\in[0,s]}\w{\tau}^{-0.1}\|\w{\tau+|z'|}\tilde\Gamma^{a'}(\overline{Q^a})_n(\tau,z')\|_{L_{z'}^2(\R^3)}\\
&\quad+|n|^{-2}\sum_{|a'|\le6}\sum_{\tilde\Gamma\in\{\p_{\tau,z'},L,\Omega\}}
\|\w{z'}^2\tilde\Gamma^{a'}(\overline{Z^au})_n(0,z')\|_{L_{z'}^2(\R^3)},
\end{split}
\end{equation*}
where $\tilde\Gamma'$s are the vector fields in $(\tau,z')$ variables.
Note that $\w{t+|x|}\ls\w{n(t+|x|)}=\w{s+|z|}$, and the vectors $L, \Omega$ are homogeneous
in $(s,z)$ and are scaling invariant.
Then returning to the $(t,x)$ variables, we find that
\begin{equation}\label{pw:nonzero2}
\begin{split}
\w{t+|x|}^{1.4}|(Z^au)_n(t,x)|&\ls|n|^{1/2}\sum_{|a'|\le5}
\sup_{\tau\in[0,t]}\w{\tau}^{-0.1}\|\w{\tau+|x|}\Gamma^{a'}(Q^a)_n(\tau, x)\|_{L^2_x(\R^3)}\\
&\quad+|n|^{3/2}\sum_{|a'|\le6}\|\w{x}^2\Gamma^{a'}(Z^au)_n(0,x)\|_{L^2_x(\R^3)}.
\end{split}
\end{equation}
In addition, the definition of $Q^a$ in \eqref{eqn:high} leads to
\begin{equation}\label{pw:nonzero3}
\sum_{|a'|\le5}\|\w{\tau+|x|}\Gamma^{a'}(Q^a)_n(\tau, x)\|_{L_x^2}
\ls\sum_{|b|+|c|\le|a|+5}\Big(J_n^{bc}+\sum_{m\in\Z_*,m\neq n}J_{mn}^{bc}\Big),
\end{equation}
where
\begin{equation*}
\begin{split}
J_n^{bc}&:=\|\w{\tau+|x|}(\p\p^{\le1}Z^bu)_n(P_{=0}\p\p^{\le1}Z^cu)\|_{L_x^2},\\
J_{mn}^{bc}&:=\|\w{\tau+|x|}(\p\p^{\le1}Z^bu)_{n-m}(\p\p^{\le1}Z^cu)_m\|_{L_x^2}.
\end{split}
\end{equation*}
For $J_n^{bc}$ with $|b|\le|a|-3$, the assumptions \eqref{bootstrap} imply
\begin{equation*}
\begin{split}
\sum_{\substack{|b|+|c|\le|a|+5,\\|b|\le|a|-3}}J_n^{bc}
&\ls\sum_{\substack{|b|+|c|\le|a|+5,\\|b|\le|a|-3}}
\|\w{\tau+|x|}(\p\p^{\le1}Z^bu)_n\|_{L^\infty}\|P_{=0}\p\p^{\le1}Z^cu\|_{L_x^2}\\
&\ls\sum_{|b|\le|a|-3}|n|^{|b|+9-N}\ve_1^2\w{\tau}^{\ve_2-0.4}
\ls|n|^{|a|+6-N}\ve_1^2\w{\tau}^{-0.3}.
\end{split}
\end{equation*}
For $J_n^{bc}$ with $|b|\ge|a|-2$, we can see that $|c|\le7\le N-13$ holds.
Then \eqref{bootstrap'} and \eqref{pw:zero'} ensure that
\begin{equation*}
\begin{split}
\sum_{\substack{|b|+|c|\le|a|+5,\\|a|-2\le|b|\le|a|+5}}J_n^{bc}
&\ls\sum_{\substack{|b|+|c|\le|a|+5,\\|a|-2\le|b|\le|a|+5}}
\|(\p\p^{\le1}Z^bu)_n\|_{L_x^2}\|\w{\tau+|x|}P_{=0}\p\p^{\le1}Z^cu\|_{L^\infty}\\
&\ls\sum_{|b|\le|a|+5}|n|^{|b|+1-N}\ve_1^2\w{\tau}^{\ve_2}\ls|n|^{|a|+6-N}\ve_1^2\w{\tau}^{\ve_2}.
\end{split}
\end{equation*}
Thus one can obtain
\begin{equation}\label{pw:nonzero4}
\sum_{|b|+|c|\le|a|+5}J_n^{bc}\ls|n|^{|a|+6-N}\ve_1^2\w{\tau}^{\ve_2}.
\end{equation}
For $J_{mn}^{bc}$ with $|b|\le|a|-3$, we conclude from \eqref{bootstrap} and \eqref{bootstrap'} that
\begin{equation}\label{pw:nonzero5}
\begin{split}
\sum_{\substack{|b|+|c|\le|a|+5,\\|b|\le|a|-3}}J_{mn}^{bc}
&\ls\sum_{\substack{|b|+|c|\le|a|+5,\\|b|\le|a|-3}}
\|\w{\tau+|x|}(\p\p^{\le1}Z^bu)_{n-m}\|_{L^\infty}\|(\p\p^{\le1}Z^cu)_m\|_{L_x^2}\\
&\ls\ve_1^2\sum_{\substack{|b|+|c|\le|a|+5,\\|b|\le|a|-3}}
|n-m|^{|b|+9-N}|m|^{|c|+1-N}\w{\tau}^{\ve_2-0.4}\\
&\ls\ve_1^2|n-m|^{|a|+6-N}|m|^{|a|+6-N}\w{\tau}^{-0.3}.
\end{split}
\end{equation}
For $J_{mn}^{bc}$ with $|b|\ge|a|-2$ and $|a|\ge10$, then $|c|\le7\le N-10$ holds.
Thus, we achieve
\begin{equation}\label{pw:nonzero6}
\begin{split}
\sum_{\substack{|b|+|c|\le|a|+5,\\|a|-2\le|b|\le|a|+5}}J_{mn}^{bc}
&\ls\sum_{\substack{|b|+|c|\le|a|+5,\\|a|-2\le|b|\le|a|+5}}
\|(\p\p^{\le1}Z^bu)_{n-m}\|_{L_x^2}\|\w{\tau+|x|}(\p\p^{\le1}Z^cu)_m\|_{L^\infty}\\
&\ls\ve_1^2\sum_{\substack{|b|+|c|\le|a|+5,\\|a|-2\le|b|\le|a|+5}}|n-m|^{|b|+1-N}|m|^{|c|+9-N}\w{\tau}^{-0.3}\\
&\ls\ve_1^2|n-m|^{|a|+6-N}|m|^{16-N}\w{\tau}^{-0.3}\\
&\ls\ve_1^2|n-m|^{|a|+6-N}|m|^{|a|+6-N}\w{\tau}^{-0.3}.
\end{split}
\end{equation}
Thereafter, for $10\le|a|\le N-8$, collecting \eqref{pw:nonzero5} and \eqref{pw:nonzero6} yields
\begin{equation}\label{pw:nonzero7}
\sum_{|b|+|c|\le|a|+5}\sum_{m\in\Z_*,m\neq n}J_{mn}^{bc}\ls\ve_1^2|n|^{|a|+6-N}\w{\tau}^{-0.3}.
\end{equation}
If $|a|\le9$, it is easy to find that $|b|,|c|\le|a|+5\le14\le N-10$.
Then we have
\begin{equation}\label{pw:nonzero8}
\begin{split}
J_{mn}^{bc}&\ls\ve_1^2\w{\tau}^{-0.3}\min\{|n-m|^{|b|+9-N}|m|^{|c|+1-N},|n-m|^{|b|+1-N}|m|^{|c|+9-N}\}\\
&\ls\ve_1^2\w{\tau}^{-0.3}\min\{|n-m|^{23-N}|m|^{|a|+6-N},|n-m|^{|a|+6-N}|m|^{23-N}\}.
\end{split}
\end{equation}
For $|a|\le9$, it is deduced from \eqref{pw:nonzero8} and $N\ge25$ that
\begin{equation}\label{pw:nonzero9}
\begin{split}
&\w{\tau}^{0.3}\sum_{m\in\Z_*,m\neq n}J_{mn}^{bc}\\
&\ls\ve_1^2\sum_{\substack{m\in\Z_*,m\neq n,\\|m|\ge|n|/2}}|n-m|^{23-N}|m|^{|a|+6-N}
+\ve_1^2\sum_{\substack{m\in\Z_*,m\neq n,\\|m|\le|n|/2}}|n-m|^{|a|+6-N}|m|^{23-N}\\
&\ls\ve_1^2|n|^{|a|+6-N}\sum_{m\in\Z_*}|m|^{23-N}
\ls\ve_1^2|n|^{|a|+6-N}.
\end{split}
\end{equation}
For the second line of \eqref{pw:nonzero2}, \eqref{initial:data} leads to
\begin{equation}\label{pw:nonzero10}
\sum_{|a'|\le6}\|\w{x}^2\Gamma^{a'}(Z^au)_n(0,x)\|_{L^2_x(\R^3)}\ls\ve|n|^{|a|+6-N-1}.
\end{equation}
Inserting \eqref{pw:nonzero3}, \eqref{pw:nonzero4}, \eqref{pw:nonzero7}, \eqref{pw:nonzero9} and \eqref{pw:nonzero10}
into \eqref{pw:nonzero2} derives \eqref{pw:nonzero}.

At last, we turn to the proof of \eqref{dyu:pw'}.
Let $Z^a=\p_y$ and repeat the steps to obtain \eqref{pw:nonzero2} with \eqref{pwL2:KG'} instead of \eqref{pwL2:KG}.
Then we arrive at
\begin{equation}\label{pw:nonzero11}
\w{t+|x|}^{3/2}|(Z^au)_n(t,x)|\ls|n|^{3/5}\sum_{|a'|\le6}
\sup_{\tau\in[0,t]}\w{\tau}^{0.1}\|\w{\tau+|x|}(Q^{a'})_n(\tau, x)\|_{L^2_x(\R^3)}+\ve|n|^{15/2-N},
\end{equation}
where \eqref{pw:nonzero10} for $|a|=1$ has been used.
In addition, by \eqref{bootstrap} and \eqref{pw:nonzero9}, one then has
\begin{equation*}
\begin{split}
&\sum_{|a'|\le6}\|\w{\tau+|x|}(Q^{a'})_n(\tau, x)\|_{L^2_x(\R^3)}\\
&\ls\ve_1^2|n|^{7-N}\w{\tau}^{-0.3}
+\sum_{|b|,|c|\le7}\|\w{\tau+|x|}(\p Z^bu)_n\|_{L^\infty}\|P_{=0}\p Z^cu\|_{L_x^2}\\
&\ls\ve_1^2|n|^{15-N}\w{\tau}^{-0.3}.
\end{split}
\end{equation*}
This, together with \eqref{pw:nonzero11} and $N\ge25$, yields \eqref{dyu:pw'}.
\end{proof}

\begin{corollary}
Let $u$ be the solution of \eqref{QWE}.
Under the bootstrap assumptions \eqref{bootstrap}, for any multi-index $a$ with $|a|\le N-8$, it holds that
\begin{equation}\label{pw:nonzero'}
|P_{\neq0}Z^au(t,x,y)|\ls\w{t+|x|}^{-1.4}(\ve+\ve_1^2).
\end{equation}
\end{corollary}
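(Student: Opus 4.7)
The plan is to reduce the pointwise bound on $P_{\neq 0}Z^au$ to the already established pointwise bound on the individual Fourier modes $(Z^au)_n$ given by \eqref{pw:nonzero}. First I would write the Fourier expansion of $P_{\neq 0}Z^au$ with respect to $y$:
\begin{equation*}
P_{\neq 0}Z^au(t,x,y)=\sum_{n\in\Z_*}(Z^au)_n(t,x)\,e^{{\rm i}ny},
\end{equation*}
and then apply the triangle inequality to obtain
\begin{equation*}
|P_{\neq 0}Z^au(t,x,y)|\le\sum_{n\in\Z_*}|(Z^au)_n(t,x)|.
\end{equation*}

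Next I would invoke \eqref{pw:nonzero} of Lemma \ref{lem:pw:nonzero}, which is valid under the bootstrap assumptions \eqref{bootstrap} for every $|a|\le N-8$ and every $n\in\Z_*$. Substituting that bound yields
\begin{equation*}
|P_{\neq 0}Z^au(t,x,y)|\ls\w{t+|x|}^{-1.4}(\ve+\ve_1^2)\sum_{n\in\Z_*}|n|^{|a|+13/2-N}.
\end{equation*}

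The remaining point is the convergence of the series $\sum_{n\in\Z_*}|n|^{|a|+13/2-N}$. Since $|a|\le N-8$, the exponent satisfies $|a|+13/2-N\le -3/2<-1$, so the sum is dominated by $2\sum_{n\ge 1}n^{-3/2}$, which is finite. Absorbing this numerical constant into $\ls$ produces exactly \eqref{pw:nonzero'}.

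There is essentially no obstacle here: the corollary is a summation-over-modes consequence of Lemma \ref{lem:pw:nonzero}, and the only thing to notice is that the gain $|n|^{|a|+13/2-N}$ comes with an exponent strictly less than $-1$ precisely because of the quantitative choice $N-8$ matched to the $|n|^{13/2}$ loss in \eqref{pw:nonzero}; this margin is exactly what makes the $\ell^1_n$ summation over $\Z_*$ converge uniformly in $(t,x,y)$.
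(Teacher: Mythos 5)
Your argument is correct and is essentially identical to the paper's: expand $P_{\neq 0}Z^au$ in its Fourier series in $y$, bound each mode via \eqref{pw:nonzero}, and sum over $n\in\Z_*$ using $|a|+13/2-N\le-3/2<-1$. No gaps.
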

\begin{proof}
For $|a|\le N-8$, it follows from \eqref{pw:nonzero} that
\begin{equation*}
|P_{\neq0}Z^au(t,x,y)|=\Big|\sum_{n\in\Z_*}e^{{\rm i}ny}(Z^au)_n(t,x)\Big|
\ls\w{t+|x|}^{-1.4}(\ve+\ve_1^2)\sum_{n\in\Z_*}|n|^{-3/2},
\end{equation*}
which yields \eqref{pw:nonzero'}.
\end{proof}

\subsection{The pointwise estimates of the non-zero modes on $\R^2\times\T$}

\begin{lemma}\label{lem:pwL2KG-cubic}
Let $v$ be the solution of $\Box_{t,x}v+v=\cF(t,x)$. Then one has
\begin{equation}\label{pwL2:KG-cubic}
\begin{split}
\w{t+|x|}^{0.9}|v(t,x)|&\ls\sum_{|a|\le 4}\sup_{\tau\in[0,t]}\w{\tau}^{-1/10}
\|\w{\tau+|\cdot|}\Gamma^a\cF(\tau,\cdot)\|_{L^2(\R^2)}\\
&\quad+\sum_{|a|\le 5}\|\w{\cdot}^2\Gamma^av(0,\cdot)\|_{L^2(\R^2)}
\end{split}
\end{equation}
and
\begin{equation}\label{pwL2:KG'-cubic}
\begin{split}
\w{t+|x|}|v(t,x)|&\ls\sum_{|a|\le 4}\sup_{\tau\in[0,t]}\w{\tau}^{1/10}
\|\w{\tau+|\cdot|}\Gamma^a\cF(\tau,\cdot)\|_{L^2(\R^2)}\\
&\quad+\sum_{|a|\le 5}\|\w{\cdot}^2\Gamma^av(0,\cdot)\|_{L^2(\R^2)}.
\end{split}
\end{equation}
\end{lemma}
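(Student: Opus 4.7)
The plan is to mimic the proof of Lemma \ref{lem:pwL2KG} verbatim, simply replacing the 3D pointwise $L^\infty$-$L^2$ estimate for the Klein-Gordon equation with its 2D counterpart. The 2D pointwise bound analogous to Theorem 1 of \cite{Georgiev92} states that for a solution $v$ of $\Box_{t,x}v+v=\cF$ on $\R^{1+2}$, one has
\begin{equation*}
\w{t+|x|}\,|v(t,x)|\ls\sum_{j=0}^\infty\sum_{|a|\le 4}\sup_{\tau\in[0,t]}\vp_j(\tau)\|\w{\tau+|\cdot|}\Gamma^a\cF(\tau,\cdot)\|_{L^2(\R^2)}+\sum_{j=0}^\infty\sum_{|a|\le 5}\|\w{x'}\vp_j(|x'|)\Gamma^av(0,x')\|_{L^2(\R^2)},
\end{equation*}
where $\{\vp_j\}$ is the same Littlewood-Paley partition of unity defined in \eqref{pwL2-phi-def}. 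The lower order of derivatives ($4$ and $5$ instead of $5$ and $6$) reflects that the relevant pointwise-trace Sobolev embeddings on $\SS^1$ and $\R^2$ are cheaper by one derivative than on $\SS^2$ and $\R^3$, and the weaker pointwise decay rate $\langle t+|x|\rangle^{-1}$ reflects the slower dispersion of 2D Klein-Gordon waves.

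Once this starting inequality is in hand, the proof of \eqref{pwL2:KG'-cubic} follows the exact template of \eqref{pwL2:KG}--\eqref{pwL2:KG3}. I would introduce the cutoff index $j_0:=[\log_2\w{t}]+2$, observe that $\vp_j(\tau)\equiv0$ on $[0,t]$ for $j>j_0$, and factor out $\w{\tau}^{1/10}$ from the sup inside the sum:
\begin{equation*}
\sum_{j=0}^{j_0}\sup_{\tau\in[0,t]}\vp_j(\tau)\|\w{\tau+|\cdot|}\Gamma^a\cF\|_{L^2(\R^2)}\ls\Big(\sum_{j=0}^{j_0}2^{-j/10}\Big)\sup_{\tau\in[0,t]}\w{\tau}^{1/10}\|\w{\tau+|\cdot|}\Gamma^a\cF\|_{L^2(\R^2)}.
\end{equation*}
The geometric sum is bounded by an absolute constant, so one can trade the $\w{t+|x|}^{1}$ on the left for $\w{t+|x|}^{0.9}$ and obtain \eqref{pwL2:KG-cubic}. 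The initial-data term is handled by bounding $\sum_j\w{x'}\vp_j(|x'|)\ls\w{x'}^2$ pointwise, which gives the $\w{x'}^2$ weight stated in the lemma. For \eqref{pwL2:KG'-cubic} one does not throw away any decay in $t+|x|$, so one needs the opposite factorization, namely extract a $\w{\tau}^{-1/10}$ inside the sup and sum $\sum_{j=0}^{j_0}2^{j/10}\ls\w{t}^{1/10}$ to absorb the loss.

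I do not expect any real obstacle: the whole argument is formally identical to Lemma \ref{lem:pwL2KG}, with $(d=3, \text{decay }3/2, \text{orders } 5/6)$ replaced by $(d=2, \text{decay }1, \text{orders }4/5)$. The only point requiring care is to cite or verify explicitly the 2D analog of the Georgiev pointwise bound with the Littlewood-Paley partition on the right-hand side; once that is granted, the trade-off of $\w{t+|x|}^{1/10}$ against a $\sup_{\tau}\w{\tau}^{-1/10}$ weight (and the symmetric trade for \eqref{pwL2:KG'-cubic}) is a mechanical consequence of the geometric series estimate.
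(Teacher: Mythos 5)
Your overall strategy is exactly the paper's: cite the two-dimensional version of Theorem~1 of \cite{Georgiev92} (with the Littlewood--Paley partition of unity from \eqref{pwL2-phi-def} and the lower derivative counts $|a|\le 4$, $|a|\le 5$), and then repeat the weight-trading argument of \eqref{pwL2:KG2}--\eqref{pwL2:KG3}. The paper's proof literally reduces to this one line after stating \eqref{pwL2:KG1-cubic}.

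However, you have the two factorizations attached to the wrong conclusions. Writing $\vp_j(\tau)=\vp_j(\tau)\w{\tau}^{-1/10}\cdot\w{\tau}^{1/10}$ gives $\sum_{j\le j_0}2^{-j/10}\ls 1$ times $\sup_\tau\w{\tau}^{1/10}\|\cdot\|$; since nothing is lost there is no $\w{t}^{1/10}$ to absorb, and this yields \eqref{pwL2:KG'-cubic} directly, \emph{not} \eqref{pwL2:KG-cubic}. Conversely, writing $\vp_j(\tau)=\vp_j(\tau)\w{\tau}^{1/10}\cdot\w{\tau}^{-1/10}$ gives $\sum_{j\le j_0}2^{j/10}\ls\w{t}^{1/10}$ times $\sup_\tau\w{\tau}^{-1/10}\|\cdot\|$; moving the $\w{t}^{1/10}$ to the left degrades $\w{t+|x|}$ to $\w{t+|x|}^{0.9}$ and yields \eqref{pwL2:KG-cubic}. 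Note also that you cannot get \eqref{pwL2:KG-cubic} from \eqref{pwL2:KG'-cubic} by ``trading'' $\w{t+|x|}$ for $\w{t+|x|}^{0.9}$, because the weight $\w{\tau}^{-1/10}$ in the target is strictly weaker than the $\w{\tau}^{1/10}$ you would be carrying. With the two cases swapped back, the proof is correct and agrees with the paper.
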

\begin{proof}
From Theorem 1 in \cite{Georgiev92}, we know that
\begin{equation}\label{pwL2:KG1-cubic}
\begin{split}
\w{t+|x|}|v(t,x)|&\ls\sum_{j=0}^\infty\sum_{|a|\le 4}\sup_{\tau\in[0,t]}
\vp_j(\tau)\|\w{\tau+|\cdot|}\Gamma^a\cF(\tau,\cdot)\|_{L^2(\R^2)}\\
&\quad+\sum_{j=0}^\infty\sum_{|a|\le 5}\|\w{x'}\vp_j(|x'|)\Gamma^av(0,x')\|_{L^2_{x'}(\R^2)},
\end{split}
\end{equation}
where $\{\vp_j\}_{j=0}^\infty$ is a Littlewood-Paley partition of unit defined in \eqref{pwL2-phi-def}.

The remaining proof process is completely similar to that of Lemma \ref{lem:pwL2KG} except utilizing \eqref{pwL2:KG1-cubic} instead
of \eqref{pwL2:KG1}, the related details are omitted here.
\end{proof}

\begin{lemma}\label{lem:pw:nonzero-cubic}
Let $u$ be the solution of \eqref{QWE-cubic}.
Under the bootstrap assumptions \eqref{bootstrap-cubic}, for any $n\in\Z_*$ and multi-index $a$ with $|a|\le N-7$, it holds that
\begin{equation}\label{pw:nonzero-cubic}
\w{t+|x|}^{0.9}|(Z^au)_n(t,x)|\ls |n|^{|a|+5-N}(\ve+\ve_1^3)
\end{equation}
and
\begin{equation}\label{dyu:pw'-cubic}
\w{t+|x|}|(\p_yu)_n(t,x)|\ls|n|^{13-N}(\ve+\ve_1^3).
\end{equation}
\end{lemma}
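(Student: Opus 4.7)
The plan is to mirror the proof of Lemma \ref{lem:pw:nonzero}, with two structural changes: the 3D Klein-Gordon pointwise-$L^2$ estimate is replaced by its 2D version \eqref{pwL2:KG-cubic}, and the bilinear mode convolution from $Q^a$ is replaced by the trilinear mode convolution from $\mathcal{C}^a$. After $y$-Fourier-transforming \eqref{eqn:high-cubic} I would obtain $(\Box_{t,x}+|n|^2)(Z^au)_n=(\mathcal{C}^a)_n$. Rescaling $(s,z)=(|n|t,|n|x)$ reduces this to a unit-mass 2D Klein-Gordon equation, so applying \eqref{pwL2:KG-cubic}, using the homogeneity and scaling invariance of $L,\Omega$, and then scaling back yields
\begin{equation*}
\w{t+|x|}^{0.9}|(Z^au)_n(t,x)| \lesssim \sum_{|a'|\le 4}\sup_{\tau\in[0,t]}\w{\tau}^{-1/10}\|\w{\tau+|\cdot|}\Gamma^{a'}(\mathcal{C}^a)_n(\tau,\cdot)\|_{L^2_x(\R^2)} + \ve\,|n|^{|a|+5-N},
\end{equation*}
where the data contribution is handled via \eqref{initial:data} with each $\p_y$ factor costing an $|n|^{-1}$ in the $L^2_{x,y}$ norm.

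The main work is bounding the nonlinear source. Each monomial in $\Gamma^{a'}\mathcal{C}^a$ is trilinear of the form $\p^{\le 2}Z^{b}u\cdot\p^{\le 2}Z^{c}u\cdot\p^{\le 2}Z^{d}u$ with $|b|+|c|+|d|\le|a|+4$, and its $n$-th Fourier mode is a convolution $\sum_{n_1+n_2+n_3=n}(Z^bu)_{n_1}(Z^cu)_{n_2}(Z^du)_{n_3}$. I would split into the four regimes defined by how many of $n_1,n_2,n_3$ vanish. The case $n_1=n_2=n_3=0$ is impossible since $n\neq 0$. In each remaining regime I place the highest-order factor (index at least $|a|-3$) in $L^2_x$ via \eqref{bootstrap'}, and the other two factors in $L^\infty$: zero-mode factors via \eqref{pw:zero'-cubic} and nonzero-mode factors via \eqref{bootstrap-cubic}. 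Because $N\ge 25$, the two $L^\infty$ factors carry at most seven derivatives, which is safely within the range where those pointwise bounds apply. The resulting triple sum over $n_1+n_2+n_3=n$ with $n_i\in\Z_*$ is summed as in \eqref{pw:nonzero8}--\eqref{pw:nonzero9} by distinguishing $|a|\le 9$ and $|a|\ge 10$ and using that $\sum_{m\in\Z_*}|m|^{23-N}<\infty$.

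The main obstacle is the weaker dispersive decay available on $\R^2$: the wave component only enjoys $\w{t}^{-1/2}$ by \eqref{pw:zero'-cubic}, so a \emph{single} zero-mode $L^\infty$ factor would not suffice to absorb both the weight $\w{\tau+|x|}$ in the source norm and the $\w{\tau}^{-1/10}$ loss coming from \eqref{pwL2:KG-cubic}. This is exactly where the cubic structure is essential: every term has three factors, and in each regime the \emph{two} $L^\infty$ factors together provide at least $\w{\tau+|x|}^{-1}$ temporal decay --- either two zero modes give $\w{\tau}^{-1/2}\cdot\w{\tau}^{-1/2}$ (absorbing the logarithm in \eqref{pw:awaycone-cubic} comfortably since $0.9+1/2>1$), or at least one nonzero mode contributes $\w{\tau+|x|}^{-0.9}$ that pairs with either a zero mode or another nonzero mode to exceed $\w{\tau+|x|}^{-1}$. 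The remaining $\ve_1^3\w{\tau}^{-1+\ve_2+1/10}$-type factor is then integrable against the sup in $\tau$, giving the desired $\ve_1^3$ bound.

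To prove \eqref{dyu:pw'-cubic}, I would rerun the argument with $Z^a=\p_y$, using the stronger pointwise estimate \eqref{pwL2:KG'-cubic} in place of \eqref{pwL2:KG-cubic}. This replaces the decay exponent $0.9$ by $1$ and produces the power $|n|^{13-N}$ via the same frequency counting, since the initial-data contribution from \eqref{initial:data} gives $\ve|n|^{|a|+5-N}\lesssim \ve|n|^{15/2-N}$ when $|a|=1$ and the nonlinear contribution is $\ve_1^3|n|^{15-N}\w{\tau}^{-0.3}$ after the same case analysis as above.
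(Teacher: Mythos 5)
Your overall strategy is the same as the paper's — Fourier-transform in $y$, rescale to a unit-mass 2D Klein--Gordon equation, apply Lemma~\ref{lem:pwL2KG-cubic}, decompose the source $(\mathcal{C}^a)_n$ by how many of the three mode indices in the convolution vanish, and use $L^2$--$L^\infty$ splittings together with the $\Z$-summability coming from $N\ge25$ — and your observation that the cubic structure must compensate for the weaker $\w{t}^{-1/2}$ wave decay on $\R^2$ is exactly the right key insight. However, your treatment of \eqref{dyu:pw'-cubic} contains a numerology error that keeps the proof from closing: the exponents you quote are the \emph{3D} ones from Lemma~\ref{lem:pw:nonzero}, not the 2D cubic ones. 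For $|a|=1$ the data contribution is $\ve|n|^{|a|+5-N}=\ve|n|^{6-N}$ (not $\ve|n|^{15/2-N}$), and the reestimated source bound is $\ve_1^3|n|^{12-N}\w{\tau}^{-0.4+\ve_2}$ (not $\ve_1^3|n|^{15-N}\w{\tau}^{-0.3}$): the $L^\infty$ non-zero-mode factor is $(\p\p^{\le1}Z^bu)_n$ with $|b|\le5$, so \eqref{bootstrap-cubic} gives $|n|^{(|b|+2)+5-N}=|n|^{|b|+7-N}\le|n|^{12-N}$; by contrast the 3D bootstrap has offset $+7$ and uses $|b|\le7$, which is where $|n|^{15-N}$ comes from. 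Since passing from \eqref{pwL2:KG-cubic} to \eqref{pwL2:KG'-cubic} produces a rescaling prefactor $|n|^{1/10}$ and costs $\w{\tau}^{+1/10}$ in the sup, the correct numbers yield $\ve_1^3|n|^{12.1-N}\ls|n|^{13-N}$, whereas your $|n|^{15-N}$ would yield $\ve_1^3|n|^{15.1-N}$, which is strictly larger than $|n|^{13-N}$ and does not give the lemma as stated.

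A secondary imprecision: the $L^2$--$L^\infty$ allocation ``put the highest-order factor in $L^2$, the other two in $L^\infty$'' is not quite the right rule when the non-zero mode factor is not the highest. For example in the regime $J_n^{bcd}$ (one non-zero mode $b$, two zero modes $c,d$), if $|c|$ is highest while $|b|$ is comparable, placing the zero mode $c$ in $L^2$ forces $(\p\p^{\le1}Z^bu)_n$ into $L^\infty$ and contributes $|n|^{|b|+7-N}$; since $|b|$ can be as large as $(|a|+4)/2$ this exceeds the bootstrap target $|n|^{|a|+5-N}$ for $|a|\le7$. The paper avoids this by choosing the allocation according to which factor carries the large $|n|$ and how large its derivative index is (cases i-a), i-b), ii), iii) in the proof), and by using the $\min$-over-splittings device (as in \eqref{pw:nonzero-cubic-005}) for small $|a|$ — that device is needed for the $|n|$-power accounting, not merely for the $\Z$-summability. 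Your parenthetical claims ``index at least $|a|-3$'' and ``at most seven derivatives'' also fail in general (e.g.\ $|b|\approx|c|\approx|d|\approx(|a|+4)/3$ for $|a|$ near $N-7$); what one actually needs, and what the paper uses, is the combinatorial fact that since $|b|+|c|+|d|\le|a|+4\le N-3$ and $N\ge25$, at most one of $|b|,|c|,|d|$ can exceed $N-12$.
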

\begin{proof}
Taking the Fourier transform on the both sides of \eqref{eqn:high-cubic} with respect to $y$ to obtain
\begin{equation}\label{pw:nonzero1-cubic}
(\Box_{t,x}+|n|^2)(Z^au)_n(t,x)=(\mathcal{C}^a)_n(t,x).
\end{equation}
Let $(s,z)=(|n|t,|n|x)$, $\Box_{s,z}=\p_s^2-\Delta_z$ and $(\overline{Z^a u})_n(s,z):=(Z^au)_n(\frac{s}{|n|},\frac{z}{|n|})$.
Then \eqref{pw:nonzero1-cubic} is changed to
\begin{equation*}
|n|^2(\Box_{s,z}+1)(\overline{Z^au})_n(s,z)=(\overline{\mathcal{C}^a})_n(s,z):=(\mathcal{C}^a)_n(\frac{s}{|n|},\frac{z}{|n|}).
\end{equation*}
Therefore, it is derived from \eqref{pwL2:KG-cubic} that
\begin{equation*}
\begin{split}
\w{s+|z|}^{0.9}|(\overline{Z^au})_n(s,z)|
&\ls |n|^{-2}\sum_{|a'|\le 4}\sum_{\tilde\Gamma\in\{\p_{\tau,z'},L,\Omega\}}
\sup_{\tau\in[0,s]}\w{\tau}^{-1/10}\|\w{\tau+|z'|}\tilde\Gamma^{a'}(\overline{\mathcal{C}^a})_n(\tau,z')\|_{L_{z'}^2(\R^2)}\\
&\quad+|n|^{-2}\sum_{|a'|\le 5}\sum_{\tilde\Gamma\in\{\p_{\tau,z'},L,\Omega\}}
\|\w{z'}^2\tilde\Gamma^{a'}(\overline{Z^au})_n(0,z')\|_{L_{z'}^2(\R^2)},
\end{split}
\end{equation*}
where $\tilde\Gamma'$s are the vector fields in $(\tau,z')$ variables.
As in Lemma \ref{lem:pw:nonzero}, when returning to the $(t,x)$ variables, we have
\begin{equation}\label{pw:nonzero2-cubic}
\begin{split}
\w{t+|x|}^{0.9}|(Z^au)_n(t,x)|&\ls \sum_{|a'|\le 4}
\sup_{\tau\in[0,t]}\w{\tau}^{-1/10}\|\w{\tau+|x|}\Gamma^{a'}(\mathcal{C}^a)_n(\tau, x)\|_{L^2_x(\R^2)}\\
&\quad+|n|\sum_{|a'|\le 5}\|\w{x}^2\Gamma^{a'}(Z^au)_n(0,x)\|_{L^2_x(\R^2)}.
\end{split}
\end{equation}
In addition, the definition of $\mathcal{C}^a$ in \eqref{eqn:high-cubic} leads to
\begin{equation}\label{pw:nonzero3-cubic}
\sum_{|a'|\le 4}\|\w{\tau+|x|}\Gamma^{a'}(\mathcal{C}^a)_n\|_{L_x^2}
\ls \sum_{|b|+|c|+|d|\le |a|+4}\Big(J_n^{bcd}+\sum_{m\in\Z_*,m\neq n}J_{nm}^{bcd}+
\sum_{m,l\in\Z_*, l\neq n-m}J_{nml}^{bcd}\Big),
\end{equation}
where
\begin{equation*}
\begin{split}
J_n^{bcd}&:=\|\w{\tau+|x|}(\p\p^{\le1}Z^bu)_n(P_{=0}\p\p^{\le1}Z^cu)(P_{=0}\p\p^{\le1}Z^du)\|_{L_x^2},\\
J_{nm}^{bcd}&:=\|\w{\tau+|x|}(\p\p^{\le1}Z^bu)_{n-m}(\p\p^{\le1}Z^cu)_m(P_{=0}\p\p^{\le1}Z^du)\|_{L_x^2},\\
J_{nml}^{bcd}&:=\|\w{\tau+|x|}(\p\p^{\le1}Z^bu)_{n-m-l}(\p\p^{\le1}Z^cu)_m(\p\p^{\le1}Z^du)_l\|_{L_x^2}.
\end{split}
\end{equation*}

\emph{ i) Estimate of~~$\sum_{|b|+|c|+|d|\le |a|+4}J_n^{bcd}$}

\vskip 0.2 true cm

\emph{ i-a)} For $J_n^{bcd}$ with $|b|+|c|\le |a|-5$, the assumptions \eqref{bootstrap-cubic} and the estimate \eqref{pw:zero'-cubic} imply
\begin{equation}\label{pw:nonzero-cubic-00001}
\begin{split}
\sum_{\substack{|b|+|c|+|d|\le |a|+4,\\|b|+|c|\le|a|-5}}J_n^{bcd}
&\ls\sum_{\substack{|b|+|c|+|d|\le|a|+4,\\|b|+|c|\le|a|-5}}
\|\w{\tau+|x|}(\p\p^{\le1}Z^bu)_n(P_{=0}\p\p^{\le1}Z^cu)\|_{L^\infty}\|P_{=0}\p\p^{\le1}Z^du\|_{L_x^2}\\
&\ls\sum_{|b|\le|a|-5}\ve_1^2|n|^{|b|+2+5-N}\w{\tau}^{-0.4}\ve_1\w{\tau}^{\ve_2}
\ls \ve_1^3|n|^{|a|+2-N}\w{\tau}^{-0.4+\ve_2}.
\end{split}
\end{equation}

\emph{ i-b)} For $J_n^{bcd}$ with $|b|+|c|\ge|a|-4$, we see that $|d|\le 8 \le N-12$. Then \eqref{pw:zero'-cubic} ensures that
\begin{equation}\label{pw:nonzero-cubic-1}
\begin{split}
&\sum_{\substack{|b|+|c|+|d|\le|a|+4,\\|a|-4\le|b|+|c|\le|a|+4}}J_n^{bcd}\\
&\ls\sum_{\substack{|b|+|c|+|d|\le|a|+4,\\|a|-4\le|b|+|c|\le|a|+4}}
\|\w{\tau+|x|}^{1/2}(\p\p^{\le1}Z^bu)_n(P_{=0}\p\p^{\le1}Z^cu)\|_{L_x^2}\|\w{\tau+|x|}^{1/2}P_{=0}\p\p^{\le1}Z^du\|_{L^\infty}\\
&\ls \ve_1\sum_{\substack{|a|-4\le|b|+|c|\le|a|+4}}
\|\w{\tau+|x|}^{1/2}(\p\p^{\le1}Z^bu)_n(P_{=0}\p\p^{\le1}Z^cu)\|_{L_x^2}.
\end{split}
\end{equation}
When $|b|\leq |a|-2$, it follows from \eqref{bootstrap-cubic} that
\begin{equation}\label{pw:nonzero-cubic-2}
\begin{split}
&\sum_{\substack{|a|-4\le|b|+|c|\le|a|+4,\\|b|\leq |a|-2}}
\|\w{\tau+|x|}^{1/2}(\p\p^{\le1}Z^bu)_n(P_{=0}\p\p^{\le1}Z^cu)\|_{L_x^2}\\
&\ls \sum_{\substack{|a|-4\le|b|+|c|\le|a|+4,\\|b|\leq |a|-2}}
\|\w{\tau+|x|}^{1/2}(\p\p^{\le1}Z^bu)_n\|_{L^\infty}\|P_{=0}\p\p^{\le1}Z^cu\|_{L_x^2}\\
&\ls \sum_{\substack{|b|\leq |a|-2}}\|\w{\tau+|x|}^{1/2}(\p\p^{\le1}Z^bu)_n\|_{L^\infty}E_N(\tau)\\
&\ls \sum_{\substack{|b|\leq |a|-2}}\ve_1^2|n|^{|b|+2+5-N}\w{\tau}^{-0.4+\ve_2}
\ls \ve_1^2|n|^{|a|+5-N}\w{\tau}^{-0.4+\ve_2}.
\end{split}
\end{equation}
Collecting \eqref{pw:nonzero-cubic-1} and \eqref{pw:nonzero-cubic-2} derives
\begin{equation}\label{pw:nonzero4-cubic}
\sum_{\substack{|b|+|c|+|d|\le|a|+4,\\ |b|+|c|\geq|a|-4,|b|\le|a|-2}}J_n^{bcd}
\ls \ve_1^3 |n|^{|a|+5-N}\w{\tau}^{-0.4+\ve_2}.
\end{equation}
When $|b|\geq |a|-1$, we have $|c|+1\leq 1+|a|+4-|b|\leq 6\le N-11$. Then it follows from \eqref{bootstrap'} and \eqref{pw:zero'-cubic} that
\begin{equation}\label{pw:nonzero-cubic-2-dd}
\begin{split}
&\sum_{\substack{|a|-4\le|b|+|c|\le|a|+4,\\|b|\geq |a|-1}}
\|\w{\tau+|x|}^{1/2}(\p\p^{\le1}Z^bu)_n(P_{=0}\p\p^{\le1}Z^cu)\|_{L_x^2}\\
&\ls \sum_{\substack{|a|-4\le|b|+|c|\le|a|+4,\\|b|\geq |a|-1}}
\|(\p\p^{\le1}Z^bu)_n\|_{L_x^2}\|\w{\tau+|x|}^{1/2}(P_{=0}\p\p^{\le1}Z^cu)\|_{L^\infty}\\
&\ls \ve_1^2|n|^{|b|+1-N}\w{\tau}^{\ve_2}\ls \ve_1^2|n|^{|a|+5-N}\w{\tau}^{\ve_2}.
\end{split}
\end{equation}
In view of \eqref{pw:nonzero-cubic-1} and \eqref{pw:nonzero-cubic-2-dd}, one has
\begin{equation}\label{pw:nonzero4-cubic-oth}
\sum_{\substack{|b|+|c|+|d|\le|a|+4,\\ |b|+|c|\geq|a|-4,|b|\geq|a|-1}}J_n^{bcd}
\ls \ve_1^3|n|^{|a|+5-N}\w{\tau}^{\ve_2}.
\end{equation}
Collecting \eqref{pw:nonzero-cubic-00001},\eqref{pw:nonzero4-cubic} and \eqref{pw:nonzero4-cubic-oth} yields
\begin{equation}\label{pw:nonzero-cubic-aaa}
\sum_{|b|+|c|+|d|\le|a|+4}J_n^{bcd}\ls \ve_1^3|n|^{|a|+5-N}\w{\tau}^{\ve_2}.
\end{equation}

\emph{ ii) Estimate of~~$\sum_{m\in\Z_*,m\neq n}\sum_{|b|+|c|+|d|\le |a|+4}J_{nm}^{bcd}$}

\vskip 0.2 true cm

\emph{ ii-a)} In the case of $|b|+|c|\leq |a|-2$. By \eqref{bootstrap-cubic}, we have
\begin{equation}\label{pw:nonzero-cubic-001}
\begin{split}
\sum_{\substack{|b|+|c|+|d|\le|a|+4,\\|b|+|c|\leq |a|-2}}J_{nm}^{bcd}
&\ls\sum_{\substack{|b|+|c|+|d|\le|a|+4,\\|b|+|c|\leq |a|-2}}
\|\w{\tau+|x|}(\p\p^{\le1}Z^bu)_{n-m}((\p\p^{\le1}Z^cu)_m)\|_{L^\infty}\|P_{=0}\p\p^{\le1}Z^du\|_{L_x^2}\\
&\ls\sum_{\substack{|b|+|c|\leq |a|-2}}\|\w{\tau+|x|}(\p\p^{\le1}Z^bu)_{n-m}((\p\p^{\le1}Z^cu)_m)\|_{L^\infty}E_N(\tau)\\
&\ls\sum_{\substack{|b|+|c|\leq |a|-2}}\ve_1^3|n-m|^{|b|+2+5-N}|m|^{|c|+2+5-N}\w{\tau}^{-0.8+\ve_2}\\
&\ls\ve_1^3|n-m|^{|a|+5-N}|m|^{|a|+5-N}\w{\tau}^{-0.8+\ve_2}.
\end{split}
\end{equation}

\emph{ ii-b)} In the case of $|b|+|c|\geq |a|-1$. Then $|d|\leq |a|+4-(|b|+|c|)\leq 5\leq N-12$ holds.
Hence, by \eqref{pw:zero'-cubic} we get
\begin{equation}\label{pw:nonzero-cubic-002}
\begin{split}
&\sum_{\substack{|b|+|c|+|d|\le|a|+4,\\|b|+|c|\geq |a|-1}}J_{nm}^{bcd}\\
&\ls\sum_{\substack{|b|+|c|+|d|\le|a|+4,\\|b|+|c|\geq |a|-1}}
\|\w{\tau+|x|}^{1/2}(\p\p^{\le1}Z^bu)_{n-m}((\p\p^{\le1}Z^cu)_m)\|_{L_x^2}\|\w{\tau+|x|}^{1/2}(P_{=0}\p\p^{\le1}Z^du)\|_{L^\infty}\\
&\ls\ve_1\sum_{\substack{|a|-1\leq|b|+|c|\leq |a|+4}}\underbrace{\|\w{\tau+|x|}^{1/2}(\p\p^{\le1}Z^bu)_{n-m}(\p\p^{\le1}Z^cu)_m\|_{L_x^2}}_{:=W^{bc}_{nm}(\tau)}.
\end{split}
\end{equation}

We now treat $W^{bc}_{nm}(\tau)$ in the following three cases.

\emph{ ii-b-1) Estimate of $W_{nm}(\tau)$ when $|a|-1\leq|b|+|c|\leq |a|+4, |b|\leq |a|-2$.}

We see from \eqref{bootstrap-cubic} and \eqref{bootstrap'} that
\begin{equation}\label{pw:nonzero-cubic-003}
\begin{split}
&\sum_{\substack{|a|-1\leq|b|+|c|\leq |a|+4,\\|b|\leq |a|-2}}W^{bc}_{nm}(\tau)\\
&\ls \sum_{\substack{|a|-1\leq|b|+|c|\leq |a|+4,\\|b|\leq |a|-2}}\|\w{\tau+|x|}^{1/2}(\p\p^{\le1}Z^bu)_{n-m}\|_{L^\infty}\|(\p\p^{\le1}Z^cu)_m\|_{L_x^2}\\
&\ls \sum_{\substack{|b|\leq |a|-2,|c|\leq |a|+4}}\ve_1\w{\tau}^{-0.4}|n-m|^{|b|+2+5-N}\cdot \ve_1\w{\tau}^{\ve_2}|m|^{|c|+1-N}\\
&\ls \ve_1^2\w{\tau}^{-0.4+\ve_2}|n-m|^{|a|+5-N}|m|^{|a|+5-N}.
\end{split}
\end{equation}

\emph{ ii-b-2) Estimate of $W_{nm}(\tau)$ when $|a|-1\leq|b|+|c|\leq |a|+4, |b|\geq |a|-1, |a|\geq 7$.}

~In this case, then $|c|\leq |a|+4-|b|\leq 5\leq |a|-2$ holds. We can conclude from \eqref{bootstrap-cubic} and \eqref{bootstrap'} that
\begin{equation}\label{pw:nonzero-cubic-004}
\begin{split}
&\sum_{\substack{|a|-1\leq|b|+|c|\leq |a|+4,\\|b|\geq |a|-1, |a|\geq 7}}W^{bc}_{nm}(\tau)\\
&\ls \sum_{\substack{|a|-1\leq|b|+|c|\leq |a|+4,\\|c|\leq |a|-2}}\|\w{\tau+|x|}^{1/2}(\p\p^{\le1}Z^cu)_m\|_{L^\infty}\|(\p\p^{\le1}Z^bu)_{n-m}\|_{L_x^2}\\
&\ls \sum_{\substack{|c|\leq |a|-2,|b|\leq |a|+4}}\ve_1\w{\tau}^{-0.4}|m|^{|c|+2+5-N}\cdot \ve_1\w{\tau}^{\ve_2}|n-m|^{|b|+1-N}\\
&\ls \ve_1^2\w{\tau}^{-0.4+\ve_2}|m|^{|a|+5-N}|n-m|^{|a|+5-N}.
\end{split}
\end{equation}

\emph{ ii-b-3) Estimate of $W_{nm}(\tau)$ when $|a|-1\leq|b|+|c|\leq |a|+4, |b|\geq |a|-1, |a|\leq 6$.}

Note that in this case,  $|b|,|c|\leq 10\leq N-9$ holds. Utilizing \eqref{bootstrap-cubic} and \eqref{bootstrap'} again,
one can easily obtain that
\begin{equation}\label{pw:nonzero-cubic-005}
\begin{split}
&\sum_{\substack{|a|-1\leq|b|+|c|\leq |a|+4,\\|b|\geq |a|-1, |a|\leq 6}}W^{bc}_{nm}(\tau)\\
&\ls \sum_{\substack{|b|+|c|\leq |a|+4,\\|b|,|c|\leq 10}}\mathrm{min}\{\|\w{\tau+|x|}^{1/2}(\p\p^{\le1}Z^bu)_{n-m}\|_{L^\infty}\|(\p\p^{\le1}Z^cu)_m\|_{L_x^2},\\
&\quad\quad\quad\quad\quad\quad\quad\quad\quad\|\w{\tau+|x|}^{1/2}(\p\p^{\le1}Z^cu)_m\|_{L^\infty}\|(\p\p^{\le1}Z^bu)_{n-m}\|_{L_x^2}\}\\
&\ls \sum_{\substack{|b|+|c|\leq |a|+4,\\|b|,|c|\leq 10}}\mathrm{min}\{\ve_1\w{\tau}^{-0.4}|n-m|^{|b|+2+5-N}\cdot \ve_1\w{\tau}^{\ve_2}|m|^{|c|+1-N},\\
&\quad\quad\quad\quad\quad\quad\quad\ve_1\w{\tau}^{-0.4}|m|^{|c|+2+5-N}\cdot \ve_1\w{\tau}^{\ve_2}|n-m|^{|b|+1-N}\}\\
&\ls \sum_{\substack{|b|+|c|\leq |a|+4,\\|b|,|c|\leq 10}}
\ve_1^2\w{\tau}^{-0.4+\ve_2}\mathrm{min}\{|n-m|^{|b|+7-N}|m|^{|c|+1-N},|n-m|^{|b|+1-N}|m|^{|c|+7-N}\}\\
&\ls \ve_1^2\w{\tau}^{-0.4+\ve_2}\mathrm{min}\{|n-m|^{17-N}|m|^{|a|+5-N},|n-m|^{|a|+5-N}|m|^{17-N}\}\\
&\ls \ve_1^2\w{\tau}^{-0.4+\ve_2}\mathrm{min}\{|n-m|^{-2}|m|^{|a|+5-N},|n-m|^{|a|+5-N}|m|^{-2}\}.
\end{split}
\end{equation}
By \eqref{pw:nonzero-cubic-002} and \eqref{pw:nonzero-cubic-005}, we arrive at
\begin{equation}\label{pw:nonzero-cubic-006}
\begin{split}
&\sum_{m\in\Z_*,m\neq n}\sum_{\substack{|b|+|c|+|d|\le|a|+4,\\|b|+|c|\geq |a|-1,\\|b|\geq |a|-1, |a|\leq 6}}J_{nm}^{bcd}\\
&\ls \ve_1^3\w{\tau}^{-0.4+\ve_2}\sum_{m\in\Z_*,m\neq n}\mathrm{min}\{|n-m|^{-2}|m|^{|a|+5-N},|n-m|^{|a|+5-N}|m|^{-2}\}\\
&\ls \sum_{m\in\Z_*,m\neq n,|m|\leq |n|/2}\ve_1^3\w{\tau}^{-0.4+\ve_2}|n-m|^{|a|+5-N}|m|^{-2}\\
&\quad\quad\quad+\sum_{m\in\Z_*,m\neq n,|m|\geq |n|/2}\ve_1^3\w{\tau}^{-0.4+\ve_2}|n-m|^{-2}|m|^{|a|+5-N}\\
&\ls \ve_1^3\w{\tau}^{-0.4+\ve_2}|n|^{|a|+5-N}\sum_{m\in\Z_*,m\neq n,|m|\leq |n|/2}|m|^{-2}\\
&\quad\quad+\ve_1^3\w{\tau}^{-0.4+\ve_2}|n|^{|a|+5-N}\sum_{m\in\Z_*,m\neq n,|m|\geq |n|/2}|n-m|^{-2}\\
&\ls \ve_1^3\w{\tau}^{-0.4+\ve_2}|n|^{|a|+5-N}
\end{split}
\end{equation}
Collecting \eqref{pw:nonzero-cubic-001}-\eqref{pw:nonzero-cubic-004} yields that
\begin{equation*}\label{pw:nonzero-cubic-007}
\begin{split}
&\sum_{m\in\Z_*,m\neq n}\sum_{\substack{|b|+|c|+|d|\le|a|+4,\\ 7\leq |a|\leq N-7}}J_{nm}^{bcd}\\
\ls& \sum_{m\in\Z_*,m\neq n}\ve_1^3\w{\tau}^{-0.4+\ve_2}|n-m|^{|a|+5-N}|m|^{|a|+5-N}\\
\end{split}
\end{equation*}

\begin{equation}\label{pw:nonzero-cubic-007}
\begin{split}
\ls& \sum_{m\in\Z_*,m\neq n,|m|\leq |n|/2}\ve_1^3\w{\tau}^{-0.4+\ve_2}|n-m|^{|a|+5-N}|m|^{|a|+5-N}\\
&\quad\quad\quad+\sum_{m\in\Z_*,m\neq n,|m|\geq |n|/2}\ve_1^3\w{\tau}^{-0.4+\ve_2}|n-m|^{|a|+5-N}|m|^{|a|+5-N}\\
\ls& \ve_1^3\w{\tau}^{-0.4+\ve_2}|n|^{|a|+5-N}\sum_{m\in\Z_*}|m|^{|a|+5-N}\\
&\quad\quad\quad+\ve_1^3\w{\tau}^{-0.4+\ve_2}|n|^{|a|+5-N}\sum_{m\in\Z_*,m\neq n}|n-m|^{|a|+5-N}\\
\ls& \ve_1^3\w{\tau}^{-0.4+\ve_2}|n|^{|a|+5-N}.
\end{split}
\end{equation}
Analogously, collecting \eqref{pw:nonzero-cubic-001}-\eqref{pw:nonzero-cubic-003} and \eqref{pw:nonzero-cubic-006} to get
\begin{equation}\label{pw:nonzero-cubic-008}
\sum_{m\in\Z_*,m\neq n}\sum_{\substack{|b|+|c|+|d|\le|a|+4,\\|a|\leq 6}}J_{nm}^{bcd}
\ls \ve_1^3\w{\tau}^{-0.4+\ve_2}|n|^{|a|+5-N}.
\end{equation}
By \eqref{pw:nonzero-cubic-007} and \eqref{pw:nonzero-cubic-008}, we have
\begin{equation}\label{pw:nonzero-cubic-008-add}
\sum_{m\in\Z_*,m\neq n}\sum_{|b|+|c|+|d|\le|a|+4}J_{nm}^{bcd}
\ls \ve_1^3\w{\tau}^{-0.4+\ve_2}|n|^{|a|+5-N}.
\end{equation}

\emph{ iii) Estimate of~~$\sum_{m,l\in\Z_*,l\neq n-m}\sum_{|b|+|c|+|d|\le |a|+4}J_{nml}^{bcd}$}

\vskip 0.2 true cm

\emph{ iii-a)} In the case of $|a|\geq [(N-7)/2]+4$. At this time, at least two of the three multiple indexes $b, c$ and $d$
in the above summation meet the fact that the sum of each index is less than or equal to $|a|-2$. Otherwise, $|b|+|c|+|d|\geq 2(|a|-1)\geq 2[(N-7)/2]+6>2((N-7)/2-1)+6=N-3$, which contradicts with $|b|+|c|+|d|\leq |a|+4\leq N-3$.
Without loss of generality, we can assume that $|b|\leq |a|-2$ and $|c|\leq |a|-2$. Then it is concluded
from \eqref{bootstrap-cubic} and \eqref{bootstrap'} that
\begin{equation*}\label{pw:nonzero-cubic-009}
\begin{split}
&\sum_{\substack{|b|+|c|+|d|\le|a|+4,\\|b|\leq |a|-2,|c|\leq |a|-2}}J_{nml}^{bcd}\\
&\ls\sum_{\substack{|b|+|c|+|d|\le|a|+4,\\|b|\leq |a|-2,|c|\leq |a|-2}}
\|\w{\tau+|x|}(\p\p^{\le1}Z^bu)_{n-m-l}(\p\p^{\le1}Z^cu)_m\|_{L^\infty}\|(\p\p^{\le1}Z^d u)_l\|_{L_x^2}\\
&\ls \sum_{\substack{|d|\le|a|+4,\\|b|\leq |a|-2,|c|\leq |a|-2}}
\ve_1^3|n-m-l|^{|b|+2+5-N}|m|^{|c|+2+5-N}|l|^{|d|+1-N}\w{\tau}^{-0.8+\ve_2}\\
&\ls\ve_1^3|n-m-l|^{|a|+5-N}|m|^{|a|+5-N}|l|^{|a|+5-N}\w{\tau}^{-0.8+\ve_2}.
\end{split}
\end{equation*}
Hence,
\begin{equation*}\label{pw:nonzero-cubic-0010}
\begin{split}
&\sum_{m,l\in\Z_*,l\neq n-m}\sum_{\substack{|b|+|c|+|d|\le|a|+4,\\|b|\leq |a|-2,|c|\leq |a|-2}}J_{nml}^{bcd}\\
&\ls\sum_{m,l\in\Z_*,l\neq n-m}\ve_1^3\w{\tau}^{-0.8+\ve_2}|n-m-l|^{|a|+5-N}|m|^{|a|+5-N}|l|^{|a|+5-N}\\
\end{split}
\end{equation*}

\begin{equation}\label{pw:nonzero-cubic-0010}
\begin{split}
&\ls \sum_{\substack{m,l\in\Z_*,l\neq n-m,\\|m|\leq |n|/4~\mathrm{and}~|l|\leq|n|/4}}
\ve_1^3\w{\tau}^{-0.8+\ve_2}|n-m-l|^{|a|+5-N}|m|^{|a|+5-N}|l|^{|a|+5-N}\\
&\quad\quad\quad+\sum_{\substack{m,l\in\Z_*,l\neq n-m,\\|m|\geq |n|/4~\mathrm{or}~|l|\geq |n|/4}}
\ve_1^3\w{\tau}^{-0.8+\ve_2}|n-m-l|^{|a|+5-N}|m|^{|a|+5-N}|l|^{|a|+5-N}\\
&\ls \ve_1^3\w{\tau}^{-0.8+\ve_2}|n|^{|a|+5-N}\sum_{\substack{m,l\in\Z_*}}
|m|^{|a|+5-N}|l|^{|a|+5-N}\\
&\quad\quad\quad+\ve_1^3\w{\tau}^{-0.8+\ve_2}|n|^{|a|+5-N}\sum_{\substack{m,l\in\Z_*,l\neq n-m}}
|n-m-l|^{|a|+5-N}|l|^{|a|+5-N}\\
&\ls \ve_1^3\w{\tau}^{-0.8+\ve_2}|n|^{|a|+5-N}+\ve_1^3\w{\tau}^{-0.8+\ve_2}|n|^{|a|+5-N}
\sum_{l\in\Z_*}|l|^{-2}\Big(\sum_{m\in\Z_*,m\neq n-l}|n-l-m|^{-2}\Big)\\
&\ls \ve_1^3\w{\tau}^{-0.8+\ve_2}|n|^{|a|+5-N}.
\end{split}
\end{equation}

\emph{ iii-b)} In the case of $|a|\leq [(N-7)/2]+3$. At this time, one has
that $|b|,|c|,|d|\leq |a|+4\leq [(N-7)/2]+7\leq N/2-7/2+7\leq N-9$~($\text{due to}$~$N\geq 25$).
By \eqref{bootstrap-cubic} and \eqref{bootstrap'}, we arrive at
\begin{equation*}\label{pw:nonzero-cubic-0011}
\begin{split}
&\sum_{\substack{|b|+|c|+|d|\le|a|+4,\\|a|\leq [(N-7)/2]+3}}J_{nml}^{bcd}\\
&\ls\sum_{\substack{|b|+|c|+|d|\le|a|+4,\\|b|,|c|,|d|\leq N-9}}
\mathrm{min}\big\{\|\w{\tau+|x|}(\p\p^{\le1}Z^cu)_m(\p\p^{\le1}Z^d u)_l\|_{L^\infty}\|(\p\p^{\le1}Z^bu)_{n-m-l}\|_{L_x^2},\\
&\quad\quad\quad\quad\quad\quad\|\w{\tau+|x|}(\p\p^{\le1}Z^bu)_{n-m-l}(\p\p^{\le1}Z^d u)_l\|_{L^\infty}\|(\p\p^{\le1}Z^cu)_m\|_{L_x^2},\\
&\quad\quad\quad\quad\quad\quad\|\w{\tau+|x|}(\p\p^{\le1}Z^bu)_{n-m-l}(\p\p^{\le1}Z^cu)_m\|_{L^\infty}\|(\p\p^{\le1}Z^d u)_l\|_{L_x^2}\big\}\\
&\ls \sum_{\substack{|b|+|c|+|d|\le|a|+4,\\|b|,|c|,|d|\leq N-9}}
\mathrm{min}\big\{\ve_1^3|m|^{|c|+2+5-N}|l|^{|d|+2+5-N}|n-m-l|^{|b|+1-N}\w{\tau}^{-0.8+\ve_2},\\
&\quad\quad\quad\quad\quad\quad\ve_1^3|n-m-l|^{|b|+2+5-N}|l|^{|d|+2+5-N}|m|^{|c|+1-N}\w{\tau}^{-0.8+\ve_2},\\
&\quad\quad\quad\quad\quad\quad\ve_1^3|n-m-l|^{|b|+2+5-N}|m|^{|c|+2+5-N}|l|^{|d|+1-N}\w{\tau}^{-0.8+\ve_2}\}\\
&\ls \ve_1^3\w{\tau}^{-0.8+\ve_2}\mathrm{min}\{|m|^{-2}|l|^{-2}|n-m-l|^{|a|+5-N},|n-m-l|^{-2}|l|^{-2}|m|^{|a|+5-N},\\
&\quad\quad\quad\quad\quad\quad\quad\quad\quad|n-m-l|^{-2}|m|^{-2}|l|^{|a|+5-N}\big\}.
\end{split}
\end{equation*}
Hence,
\begin{equation*}\label{pw:nonzero-cubic-0012}
\begin{split}
&\sum_{m,l\in\Z_*,l\neq n-m}\sum_{\substack{|b|+|c|+|d|\le|a|+4,\\|a|\leq [(N-7)/2]+3}}J_{nml}^{bcd}\\
&\ls \ve_1^3\w{\tau}^{-0.8+\ve_2}\mathrm{min}\big\{|m|^{-2}|l|^{-2}|n-m-l|^{|a|+5-N},|n-m-l|^{-2}|l|^{-2}|m|^{|a|+5-N},\\
&\quad\quad\quad\quad\quad\quad\quad\quad\quad|n-m-l|^{-2}|m|^{-2}|l|^{|a|+5-N}\}\\
\end{split}
\end{equation*}

\begin{equation}\label{pw:nonzero-cubic-0012}
\begin{split}
&\ls \sum_{\substack{m,l\in\Z_*,l\neq n-m,\\|m|\leq |n|/4~\mathrm{and}~|l|\leq|n|/4}}
\ve_1^3\w{\tau}^{-0.8+\ve_2}|m|^{-2}|l|^{-2}|n-m-l|^{|a|+5-N}\\
&\quad\quad\quad+\sum_{\substack{m,l\in\Z_*,l\neq n-m,\\|m|\geq |n|/4~\mathrm{or}~|l|\geq |n|/4}}
\ve_1^3\w{\tau}^{-0.8+\ve_2}\mathrm{min}\{|n-m-l|^{-2}|l|^{-2}|m|^{|a|+5-N},\\
&\quad\quad\quad\quad\quad\quad\quad\quad\quad\quad\quad\quad\quad\quad\quad\quad\quad
\quad\quad|n-m-l|^{-2}|m|^{-2}|l|^{|a|+5-N}\big\}\\
&\ls \ve_1^3\w{\tau}^{-0.8+\ve_2}|n|^{|a|+5-N}\sum_{\substack{m,l\in\Z_*}}|m|^{-2}|l|^{-2}\\
&\quad\quad\quad+\ve_1^3\w{\tau}^{-0.8+\ve_2}|n|^{|a|+5-N}\sum_{\substack{m,l\in\Z_*,l\neq n-m}}|n-m-l|^{-2}|l|^{-2}\\
&\ls \ve_1^3\w{\tau}^{-0.8+\ve_2}|n|^{|a|+5-N}+\ve_1^3\w{\tau}^{-0.8+\ve_2}|n|^{|a|+5-N}\\
&\quad\quad\quad\cdot\sum_{l\in\Z_*}|l|^{-2}\Big(\sum_{m\in\Z_*,m\neq n-l}|n-l-m|^{-2}\Big)\\
&\ls \ve_1^3\w{\tau}^{-0.8+\ve_2}|n|^{|a|+5-N}.
\end{split}
\end{equation}
By \eqref{pw:nonzero-cubic-0010}-\eqref{pw:nonzero-cubic-0012}, one has
\begin{equation}\label{pw:nonzero-cubic-0013}
\sum_{m,l\in\Z_*,l\neq n-m}\sum_{|b|+|c|+|d|\le|a|+4}J_{nml}^{bcd}\ls \ve_1^3\w{\tau}^{-0.8+\ve_2}|n|^{|a|+5-N}.
\end{equation}
Inserting \eqref{pw:nonzero-cubic-aaa}, \eqref{pw:nonzero-cubic-008-add} and \eqref{pw:nonzero-cubic-0013} into \eqref{pw:nonzero3-cubic}
derives
\begin{equation}\label{pw:nonzero3-cubic-final}
\sum_{|a'|\le 4}\|\w{\tau+|x|}\Gamma^{a'}(\mathcal{C}^a)_n\|_{L_x^2}\ls \ve_1^3\w{\tau}^{\ve_2}|n|^{|a|+5-N}.
\end{equation}
For the second line of \eqref{pw:nonzero2-cubic}, \eqref{initial:data} with $d=2$ leads to
\begin{equation}\label{pw:nonzero10-cubic}
|n|\sum_{|a'|\le 5}\|\w{x}^2\Gamma^{a'}(Z^au)_n(0,x)\|_{L^2_x(\R^2)}\ls \ve |n|^{|a|+5-N}.
\end{equation}
Substituting \eqref{pw:nonzero3-cubic-final}-\eqref{pw:nonzero10-cubic} into \eqref{pw:nonzero2-cubic} yields \eqref{pw:nonzero-cubic}.

Finally, we give the proof of \eqref{dyu:pw'-cubic}.
Let $Z^a=\p_y$ and repeat the steps to obtain \eqref{pw:nonzero2-cubic} with \eqref{pwL2:KG'-cubic} instead of \eqref{pwL2:KG-cubic}.
Then one has
\begin{equation}\label{pw:nonzero11-cubic-11}
\begin{split}
\w{t+|x|}|(\p_y u)_n(t,x)|&\ls |n|^{1/10}\sum_{|a'|\le 4}
\sup_{\tau\in[0,t]}\w{\tau}^{1/10}\|\w{\tau+|x|}\Gamma^{a'}(\p_y \mathcal{C}(\p u,\p^2u))_n(\tau,x)\|_{L^2_x(\R^2)}\\
&\quad+|n|\sum_{|a'|\le 5}\|\w{x}^2\Gamma^{a'}(\p_y u)_n(0,x)\|_{L^2_x(\R^2)}\\
&\ls |n|^{1/10}\sum_{|a'|\le 4}
\sup_{\tau\in[0,t]}\w{\tau}^{1/10}\|\w{\tau+|x|}\Gamma^{a'}(\p_y \mathcal{C}(\p u,\p^2u))_n(\tau,x)\|_{L^2_x(\R^2)}\\
&\quad+\ve|n|^{6-N},
\end{split}
\end{equation}
where  \eqref{pw:nonzero10-cubic} for $|a|=1$ has been used.

In addition, analogously to the estimate \eqref{pw:nonzero3-cubic}, we have
\begin{equation}\label{pw:nonzero3-cubic-12}
\sum_{|a'|\le 4}\|\w{\tau+|x|}\Gamma^{a'}(\p_y \mathcal{C}(\p u,\p^2u))_n\|_{L_x^2}
\ls \sum_{|b|+|c|+|d|\le 5}\Big(J_n^{bcd}+\sum_{m\in\Z_*,m\neq n}J_{nm}^{bcd}+
\sum_{m,l\in\Z_*, l\neq n-m}J_{nml}^{bcd}\Big).
\end{equation}
By \eqref{bootstrap-cubic} and \eqref{pw:zero'-cubic}, one has
\begin{equation}\label{pw:nonzero3-cubic-13}
\begin{split}
\sum_{|b|+|c|+|d|\le 5}J_n^{bcd}
&\ls\sum_{\substack{|b|+|c|+|d|\le 5}}
\|\w{\tau+|x|}(\p\p^{\le1}Z^bu)_n(P_{=0}\p\p^{\le1}Z^cu)\|_{L^\infty}\|P_{=0}\p\p^{\le1}Z^du\|_{L_x^2}\\
&\ls\sum_{|b|\le 5}\ve_1^2|n|^{|b|+2+5-N}\w{\tau}^{-0.4} E_N(\tau)\ls\sum_{|b|\le 5}\ve_1^2|n|^{|b|+2+5-N}\w{\tau}^{-0.4}\ve_1\w{\tau}^{\ve_2}\\
&\ls \ve_1^3|n|^{12-N}\w{\tau}^{-0.4+\ve_2}.
\end{split}
\end{equation}
By the bootstrap assumptions in \eqref{bootstrap-cubic}, we obtain
\begin{equation*}\label{pw:nonzero3-cubic-14}
\begin{split}
&\sum_{m\in\Z_*,m\neq n}\sum_{|b|+|c|+|d|\le 5}J_{nm}^{bcd}\\
&\ls\sum_{m\in\Z_*,m\neq n}\sum_{|b|+|c|+|d|\le 5}
\|\w{\tau+|x|}(\p\p^{\le1}Z^bu)_{n-m}((\p\p^{\le1}Z^cu)_m)\|_{L^\infty}\|P_{=0}\p\p^{\le1}Z^du\|_{L_x^2}\\
\end{split}
\end{equation*}

\begin{equation}\label{pw:nonzero3-cubic-14}
\begin{split}
&\ls\sum_{m\in\Z_*,m\neq n}\sum_{|b|+|c|+|d|\le 5}\|\w{\tau+|x|}(\p\p^{\le1}Z^bu)_{n-m}((\p\p^{\le1}Z^cu)_m)\|_{L^\infty} E_N(\tau)\\
&\ls\ve_1^3\sum_{m\in\Z_*,m\neq n}\sum_{|b|+|c|\le 5}|n-m|^{|b|+2+5-N}|m|^{|c|+2+5-N}\w{\tau}^{-0.8+\ve_2}\\
&\ls\w{\tau}^{-0.8+\ve_2}\ve_1^3\sum_{m\in\Z_*,m\neq n}|n-m|^{12-N}|m|^{12-N}\\
&\ls\ve_1^3\w{\tau}^{-0.8+\ve_2}|n|^{12-N}.
\end{split}
\end{equation}
Analogously,
\begin{equation}\label{pw:nonzero3-cubic-15}
\begin{split}
&\sum_{m,l\in\Z_*, l\neq n-m}\sum_{|b|+|c|+|d|\le 5}J_{nml}^{bcd}\\
&\ls\sum_{m,l\in\Z_*, l\neq n-m}\sum_{|b|+|c|+|d|\le 5}
\|\w{\tau+|x|}(\p\p^{\le1}Z^bu)_{n-m-l}(\p\p^{\le1}Z^cu)_m\|_{L^\infty}\|(\p\p^{\le1}Z^d u)_l\|_{L_x^2}\\
&\ls \sum_{m,l\in\Z_*, l\neq n-m}\sum_{|b|+|c|+|d|\le 5}
\ve_1^3|n-m-l|^{|b|+2+5-N}|m|^{|c|+2+5-N}|l|^{|d|+1-N}\w{\tau}^{-0.8+\ve_2}\\
&\ls\ve_1^3\w{\tau}^{-0.8+\ve_2}\sum_{m,l\in\Z_*, l\neq n-m}|n-m-l|^{12-N}|m|^{12-N}|l|^{12-N}\\
&\ls\ve_1^3\w{\tau}^{-0.8+\ve_2}|n|^{12-N}.
\end{split}
\end{equation}
Collecting \eqref{pw:nonzero3-cubic-12}-\eqref{pw:nonzero3-cubic-15} derives
\begin{equation*}
\sum_{|a'|\le 4}\|\w{\tau+|x|}\Gamma^{a'}(\p_y \mathcal{C}(\p u,\p^2u))_n\|_{L_x^2}
\ls \ve_1^3|n|^{12-N}\w{\tau}^{-0.4+\ve_2}.
\end{equation*}
This, together with \eqref{pw:nonzero11-cubic-11}, yields \eqref{dyu:pw'-cubic}.
\end{proof}

\begin{corollary}
Let $u$ be the solution to \eqref{QWE-cubic}.
Under the bootstrap assumptions \eqref{bootstrap-cubic}, for any multi-index $a$ with $|a|\le N-7$, it holds that
\begin{equation}\label{pw:nonzero'-cubic}
|P_{\neq0}Z^au(t,x,y)|\ls \w{t+|x|}^{-0.9}(\ve+\ve_1^3).
\end{equation}
\end{corollary}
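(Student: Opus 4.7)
The plan is to mirror the argument used for the corresponding corollary on $\R^3\times\T$ (the one immediately following Lemma \ref{lem:pw:nonzero}), simply replacing the input pointwise bound \eqref{pw:nonzero} on the non-zero modes by its 2D analogue \eqref{pw:nonzero-cubic} from Lemma \ref{lem:pw:nonzero-cubic}. Since $P_{\neq 0}$ is the orthogonal projection onto the non-zero Fourier modes in $y$, I would begin by writing the Fourier expansion
\begin{equation*}
P_{\neq 0}Z^au(t,x,y)=\sum_{n\in\Z_*}e^{{\rm i}ny}(Z^au)_n(t,x),
\end{equation*}
and then take absolute values, using the triangle inequality on the Fourier series.

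Next, I would invoke \eqref{pw:nonzero-cubic} from Lemma \ref{lem:pw:nonzero-cubic}, which under the bootstrap assumptions \eqref{bootstrap-cubic} gives $\w{t+|x|}^{0.9}|(Z^au)_n(t,x)|\ls |n|^{|a|+5-N}(\ve+\ve_1^3)$ for every $n\in\Z_*$ and $|a|\le N-7$. Summing yields
\begin{equation*}
|P_{\neq 0}Z^au(t,x,y)|\ls \w{t+|x|}^{-0.9}(\ve+\ve_1^3)\sum_{n\in\Z_*}|n|^{|a|+5-N}.
\end{equation*}

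The remaining point is to check that the series on the right converges uniformly in the relevant range of $|a|$. For $|a|\le N-7$, the exponent $|a|+5-N\le -2$, so $\sum_{n\in\Z_*}|n|^{|a|+5-N}\le \sum_{n\in\Z_*}|n|^{-2}<\infty$, which is the exact analogue of the use of $\sum |n|^{-3/2}$ in the 3D case. This absolute convergence gives the claimed bound \eqref{pw:nonzero'-cubic}. There is no real obstacle here: the only thing to watch is that the threshold $|a|\le N-7$ is precisely what makes $|a|+5-N\le -2$ and hence makes the sum summable, which is consistent with the range declared in the statement. The proof is therefore two lines, and no new analysis beyond Lemma \ref{lem:pw:nonzero-cubic} is required.
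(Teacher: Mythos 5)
Your proof is correct and follows exactly the same route as the paper: expand $P_{\neq 0}Z^a u$ as a Fourier series in $y$, apply the pointwise bound \eqref{pw:nonzero-cubic} termwise, and use that $|a|+5-N\le -2$ for $|a|\le N-7$ so the series $\sum_{n\in\Z_*}|n|^{-2}$ converges. Nothing to add.
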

\begin{proof}
For $|a|\le N-7$, it follows from \eqref{pw:nonzero-cubic} that
\begin{equation*}
|P_{\neq0}Z^au(t,x,y)|=\Big|\sum_{n\in\Z_*}e^{{\rm i}ny}(Z^au)_n(t,x)\Big|\ls\w{t+|x|}^{-0.9}(\ve+\ve_1^3)\sum_{n\in\Z_*}|n|^{-2},
\end{equation*}
which yields \eqref{pw:nonzero'-cubic} immediately.
\end{proof}

\section{Energy estimates}\label{sect5}
\begin{lemma}
For smooth function $f(t,x), x\in \R^d$ with $d=2,3$, it holds that
\begin{equation}\label{wL2:ineq}
\begin{split}
\|\w{t-|x|}\p_{t,x}^2f\|_{L^2(\R^d)}&\ls\sum_{|a|\le1}\|\p_{t,x}\Gamma^af\|_{L^2(\R^d)}
+\|\w{t+|x|}\Box_{t,x}f\|_{L^2(\R^d)},\\
\|\w{t+|x|}\bar\p\p_{t,x}f\|_{L^2(\R^d:|x|\ge \w{t}/3)}&\ls\sum_{|a|\le1}\|\p_{t,x}\Gamma^af\|_{L^2(\R^d)}
+\|\w{t-|x|}\p_{t,x}^2f\|_{L^2(\R^d)}.
\end{split}
\end{equation}
\end{lemma}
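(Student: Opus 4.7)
Both inequalities are classical Klainerman-type pointwise estimates for the wave operator, and I would prove them by establishing pointwise algebraic identities and then integrating in $L^2$.

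For the second inequality, the core identity is
$$t\bar\p_i = L_i + \frac{x_i(t-|x|)}{|x|}\p_t, \qquad i=1,\ldots,d,$$
which follows from the definitions $L_i=x_i\p_t+t\p_i$ and $\bar\p_i=\p_i+\frac{x_i}{|x|}\p_t$. Applying it to $\p_\alpha f$ and using $[L_i,\p_\alpha]\in\operatorname{span}\{\p\}$ yields
$$t\,\bar\p_i\p_\alpha f = \p_\alpha L_i f + [L_i,\p_\alpha]f + \frac{x_i(t-|x|)}{|x|}\p_t\p_\alpha f.$$
In the subregion of $\{|x|\ge\w{t}/3\}$ where $|x|\le 2t$, we have $t\sim t+|x|$, and dividing by $t$ and taking $L^2$ gives exactly the desired bound. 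In the complementary subregion $|x|\ge 2t$, one has $|t-|x||\sim t+|x|$, so the trivial pointwise inequality $|\bar\p\p f|\le|\p^2 f|$ alone gives $\w{t+|x|}|\bar\p\p f|\lesssim\w{t-|x|}|\p^2 f|$.

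For the first inequality, I would split the spatial domain into the interior $|x|\le(1+t)/2$ and the exterior $|x|\ge(1+t)/2$. In the interior, $\w{t-|x|}\sim\w{t+|x|}\sim\w{t}$, so it suffices to bound $t|\p^2 f|$. Starting from $t\p_j f = L_j f - x_j\p_t f$, differentiation and commutation give
$$t|\p_j\p_\beta f|\lesssim|\p\Gamma f|+|\p f|+|x||\p^2 f|,$$
and since $|x|\le t/2$ here, the last term is absorbable. Coupling this with the wave equation $\p_t^2 f = \Box f + \sum_j\p_j^2 f$ (which expresses the only missing second derivative $\p_t^2 f$) closes a linear system for the components of $\p^2 f$ and yields the pointwise bound $t|\p^2 f|\lesssim|\p\Gamma f|+|\p f|+t|\Box f|$. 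In the exterior, $r:=|x|>0$ with $r\gtrsim 1+t$, so I would start from $(t-r)\p_t f = t\bar\p_r f - (\hat\omega\cdot L)f$ with $\hat\omega=x/r$ and $\bar\p_r=\hat\omega\cdot\bar\p$ (a rearrangement of the identity above since $\hat\omega\cdot L = r\p_t + t\p_r$), differentiate in $\p_\alpha$, and control the error terms using $|\p_\alpha\hat\omega|\lesssim r^{-1}$ and $r\gtrsim 1+t$. The emerging term $t\,\bar\p\p f$ is then recycled through the identity used for the second inequality, producing a coefficient $|t-r|$ times $|\p^2 f|$ bounded by $\w{t-|x|}|\p^2 f|$. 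The remaining second-order spatial derivatives are recovered from the polar decomposition $\Delta f = \p_r^2 f + \tfrac{d-1}{r}\p_r f + r^{-2}\Omega^2 f$ together with the wave equation, and an absorption argument closes the estimate.

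The main obstacle is precisely the exterior half of the first inequality: both quantities $\w{t-|x|}\p^2 f$ and $\w{t+|x|}\bar\p\p f$ appear on both sides of the chain of identities, so a naive bookkeeping is circular. Closing it requires careful tracking of the numerical constants and exploitation of the bound $|t-|x||/|x|\lesssim 1$ in the exterior region, ensuring that the $\p^2 f$ contribution picked up from the recycling step can be absorbed into the left-hand side without losses.
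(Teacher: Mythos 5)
Your treatment of the second inequality is essentially the same as the paper's, just phrased with a different multiplier. The paper uses $|x|\bar\p_i\p_{t,x}f=L_i\p_{t,x}f+(|x|-t)\p_i\p_{t,x}f$ and notes that $|x|\gtrsim\w{t+|x|}$ throughout the region $|x|\ge\w{t}/3$, so no case split is needed; you use the $t$-weighted version $t\bar\p_i=L_i+\frac{x_i(t-|x|)}{|x|}\p_t$ and must subdivide into $|x|\le 2t$ and $|x|\ge 2t$. Both close, but the paper's version is cleaner.

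Your route to the first inequality, however, has a genuine gap that you yourself flag but do not resolve. In the exterior you start from $(t-r)\p_t f=t\bar\p_r f-\hat\omega\cdot Lf$, differentiate, and then ``recycle'' $t\bar\p\p f$ via the identity underlying the second inequality. But that recycling reproduces a $|t-r|\,|\p^2 f|$ term with an uncontrolled multiplicative constant: the two identities are literally rearrangements of one another, so substituting one into the other gives
$\w{t-|x|}\p^2 f\lesssim \text{data}+\w{t+|x|}\Box f+C\,\w{t-|x|}\p^2 f$
with no reason that $C<1$. The polar-Laplacian step you propose ($\Delta f=\p_r^2 f+\tfrac{d-1}{r}\p_r f+r^{-2}\Omega^2 f$ combined with $\p_t^2 f=\Box_{t,x}f+\Delta_x f$) has the same defect: $\p_r^2 f$ is recovered from $\p_t^2 f$, which is precisely what you are trying to bound. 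So the argument is circular in the exterior, and ``careful tracking of numerical constants'' is not a proof.

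The paper avoids circularity entirely via exact algebraic identities. Direct computation gives
\begin{equation*}
(t^2-|x|^2)\p_t^2 f = t^2\Box_{t,x}f + t\,L_i\p_i f - x_i L_i\p_t f,\qquad
(t^2-|x|^2)\p_i\p_t f = tL_i\p_t f - t x_i\Box_{t,x}f - x_jL_j\p_i f + t\Omega_{ji}\p_j f,
\end{equation*}
(summation over repeated indices), together with $(t^2-|x|^2)\Delta_x f = -(t^2-|x|^2)\Box_{t,x}f + (t^2-|x|^2)\p_t^2 f$, followed by the elliptic estimate $\|\w{t-|x|}\nabla_x^2 f\|_{L^2}\lesssim\|\w{t-|x|}\Delta_x f\|_{L^2}+\|\nabla_x f\|_{L^2}$. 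Since $(t^2-|x|^2)=(t-|x|)(t+|x|)$ and all coefficients on the right are $\lesssim t+|x|$ (with the $\Box$ term picking up at most $\w{t+|x|}$), one divides by $t+|x|$ and the first inequality falls out directly, with no far/near split and no absorption. This is the key idea your proposal is missing: expressing the singular weight $t^2-|x|^2$ as a single exact algebraic combination of the good vector fields $L,\Omega$ and the wave operator, rather than as a fixed point of a contraction that has not been shown to contract.
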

\begin{proof}
We only give the proof of \eqref{wL2:ineq} with $d=3$ since one can analogously check
that \eqref{wL2:ineq} holds for $d=2$.
Direct computation derives that
\begin{equation}\label{wL2:ineq1}
\begin{split}
(t^2-|x|^2)\p_t^2f&=t^2(\p_t^2-\Delta_x)f+t(t\p_i+x_i\p_t)\p_if-tx_i\p_t\p_if\\
&\quad-x_i(x_i\p_t+t\p_i)\p_tf+tx_i\p_i\p_tf\\
&=t^2\Box_{t,x}f+tL_i\p_if-x_iL_i\p_tf.
\end{split}
\end{equation}
Similarly, we can achieve that for $i=1,2,3$
\begin{equation}\label{wL2:ineq2}
\begin{split}
(t^2-|x|^2)\p_i\p_tf&=t(t\p_i+x_i\p_t)\p_tf-tx_i\p_t^2f-x_j(x_j\p_t+t\p_j)\p_if\\
&\quad+t(x_j\p_i-x_i\p_j)\p_jf+tx_i\Delta_xf\\
&=tL_i\p_tf-tx_i\Box_{t,x}f-x_jL_j\p_if+t\Omega_{ji}\p_jf,\\
(t^2-|x|^2)\Delta_xf&=-(t^2-|x|^2)\Box_{t,x}f+(t^2-|x|^2)\p_t^2f,
\end{split}
\end{equation}
where  the summations from $j=1$ to $3$ in \eqref{wL2:ineq2} are omitted.
On the other hand, it follows from the integration by parts that
\begin{equation}\label{wL2:ineq3}
\|\w{t-|x|}\nabla_x^2f\|_{L_x^2}\ls\|\w{t-|x|}\Delta_xf\|_{L_x^2}+\|\p_xf\|_{L_x^2}.
\end{equation}
Collecting \eqref{wL2:ineq1}-\eqref{wL2:ineq3} ensures the first inequality in \eqref{wL2:ineq}.
In addition, one has
\begin{equation*}
\begin{split}
|x|\bar\p_i\p_{t,x}f&=|x|(\frac{x_i}{|x|}\p_t+\p_i)\p_{t,x}f=x_i\p_t\p_{t,x}f+|x|\p_i\p_{t,x}f\\
&=(x_i\p_t+t\p_i)\p_{t,x}f+(|x|-t)\p_i\p_{t,x}f=L_i\p_{t,x}f+(|x|-t)\p_i\p_{t,x}f.
\end{split}
\end{equation*}
This leads to the second inequality in \eqref{wL2:ineq}.
\end{proof}
\begin{remark}
The first weighted inequality in \eqref{wL2:ineq} contains the hyperbolic rotations $L_i$ instead of the scaling operator $t\p_t+r\p_r$
used in \cite{KS96}.  This is a one of the interesting observations in our paper.
\end{remark}

\subsection{Energy estimates on $\R^3\times\T$}

The following lemma illustrates that the auxiliary energy $\cX_k(t)$ can be controlled by the standard energy $E_k(t)$
with a small correction.

\begin{lemma}\label{lem:aux:energy}
Under the bootstrap assumptions \eqref{bootstrap}, for any integer $1\le k\le N$, it holds that
\begin{equation}\label{aux:energy}
\cX_k(t)\ls E_k(t)+\ve_1^2.
\end{equation}
\end{lemma}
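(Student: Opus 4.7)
The plan is to apply Lemma 5.1 (the weighted $L^2$ inequality) to the zero--mode function $f(t,x) := P_{=0} Z^a u(t,x)$, which is a pure function of $(t,x)$ since $P_{=0}$ averages out $y$. The two pieces of $\cX_k(t)$ will be bounded by the two inequalities in \eqref{wL2:ineq}, and I must only show that the RHS of each is controlled by $E_k(t) + \ve_1^2$.

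First I would handle the ``linear'' term $\sum_{|b|\le 1}\|\p_{t,x}\Gamma^b P_{=0} Z^a u\|_{L^2(\R^3)}$. Since the vector fields $\Gamma = \{\p_{t,x},L,\Omega\}$ commute with $P_{=0}$ (Lemma 2.1) and the Parseval orthogonality gives $\|P_{=0} g\|_{L^2_x}\ls \|g\|_{L^2_{x,y}}$, this is bounded by $\sum_{|b|\le 1}\|\p Z^{a+b} u\|_{L^2_{x,y}} \ls E_k(t)$ because $|a|+|b|\le k$. For the forcing-type term $\|\w{t+|x|}\Box_{t,x} P_{=0} Z^a u\|_{L^2(\R^3)}$, the key observation is that $\Box_{t,x} = \Box + \p_y^2$ and $P_{=0}\p_y \equiv 0$, so $\Box_{t,x} P_{=0} Z^a u = P_{=0} \Box Z^a u = P_{=0} Q^a$. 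Hence the real work lies in estimating $\|\w{t+|x|} P_{=0} Q^a\|_{L^2(\R^3)}$.

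For this nonlinear bound, I would use the projection identity $P_{=0}(fg) = P_{=0}f\cdot P_{=0}g + P_{=0}(P_{\neq0}f\cdot P_{\neq0}g)$ on every bilinear term in the expansion of $Q^a$ given by \eqref{eqn:high}. For the zero--zero interaction with $|b|+|c|\le|a|\le k-1\le N-1$, I assume WLOG $|b|\le(N-1)/2\le N-13$ (using $N\ge 25$), then use the low-order pointwise estimate \eqref{pw:zero'} to bound $|P_{=0}\p^{\le 2}Z^b u|\ls \ve_1\w{t+|x|}^{-1}$, pair this against the $L^2_x$ norm of the other factor (controlled by $E_k(t)$ via Parseval), and obtain a contribution $\ls \ve_1 E_k(t)$. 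For the nonzero--nonzero interaction I use $\|P_{=0}(fg)\|_{L^2_x}\ls \|fg\|_{L^2_{x,y}}$ followed by $L^\infty_{x,y}$--$L^2_{x,y}$ splitting, invoking the pointwise non-zero mode bound \eqref{pw:nonzero'} which yields $\w{t+|x|}^{-0.4}\ve_1$ after multiplying by the weight $\w{t+|x|}$, and pairing against $E_k(t)$. Combining both cases and using $\ve_1\le 1$ gives $\|\w{t+|x|} P_{=0} Q^a\|_{L^2_x}\ls \ve_1 E_k(t) \ls E_k(t)+\ve_1^2$.

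The second part of $\cX_k$ is then handled by applying the second inequality of Lemma 5.1 to the same $f=P_{=0}Z^a u$: its RHS involves $\sum_{|b|\le 1}\|\p_{t,x}\Gamma^b P_{=0}Z^a u\|_{L^2}$ (already absorbed into $E_k(t)$) plus $\|\w{t-|x|}\p_{t,x}^2 P_{=0}Z^a u\|_{L^2}$, which was just estimated in the previous step. Summing over $|a|\le k-1$ closes the claim. The main obstacle will be the bookkeeping in the nonlinearity argument: I must allocate enough derivatives to the factor taken in $L^\infty$ so that \eqref{pw:zero'} and \eqref{pw:nonzero'} apply (requiring $|b|\le N-13$), while leaving enough derivatives on the $L^2$ factor to remain inside $E_k(t)$; the constraint $N\ge 25$ is exactly what guarantees both ranges are simultaneously reachable when $|a|\le N-1$.
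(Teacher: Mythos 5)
Your proposal is correct and follows essentially the same route as the paper: apply the two weighted $L^2$ inequalities of Lemma 5.1 to $f=P_{=0}Z^au$, observe that $\Box_{t,x}P_{=0}Z^au=P_{=0}Q^a$ (since $P_{=0}\p_y\equiv 0$), and split $P_{=0}Q^a$ into zero--zero and nonzero--nonzero interactions, placing the lower-order factor (with $|b|\le N-13$, available because $N\ge 25$) in $L^\infty$ via \eqref{pw:zero'} or the nonzero pointwise bound and the other factor in $L^2$. The only minor deviation is the nonzero--nonzero term, where you pair the aggregated $L^\infty_{x,y}$ estimate \eqref{pw:nonzero'} against $\|P_{\neq0}\p\p^{\le 1}Z^cu\|_{L^2_{x,y}}\ls E_k(t)$, whereas the paper pairs the mode-by-mode $L^\infty_x$ bound from the third line of \eqref{bootstrap} against the mode-by-mode $L^2_x$ bound \eqref{bootstrap'} and sums over $n$ to obtain $\ls\ve_1^2$ directly; both variants give a contribution $\ls E_k(t)+\ve_1^2$ and close the lemma.
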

\begin{proof}
It can be concluded from \eqref{aux:energy:def}, \eqref{eqn:high} and \eqref{wL2:ineq} with $d=3$ that
\begin{equation}\label{aux:energy1}
\cX_k(t)\ls E_k(t)+\sum_{|a|\le k-1}\|\w{t+|x|}P_{=0}Q^a\|_{L^2_x}
\ls E_k(t)+\sum_{|b|+|c|\le k-1}\Big(\cJ^{bc}+\sum_{n\in\Z_*}\cJ_n^{bc}\Big),
\end{equation}
where
\begin{equation*}
\begin{split}
\cJ^{bc}&:=\|\w{t+|x|}(P_{=0}\p\p^{\le1}Z^bu)(P_{=0}\p\p^{\le1}Z^cu)\|_{L^2_x},\\
\cJ_n^{bc}&:=\|\w{t+|x|}(\p\p^{\le1}Z^bu)_n(\p\p^{\le1}Z^cu)_{-n}\|_{L_x^2}.
\end{split}
\end{equation*}
For any $|b|+|c|\le k-1\le N-1$, we can deduce $|b|\le N-13$ or $|c|\le N-13$.
Thus \eqref{pw:zero'} leads to
\begin{equation}\label{aux:energy2}
\begin{split}
\sum_{\substack{|b|+|c|\le k-1,\\|b|\le N-13}}\cJ^{bc}
&\ls\sum_{\substack{|b|+|c|\le k-1,\\|b|\le N-13}}
\|\w{t+|x|}P_{=0}\p\p^{\le1}Z^bu\|_{L^\infty}\|P_{=0}\p\p^{\le1}Z^cu\|_{L^2_x}\\
&\ls\ve_1 E_k(t),\\
\sum_{\substack{|b|+|c|\le k-1,\\|c|\le N-13}}\cJ^{bc}
&\ls\sum_{\substack{|b|+|c|\le k-1,\\|c|\le N-13}}
\|P_{=0}\p\p^{\le1}Z^bu\|_{L^2_x}\|\w{t+|x|}P_{=0}\p\p^{\le1}Z^cu\|_{L^\infty}\\
&\ls\ve_1 E_k(t).
\end{split}
\end{equation}
On the other hand, \eqref{bootstrap}, \eqref{bootstrap'} and $N\ge25$ with $|b|\le N-10$ or $|c|\le N-10$ imply
\begin{equation}\label{aux:energy3}
\sum_{n\in\Z_*}\cJ_n^{bc}\ls\ve_1^2\sum_{n\in\Z_*}|n|^{|b|+|c|+10-2N}
\ls\ve_1^2\sum_{n\in\Z_*}|n|^{9-N}\ls\ve_1^2.
\end{equation}
Substituting \eqref{aux:energy2} and \eqref{aux:energy3} into \eqref{aux:energy1} yields \eqref{aux:energy}.
\end{proof}

We next prove the highest order energy estimate for the solution $u$ of \eqref{QWE}.

\begin{lemma}\label{lem:energy:high}
Under the bootstrap assumptions \eqref{bootstrap}, it holds that
\begin{equation}\label{energy:high}
E_N^2(t)\ls\ve^2+(\ve+\ve_1)\int_0^t\w{s}^{-1}E_N^2(s)ds.
\end{equation}
\end{lemma}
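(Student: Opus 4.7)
The plan is to apply the vector-field energy method to the commuted equation $\Box Z^au=Q^a$ from \eqref{eqn:high}, after reorganising the top-order terms of $Q^a$ into a perturbed wave operator. For each $|a|\le N$, the pieces of $Q^a$ containing a factor $\partial^2 Z^au$ are precisely $F^{\alpha\beta\gamma\delta}_{aa0}\partial^2_{\alpha\beta}Z^au\,\partial^2_{\gamma\delta}u$, $F^{\alpha\beta\gamma\delta}_{a0a}\partial^2_{\alpha\beta}u\,\partial^2_{\gamma\delta}Z^au$ and $Q^{\alpha\beta\gamma}_{aa0}\partial^2_{\alpha\beta}Z^au\,\partial_\gamma u$; using $F^{\alpha\beta\gamma\delta}_{aa0}=F^{\alpha\beta\gamma\delta}_{a0a}=F^{\alpha\beta\gamma\delta}$, $Q^{\alpha\beta\gamma}_{aa0}=Q^{\alpha\beta\gamma}$ and the symmetries of $F,Q$, I move them to the left side to obtain
\[
g^{\alpha\beta}(\partial u,\partial^2 u)\,\partial^2_{\alpha\beta}Z^au=\tilde Q^a,\qquad g^{\alpha\beta}:=\eta^{\alpha\beta}-2F^{\alpha\beta\gamma\delta}\partial^2_{\gamma\delta}u-Q^{\alpha\beta\gamma}\partial_\gamma u,
\]
where $\tilde Q^a$ retains only products $\partial^2 Z^bu\cdot\partial^2 Z^cu$ with $|b|,|c|<|a|$ and all the genuinely lower-order terms (including the full $S^{\alpha\beta}\partial_\alpha Z^bu\,\partial_\beta Z^cu$ contributions).

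I would then multiply by $\partial_t Z^au$ and integrate over $\R^3\times\T$. The standard quasilinear energy identity yields
\[
\frac{d}{dt}\mathcal{E}^a(t)\ls \|\partial g\|_{L^\infty_{x,y}}\mathcal{E}^a(t)+\|\tilde Q^a\|_{L^2_{x,y}}\sqrt{\mathcal{E}^a(t)},
\]
where $\mathcal{E}^a(t)\sim\|\partial Z^au(t)\|_{L^2_{x,y}}^2$ because $\|g-\eta\|_{L^\infty}\ls\varepsilon+\varepsilon_1$ is small under the bootstrap. Combining \eqref{pw:zero'} (legitimate for derivatives of order $\le 3$ since $3\le N-12$) with \eqref{pw:nonzero'} gives $\|\partial^k u\|_{L^\infty_{x,y}}\ls(\varepsilon+\varepsilon_1)\w{t}^{-1}$ for $k=1,2,3$, hence $\|\partial g\|_{L^\infty_{x,y}}\ls(\varepsilon+\varepsilon_1)\w{t}^{-1}$, which supplies exactly the $\w{s}^{-1}$ coefficient appearing in the lemma statement.

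The remaining task is to establish $\sum_{|a|\le N}\|\tilde Q^a\|_{L^2_{x,y}}\ls(\varepsilon+\varepsilon_1)\w{t}^{-1}(E_N(t)+\cX_N(t))$. I would decompose each bilinear term via $u=P_{=0}u+P_{\neq0}u$. For any product containing at least one $P_{\neq0}$ factor I place that factor in $L^\infty_{x,y}$ through \eqref{pw:nonzero'}, whose $\w{t+|x|}^{-1.4}$ decay amply exceeds the required $\w{t}^{-1}$. For the zero--zero pieces, the partial null condition \eqref{null:high} reduces to the classical $4$D null condition on $(\xi_0,\xi_1,\xi_2,\xi_3)$ since the zero modes are $y$-independent; Lemma \ref{lem:null} then replaces one ordinary $\partial$ by the good derivative $\bar\partial$. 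On $\{|x|\le\w{t}/3\}$ the weight $\w{t-|x|}\gt\w{t}$ in the first piece of $\cX_N$ extracts the desired $\w{t}^{-1}$, while on $\{|x|\ge\w{t}/3\}$ the weight $\w{t+|x|}\gt\w{t}$ on $\bar\partial\partial$ in the second piece of $\cX_N$ provides the same gain. In both regions the lower-order factor, of index $\le N/2\le N-13$ (valid since $N\ge25$), is bounded in $L^\infty_{x,y}$ by $(\varepsilon+\varepsilon_1)\w{t}^{-1}$ via \eqref{pw:zero'} and \eqref{pw:nonzero'}, while the higher-order factor is placed in $L^2$ and absorbed by $E_N$ or $\cX_N$.

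Summing over $|a|\le N$, applying Lemma \ref{lem:aux:energy} to substitute $\cX_N\ls E_N+\varepsilon_1^2$, then integrating in time from $0$ to $t$ with $E_N(0)\ls\varepsilon$ from \eqref{initial:data}, and absorbing the residual $\varepsilon_1^2$-factors into $\varepsilon^2$ by Young's inequality (since $\varepsilon_1$ will ultimately be of order $\varepsilon$), yields the stated inequality. The principal obstacle is the careful exploitation of the null structure for the middle-order zero--zero products, particularly coordinating the high-low/low-high split with the $N-13$ threshold required for the pointwise bounds and matching the $\bar\partial$-gain against the two regional pieces of $\cX_N$ simultaneously.
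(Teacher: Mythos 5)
Your reorganization of the top-order terms of $Q^a$ into a symmetric perturbed wave operator $g^{\alpha\beta}\p^2_{\alpha\beta}Z^au=\tilde Q^a$ and the resulting quasilinear energy identity is essentially the same device the paper uses (via the integration by parts in the display preceding the estimate of $\cQ^{\alpha\beta}$), and the coefficient bound $\|\p g\|_{L^\infty_{x,y}}\ls(\ve+\ve_1)\w{t}^{-1}$ obtained from \eqref{pw:zero'} and \eqref{pw:nonzero'} is correct and matches the $\w{s}^{-1}$ weight in the final inequality. The treatment of terms with at least one $P_{\neq0}$ factor is also in line with the paper.

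However, there is a genuine gap in your handling of the zero--zero products. You invoke the partial null condition \eqref{null:high} together with Lemma \ref{lem:null} to gain a good derivative $\bar\partial$ and then feed this into the second piece of $\cX_N$. But Lemma \ref{lem:energy:high} is stated only under the bootstrap assumptions \eqref{bootstrap}: it carries \emph{no} null-condition hypothesis, and indeed it is one of the lemmas cited in the proof of the almost-global statement $(A_2)$, where \eqref{null:def} is explicitly allowed to fail. Your argument therefore proves a strictly weaker statement and cannot serve as a proof of the lemma as written.

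Fortunately, the null condition is not actually needed here, and this is precisely the point the paper exploits. In every bilinear term of $\tilde Q^a$ the index constraint $|b|+|c|\le N$ with $N\ge25$ forces the lower-order index $\min(|b|,|c|)\le\lfloor N/2\rfloor=N-13$; for such a low-order factor (including the $P_{=0}$--$P_{=0}$ case), \eqref{pw:zero'} and \eqref{pw:nonzero'} already give the pointwise bound $\|\p\p^{\le1}Z^{b}u\|_{L^\infty_{x,y}}\ls(\ve+\ve_1)\w{t}^{-1}$, while the high-order factor is placed in $L^2_{x,y}$ and controlled by $E_N$. This yields $\|\tilde Q^a\|_{L^2_{x,y}}\ls(\ve+\ve_1)\w{s}^{-1}E_N(s)$ directly, without any null structure and without the auxiliary energy $\cX_N$. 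The slow growth $(1+t)^{C(\ve+\ve_1)}$ that Gronwall then produces is exactly what the bootstrap $E_N(t)\le\ve_1(1+t)^{\ve_2}$ is designed to absorb. The partial null condition and the $\bar\partial$ gain encoded in $\cX$ are needed later, in Lemma \ref{lem:energy}, to close the \emph{uniform} (non-growing) bound on $E_{N-10}$ --- not here. You should drop the null-condition argument for this lemma and replace it with the elementary $L^\infty$--$L^2$ split described above.
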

\begin{proof}
For any multi-index $a$, multiplying \eqref{eqn:high} by $\p_tZ^au$ yields that
\begin{equation}\label{energy:high1}
\begin{split}
&\quad\frac12\p_t|\p Z^au|^2-\sum_{i=1}^4\p_i(\p_tZ^au\p_iZ^au)=\p_tZ^auQ^a\\
&=\p_tZ^au\p^2_{\alpha\beta}Z^au\cQ^{\alpha\beta}
+\p_tZ^au\sum_{\substack{b+c\le a,\\|b|,|c|<|a|}}
F_{abc}^{\alpha\beta\gamma\delta}\p^2_{\alpha\beta}Z^bu\p^2_{\gamma\delta}Z^cu\\
&\quad+\p_tZ^au\sum_{\substack{b+c\le a,\\|b|<|a|}}
Q_{abc}^{\alpha\beta\gamma}\p^2_{\alpha\beta}Z^bu\p_\gamma Z^cu
+\p_tZ^au\sum_{b+c\le a}S_{abc}^{\alpha\beta}\p_\alpha Z^bu\p_\beta Z^cu,\\
&\cQ^{\alpha\beta}:=2F^{\alpha\beta\gamma\delta}\p^2_{\gamma\delta}u
+Q^{\alpha\beta\gamma}\p_\gamma u,
\end{split}
\end{equation}
where the summations over $\alpha,\beta,\gamma,\delta=0,\cdots,4$ in \eqref{energy:high1} are omitted.
By the symmetric conditions $F^{\alpha\beta\gamma\delta}=F^{\beta\alpha\gamma\delta}$ and $Q^{\alpha\beta\gamma}=Q^{\beta\alpha\gamma}$,
one has $\cQ^{\alpha\beta}=\cQ^{\beta\alpha}$.
For the first term on the second line of \eqref{energy:high1}, we have
\begin{equation}\label{energy:high2}
\begin{split}
&\quad\p_tZ^au\p^2_{\alpha\beta}Z^au\cQ^{\alpha\beta}\\
&=\p_\alpha(\p_tZ^au\p_\beta Z^au\cQ^{\alpha\beta})
-\p_tZ^au\p_\beta Z^au\p_\alpha\cQ^{\alpha\beta}
-\p_t\p_\alpha Z^au\p_\beta Z^au\cQ^{\alpha\beta}\\
&=\p_\alpha(\p_tZ^au\p_\beta Z^au\cQ^{\alpha\beta})
-\p_tZ^au\p_\beta Z^au\p_\alpha\cQ^{\alpha\beta}
-\frac12\p_t(\p_\alpha Z^au\p_\beta Z^au\cQ^{\alpha\beta})\\
&\quad+\frac12\p_\alpha Z^au\p_\beta Z^au\p_t\cQ^{\alpha\beta}.
\end{split}
\end{equation}
On the other hand, it can be deduced from \eqref{pw:zero'} and \eqref{pw:nonzero'} that
\begin{equation}\label{energy:high3}
|\p^{\le1}\cQ^{\alpha\beta}(t,x,y)|+|\p Z^{a'}u(t,x,y)|\ls\w{t}^{-1}(\ve+\ve_1),\quad|a'|\le N-12.
\end{equation}
Integrating \eqref{energy:high1} and \eqref{energy:high2} for all $|a|\le N$ over $[0,t]\times\R^3\times\T$ yields
\begin{equation}\label{energy:high4}
\begin{split}
E_N^2(t)&\ls E_N^2(0)+(\ve+\ve_1)(E_N^2(0)+E_N^2(t))+(\ve+\ve_1)\int_0^t\w{s}^{-1}E_N^2(s)ds\\
&\quad+\int_0^tE_N(s)\Big\{\sum_{\substack{|b|+|c|\le N,\\|b|,|c|<N}}\|\p^2Z^bu\p^2Z^cu\|_{L^2_{x,y}}
+\sum_{\substack{|b|+|c|\le N,\\|b|<N}}\|\p^2Z^bu\p Z^cu\|_{L^2_{x,y}}\Big\}ds\\
&\quad+\sum_{|b|+|c|\le N}\int_0^tE_N(s)\|\p Z^bu\p Z^cu\|_{L^2_{x,y}}ds,
\end{split}
\end{equation}
where we have used \eqref{energy:high3}.
Due to $N\ge25$, then $|b|\le N-13$ or $|c|\le N-13$ holds.
Applying \eqref{energy:high3} again derives
\begin{equation}\label{energy:high5}
\begin{split}
&\sum_{\substack{|b|+|c|\le N,\\|b|,|c|<N}}\|\p^2Z^bu\p^2Z^cu\|_{L^2_{x,y}}
+\sum_{\substack{|b|+|c|\le N,\\|b|<N}}\|\p^2Z^bu\p Z^cu\|_{L^2_{x,y}}
+\sum_{|b|+|c|\le N}\|\p Z^bu\p Z^cu\|_{L^2_{x,y}}\\
&\ls\w{s}^{-1}(\ve+\ve_1)E_N(s).
\end{split}
\end{equation}
Inserting \eqref{initial:data} and \eqref{energy:high5} into \eqref{energy:high4} with the smallness of $\ve_0$ and $\ve_1$ implies \eqref{energy:high}.
\end{proof}

With the help of the partial null condition, the uniform lower order energy estimate can be established as follows.
\begin{lemma}\label{lem:energy}
Suppose that \eqref{null:def} holds.
Under the bootstrap assumptions \eqref{bootstrap}, it holds that
\begin{equation}\label{energy}
E_{N-10}(t)\ls\ve+\ve_1^2.
\end{equation}
\end{lemma}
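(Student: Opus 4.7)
The plan is to run the same multiplier argument as in Lemma \ref{lem:energy:high} but at order $k = N-10$, using the partial null condition \eqref{null:def} to achieve a time-integrable right-hand side. As in \eqref{energy:high4}, multiplying $\Box Z^au=Q^a$ by $\p_tZ^au$ and integrating by parts controls the quasilinear contributions via \eqref{energy:high3}, so up to an absorbable factor $(\ve+\ve_1)E_{N-10}^2(t)$ the task reduces to estimating
\begin{equation*}
\sum_{|a|\le N-10}\int_0^t E_{N-10}(s)\,\|Q^a(s,\cdot)\|_{L^2_{x,y}}\,ds.
\end{equation*}
The target is to prove $\|Q^a\|_{L^2_{x,y}}\ls \w{s}^{-1-\eta}(\ve+\ve_1)(E_{N-10}(s)+\cX_{N-10}(s))$ for some $\eta>0$.

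I would decompose every factor in $Q^a$ via $\mathrm{Id}=P_{=0}+P_{\neq 0}$ and analyse the four resulting bilinear patterns. For the pure zero-mode pattern, the coefficients of $P_{=0}Q^a$ still satisfy \eqref{null:high} because \eqref{null:def} places no restriction on the $y$-index $\xi_4$; Lemma \ref{lem:null} then extracts a good derivative $\bar\p$ from one factor. Placing that factor in $L^2_x$ and using \eqref{pw:zero'} on the other (valid under $N\ge 25$ since the low-order index can be chosen at most $(N-10)/2\le N-13$), the weight $\w{t+|x|}$ on $\{|x|\ge\w{s}/3\}$ and the weight $\w{t-|x|}\sim\w{s}$ on $\{|x|\le\w{s}/3\}$ built into $\cX_{N-10}$ combine with the $\w{s}^{-1}\ve_1$ from \eqref{pw:zero'} to give $\w{s}^{-1-\eta}\ve_1(E_{N-10}+\cX_{N-10})$.

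For the mixed and pure non-zero patterns the null structure is unavailable, but pigeonhole forces one factor to carry vector-field index at most $(N-10)/2\le N-8$, so \eqref{pw:nonzero'} supplies the Klein--Gordon decay $\w{s+|x|}^{-1.4}(\ve+\ve_1^2)$ on that factor in $L^\infty_{x,y}$. Plancherel in $y$ turns the sum over $n\in\Z_*$ into an absolutely convergent series thanks to the gain $|n|^{|a|+13/2-N}$ from \eqref{pw:nonzero}; pairing with the remaining high-order factor in $E_{N-10}$ delivers $\w{s}^{-1.4}(\ve+\ve_1^2)E_{N-10}$, which is more than needed. Combining all four patterns and using Lemma \ref{lem:aux:energy} to substitute $\cX_{N-10}\ls E_{N-10}+\ve_1^2$, the energy identity becomes
\begin{equation*}
E_{N-10}^2(t)\ls \ve^2+\ve_1^4+(\ve+\ve_1)\int_0^t\w{s}^{-1-\eta}E_{N-10}^2(s)\,ds,
\end{equation*}
and Gr\"onwall on the integrable kernel gives $E_{N-10}(t)\ls\ve+\ve_1^2$ for $\ve,\ve_1$ small.

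The main obstacle is the zero-mode self-interaction. Without \eqref{null:def} one could only extract a $\w{s}^{-1}$ decay and Gr\"onwall would then generate a $(1+t)^{C(\ve+\ve_1)}$ factor incompatible with the uniform bound \eqref{energy}. The decisive observation is that the partial null condition leaves $\xi_4$ unconstrained, which is precisely what allows Lemma \ref{lem:null} to be applied to the zero-mode projection and trade one ordinary derivative for the good derivative $\bar\p$, thereby unlocking the auxiliary energy $\cX_{N-10}$ introduced in \eqref{aux:energy:def}.
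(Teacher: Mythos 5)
Your plan is essentially the paper's own argument: split each factor in $Q^a$ via $\mathrm{Id}=P_{=0}+P_{\neq0}$, use the Klein--Gordon decay \eqref{pw:nonzero'} for any pattern containing a non-zero mode, and for the pure zero-mode pattern exploit the fact that the indices in \eqref{null:def} only run over $0,\dots,3$ so that Lemma \ref{lem:null} applies to $P_{=0}Q^a$ and produces a good derivative $\bar\p$, which is then paired with the weights in $\cX$. You also correctly identify why a uniform bound is possible at order $N-10$: the partial null condition leaves the $y$-direction unconstrained, and $P_{=0}\p_y=0$ removes the remaining indices.

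Two technical slips in the way you phrase the closing step. First, because each factor in $Q^a$ carries up to one $\p^{\le 1}$ on top of $Z^b$ with $|b|\le N-10$, the natural bound is in terms of $E_{N-9}(s)$ and $\cX_{N-9}(s)$, not $E_{N-10}(s)$ and $\cX_{N-10}(s)$; those one-order-higher quantities are then controlled by the bootstrap $E_N(s)+\cX_N(s)\le \ve_1\w{s}^{\ve_2}$, which is why the paper's estimates of $Q_i^{bc}$ end up as the constant $\w{s}^{-1.3}\ve_1(\ve+\ve_1)$ rather than something proportional to $E_{N-10}(s)$. Second, as a consequence, the energy inequality one obtains is linear in $E_{N-10}(s)$,
\[
E_{N-10}^2(t)\ls\ve^2+\ve_1(\ve+\ve_1)\int_0^t\w{s}^{-1.3}E_{N-10}(s)\,ds,
\]
not quadratic; the correct way to close is to take the supremum over $s\le t$, integrate the kernel, and solve the resulting quadratic inequality in $\sup_s E_{N-10}(s)$, rather than a quadratic Gr\"onwall estimate on $E_{N-10}^2$. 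This is exactly the paper's argument; your version would need the bound $\|Q^a\|\ls\w{s}^{-1-\eta}(\ve+\ve_1)E_{N-10}(s)$, which you cannot obtain because the terms in $Q^a$ genuinely need one more derivative. The fix is immediate, so the plan is sound, but as written the final Gr\"onwall step would not literally go through.
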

\begin{proof}
Integrating \eqref{energy:high1} for all $|a|\le N-10$ over $[0,t]\times\R^3\times\T$ implies that
\begin{equation}\label{energy1}
E_{N-10}^2(t)\ls E_{N-10}^2(0)+\sum_{|a|\le N-10}\int_0^tE_{N-10}(s)\|Q^a\|_{L^2_{x,y}}ds.
\end{equation}
By virtue of the facts that ${\rm Id}=P_{=0}+P_{\neq0}$ and $P_{=0}\p_y=0$, it follows from \eqref{eqn:high} that
\begin{equation}\label{energy2}
\begin{split}
Q^a&=\sum_{b+c\le a}(Q^{bc}_1+Q^{bc}_2+Q^{bc}_3),\\
Q^{bc}_1&:=\sum_{\alpha,\beta,\gamma,\delta=0}^4F_{abc}^{\alpha\beta\gamma\delta}
(P_{\neq0}\p^2_{\alpha\beta}Z^bu)\p^2_{\gamma\delta}Z^cu
+\sum_{\alpha,\beta,\gamma=0}^4Q_{abc}^{\alpha\beta\gamma}
(P_{\neq0}\p^2_{\alpha\beta}Z^bu)\p_\gamma Z^cu\\
&\quad+\sum_{\alpha,\beta=0}^4S_{abc}^{\alpha\beta}(P_{\neq0}\p_\alpha Z^bu)\p_\beta Z^cu,\\
Q^{bc}_2&:=\sum_{\alpha,\beta,\gamma,\delta=0}^4F_{abc}^{\alpha\beta\gamma\delta}
(P_{=0}\p^2_{\alpha\beta}Z^bu)(P_{\neq0}\p^2_{\gamma\delta}Z^cu)
+\sum_{\alpha,\beta,\gamma=0}^4Q_{abc}^{\alpha\beta\gamma}
(P_{=0}\p^2_{\alpha\beta}Z^bu)(P_{\neq0}\p_\gamma Z^cu)\\
&\quad+\sum_{\alpha,\beta=0}^4S_{abc}^{\alpha\beta}(P_{=0}\p_\alpha Z^bu)(P_{\neq0}\p_\beta Z^cu),\\
Q^{bc}_3&:=\sum_{\alpha,\beta,\gamma,\delta=0}^3F_{abc}^{\alpha\beta\gamma\delta}
(P_{=0}\p^2_{\alpha\beta}Z^bu)(P_{=0}\p^2_{\gamma\delta}Z^cu)
+\sum_{\alpha,\beta,\gamma=0}^3Q_{abc}^{\alpha\beta\gamma}
(P_{=0}\p^2_{\alpha\beta}Z^bu)(P_{=0}\p_\gamma Z^cu)\\
&\quad+\sum_{\alpha,\beta=0}^3S_{abc}^{\alpha\beta}(P_{=0}\p_\alpha Z^bu)(P_{=0}\p_\beta Z^cu).
\end{split}
\end{equation}
By $|b|+|c|\le|a|\le N-10$, \eqref{pw:nonzero'} leads to
\begin{equation}\label{energy3}
\begin{split}
\|Q^{bc}_1\|_{L^2_{x,y}}&\ls\|P_{\neq0}\p\p^{\le1}Z^bu\|_{L^\infty}
\|\p\p^{\le1}Z^cu\|_{L^2_{x,y}}\\
&\ls\w{s}^{-1.4}(\ve+\ve_1)E_{N-9}(s)\ls\w{s}^{-1.3}\ve_1(\ve+\ve_1),\\
\|Q^{bc}_2\|_{L^2_{x,y}}&\ls\|P_{=0}\p\p^{\le1}Z^bu\|_{L^2_{x,y}}
\|P_{\neq0}\p\p^{\le1}Z^cu\|_{L^\infty}\\
&\ls\w{s}^{-1.4}(\ve+\ve_1)E_{N-9}(s)\ls\w{s}^{-1.3}\ve_1(\ve+\ve_1).
\end{split}
\end{equation}
In the region $|x|\le\w{s}/2$, it can be concluded from \eqref{pw:zero} and Hardy inequality on $\R^3$ that
\begin{equation}\label{energy4}
\begin{split}
\|Q^{bc}_3\|_{L^2_{x,y}(|x|\le\frac{\w{s}}{2})}&\ls\w{s}^{-1}(\|\w{s-|x|}P_{=0}\p^2Z^bu\|_{L^2_{x,y}}
+\|\w{x}^{-1}\w{s-|x|}P_{=0}\p Z^bu\|_{L^2_{x,y}})\\
&\qquad\times\|\w{x}P_{=0}\p\p^{\le1}Z^cu\|_{L^\infty(|x|\le\frac{\w{s}}{2})}\\
&\ls\w{s}^{-1}(\|\w{s-|x|}P_{=0}\p^2Z^bu\|_{L^2_{x,y}}+\|P_{=0}\p Z^bu\|_{L^2_{x,y}}))\|\w{x}P_{=0}\p\p^{\le1}Z^cu\|_{L^\infty(|x|\le\frac{\w{s}}{2})}\\
&\ls\w{s}^{-1.4}\ve_1(E_{N-10}(s)+\cX_{N-9}(s))\ls\w{s}^{-1.3}\ve_1^2.
\end{split}
\end{equation}
In the region $|x|\ge\w{s}/2$, applying \eqref{null:structure} to $Q^{bc}_3$ with \eqref{null:high} shows
\begin{equation*}\label{energy5}
\begin{split}
|Q^{bc}_3|&\ls|P_{=0}\bar\p\p^{\le1}Z^bu||P_{=0}\p\p^{\le1}Z^cu|
+|P_{=0}\p\p^{\le1}Z^bu||P_{=0}\bar\p\p^{\le1}Z^cu|.
\end{split}
\end{equation*}
This, together with \eqref{pw:good}, yields
\begin{equation}\label{energy5}
\begin{split}
\|Q^{bc}_3\|_{L^2_{x,y}(|x|\ge\frac{\w{s}}{2})}
&\ls\|P_{=0}\bar\p\p^{\le1}Z^bu\|_{L^\infty(|x|\ge\frac{\w{s}}{2})}
\|P_{=0}\p\p^{\le1}Z^cu\|_{L^2_{x,y}}\\
&\quad+\|P_{=0}\p\p^{\le1}Z^bu\|_{L^2_{x,y}}
\|P_{=0}\bar\p\p^{\le1}Z^cu\|_{L^\infty(|x|\ge\frac{\w{s}}{2})}\\
&\ls\w{s}^{-1.4}\ve_1E_{N-9}(s)\ls\w{s}^{-1.3}\ve_1^2.
\end{split}
\end{equation}
By collecting \eqref{energy1}--\eqref{energy5}, for any $0\leq t_1\le t$, we arrive at
\begin{equation*}
\begin{split}
E_{N-10}^2(t_1)&\ls E_{N-10}^2(0)+\ve_1(\ve+\ve_1)\int_0^{t_1} E_{N-10}(s)\w{s}^{-1.3}ds\\
&\ls E_{N-10}^2(0)+\ve_1(\ve+\ve_1)\sup_{s\in[0,t]}E_{N-10}(s).
\end{split}
\end{equation*}
Hence,
\begin{equation*}
\Big(\sup_{s\in[0,t]}E_{N-10}(s)\Big)^2\ls E_{N-10}^2(0)+\ve_1(\ve+\ve_1)\sup_{s\in[0,t]}E_{N-10}(s).
\end{equation*}
This, together with \eqref{initial:data} ensures that
\begin{equation*}
E_{N-10}(t)\ls\sup_{s\in[0,t]}E_{N-10}(s)\ls E_{N-10}(0)+\ve_1(\ve+\ve_1)\ls\ve+\ve_1^2,
\end{equation*}
which implies \eqref{energy}.
\end{proof}

\subsection{Energy estimates on $\R^2\times\T$}

\begin{lemma}\label{lem:aux:energy-cubic}
Under the bootstrap assumptions \eqref{bootstrap-cubic}, for any integer $1\le k\le N$, it holds that
\begin{equation}\label{aux:energy-cubic}
\cX_k(t)\ls E_k(t)+\ve_1^3.
\end{equation}
\end{lemma}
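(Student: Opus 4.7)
The proof will closely parallel that of Lemma \ref{lem:aux:energy}, adapted from the quadratic to the cubic setting on $\R^2\times\T$. First I would start from the definition of $\cX_k(t)$, apply the identity \eqref{eqn:high-cubic} with $\Box Z^au=\mathcal{C}^a$, and use the weighted $L^2$ inequality \eqref{wL2:ineq} with $d=2$ to obtain
\begin{equation*}
\cX_k(t)\ls E_k(t)+\sum_{|a|\le k-1}\|\w{t+|x|}P_{=0}\mathcal{C}^a\|_{L^2_x},
\end{equation*}
reducing the problem to bounding the weighted $L^2$-norm of the zero-mode of the cubic nonlinearity.

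Next I would decompose $\mathcal{C}^a$ into trilinear terms of the schematic form $(\p\p^{\le1}Z^bu)(\p\p^{\le1}Z^cu)(\p\p^{\le1}Z^du)$ with $|b|+|c|+|d|\le k-1\le N-1$. Writing each factor via $\mathrm{Id}=P_{=0}+P_{\neq0}$ and using $P_{=0}(fgh)=\sum_{n_1+n_2+n_3=0}f_{n_1}g_{n_2}h_{n_3}$, the possible contributions split into: (a) the all-zero-mode product $(P_{=0}f)(P_{=0}g)(P_{=0}h)$; (b) one zero mode and two non-zero modes whose Fourier indices are opposite, $(P_{=0}f)(g)_n(h)_{-n}$ with $n\in\Z_*$ (three such symmetric subcases); and (c) a triple sum $(f)_{n_1}(g)_{n_2}(h)_{n_3}$ with $n_1,n_2,n_3\in\Z_*$ and $n_1+n_2+n_3=0$. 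Since $|b|+|c|+|d|\le N-1$ and $N\ge25$, in each term at most one of the three multi-indices can exceed $N-11$, so the pointwise estimate \eqref{pw:zero'-cubic} applies to at least two of the three factors.

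For case (a) I would place the two lower-order factors in $L^\infty$ via \eqref{pw:zero'-cubic} (each contributing a gain $\w{t+|x|}^{-1/2}$) and the remaining factor in $L^2$ through $E_k(t)$; the two weights combine to $\w{t+|x|}^{-1}$, which exactly absorbs the prefactor $\w{t+|x|}$ and yields a bound of order $\ve_1^2 E_k(t)\ls E_k(t)$ (using smallness of $\ve_1$). For case (b) I would put one zero-mode factor in $L^\infty$ by \eqref{pw:zero'-cubic}, one non-zero mode factor in $L^\infty$ by \eqref{pw:nonzero'-cubic}, and the third factor in $L^2$ by \eqref{bootstrap'}; the resulting series in $n$ is summable thanks to the factor $|n|^{|\cdot|+5-N}|n|^{|\cdot|-N}$ from the bootstrap, giving a bound of order $\ve_1^3$. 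Case (c) is handled similarly: using $|n_3|\le|n_1|+|n_2|$ and splitting into regions according to which of $|n_1|,|n_2|,|n_3|$ is largest, I place the top-frequency factor in $L^2$ (paying one power $|n|^{|\cdot|-N}$) and the other two in $L^\infty$ (gaining $|n|^{|\cdot|+5-N}$ each); with $N\ge25$, the residual weights $|n_j|^{-2}$ make the triple sum absolutely convergent and again produce an $\ve_1^3$ bound. Summing all cases and substituting back yields \eqref{aux:energy-cubic}.

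The main obstacle will be the careful bookkeeping in case (c), where one must simultaneously control the spatial weight $\w{t+|x|}$, distribute the derivatives/vector fields between the $L^\infty$ and $L^2$ factors in a way that respects the index threshold $|b|,|c|,|d|\le N-11$ for the pointwise bounds, and verify the triple-Fourier summation; however, all the required ingredients (the pointwise decays \eqref{pw:zero'-cubic}, \eqref{pw:nonzero'-cubic}, the $L^2$ frequency bound \eqref{bootstrap'}, and the summability margin from $N\ge25$) are already in place and parallel the pattern established in the proofs of Lemma \ref{lem:pw:nonzero-cubic} and Lemma \ref{lem:aux:energy}, so the argument should close cleanly.
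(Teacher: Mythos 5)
Your proposal follows essentially the same route as the paper's proof: the same starting inequality via \eqref{wL2:ineq} and \eqref{eqn:high-cubic}, the same three-way decomposition of $P_{=0}\mathcal{C}^a$ into all-zero-mode, one-zero-two-nonzero, and three-nonzero pieces, the same "at most one of $|b|,|c|,|d|$ exceeds $N-12$" observation, and the same $L^\infty$-$L^2$ placement with summability in the periodic frequency. One small bookkeeping oversight: in your case (b) you only describe the subcase where the zero-mode factor carries a low index and hence can be put in $L^\infty$; when the zero-mode factor is the one with the high index it must instead go in $L^2$ via $E_k(t)$, with both nonzero-mode factors in $L^\infty$ via the bootstrap pointwise bound, which yields an additional $\ve_1^2 E_k(t)$ contribution (the paper's subcase ii-B) rather than $\ve_1^3$ — this is harmless for the final estimate but should be included for completeness.
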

\begin{proof}
It can be concluded from the definition of $\cX_k(t)$, \eqref{eqn:high-cubic} and \eqref{wL2:ineq} with $d=2$ that
\begin{equation}\label{aux:energy1-cubic}
\begin{split}
\cX_k(t)\ls& E_k(t)+\sum_{|a|\le k-1}\|\w{t+|x|}P_{=0}\mathcal{C}^a\|_{L^2_x}\\
\ls& E_k(t)+\sum_{|b|+|c|+|d|\le k-1}\Big(\cJ^{bcd}+\sum_{n\in\Z_*}\cJ_n^{bcd}+\sum_{n,l\in\Z_*,n+l\neq0}\cJ_{nl}^{bcd}\Big),
\end{split}
\end{equation}
where
\begin{equation}\label{aux:energy1-cubic-1}
\begin{split}
\cJ^{bcd}&:=\|\w{t+|x|}(P_{=0}\p\p^{\le1}Z^bu)(P_{=0}\p\p^{\le1}Z^cu)(P_{=0}\p\p^{\le1}Z^du)\|_{L^2_x},\\
\cJ_n^{bcd}&:=\|\w{t+|x|}(P_{=0}\p\p^{\le1}Z^bu)(\p\p^{\le1}Z^cu)_n(\p\p^{\le1}Z^du)_{-n}\|_{L_x^2},\\
\cJ_{nl}^{bcd}&:=\|\w{t+|x|}(\p\p^{\le1}Z^bu)_n(\p\p^{\le1}Z^cu)_{l}(\p\p^{\le1}Z^du)_{-(n+l)}\|_{L_x^2}.
\end{split}
\end{equation}

Next we estimate the second term on the right-hand side of \eqref{aux:energy1-cubic}.

Because of $|b|+|c|+|d|\le k-1\leq N-1$, we see that at least two of the three multiple indexes $b, c$ and $d$
in the summation of \eqref{aux:energy1-cubic} meet the fact that the sum of each index is less than or equal to $N-12$
(otherwise, $|b|+|c|+|d|\geq 2(N-11)\geq N+N-22\geq N+3 $, which contradicts with $|b|+|c|+|d|\leq N-1$).
In view of the above fact, at least one of the following three cases always holds:
\begin{equation}\label{aux:energy1-cubic-2}
\begin{split}
A_1)~|b|\leq N-12, \quad |c|\leq N-12,\\
A_2)~|b|\leq N-12, \quad |d|\leq N-12,\\
A_3)~|c|\leq N-12, \quad |d|\leq N-12.
\end{split}
\end{equation}

\emph{i) Estimate of~$\sum_{|b|+|c|+|d|\le k-1}\cJ^{bcd}$.}

\vskip 0.1 true cm

By symmetry, we only need to consider one of the three cases in \eqref{aux:energy1-cubic-2}. Without loss of generality, we assume
that $|b|\leq N-12, |c|\leq N-12$. It follows from \eqref{pw:zero'-cubic} and the definition of $E_k(t)$ that
\begin{equation}\label{aux:energy1-cubic-3}
\begin{split}
&\sum_{\substack{|b|+|c|+|d|\le k-1,\\|b|,|c|\le N-12}}\cJ^{bcd}\\
\ls& \sum_{\substack{|b|+|c|+|d|\le k-1,\\|b|,|c|\le N-12}}
\|\w{t+|x|}P_{=0}\p\p^{\le1}Z^bu\cdot P_{=0}\p\p^{\le1}Z^cu\|_{L^\infty}\|P_{=0}\p\p^{\le1}Z^du\|_{L^2_x}\\
\ls& \ve_1^2 E_k(t).
\end{split}
\end{equation}

\emph{ii) Estimate of~$\sum_{n\in\Z_*}\sum_{|b|+|c|+|d|\le k-1}\cJ_n^{bcd}$.}

\vskip 0.1 true cm

By symmetry of the indexes $c,d$ in $\cJ_n^{bcd}$, for case \emph{$A_1$)} and case \emph{$A_2$)} in \eqref{aux:energy1-cubic-2},
one only requires to estimate one of them. In this case, it suffices to divide the estimates of $\sum_{n\in\Z_*}\sum_{|b|+|c|+|d|\le k-1}\cJ_n^{bcd}$ into the following two cases.

\vskip 0.1 true cm

\emph{ii-A) $|b|\leq N-12, |c|\leq N-12$.}

\vskip 0.1 true cm

By \eqref{bootstrap-cubic}, \eqref{bootstrap'} and \eqref{pw:zero'-cubic}, we obtain
\begin{equation}\label{aux:energy1-cubic-4}
\begin{split}
&\sum_{n\in\Z_*}\sum_{\substack{|b|+|c|+|d|\le k-1,\\|b|,|c|\le N-12}}\cJ_{n}^{bcd}\\
\ls& \sum_{n\in\Z_*}\sum_{\substack{|b|+|c|+|d|\le k-1,\\|b|,|c|\le N-12}}
\|\w{t+|x|}P_{=0}\p\p^{\le1}Z^bu(\p\p^{\le1}Z^cu)_n\|_{L^\infty}\|(\p\p^{\le1}Z^du)_{-n}\|_{L^2_x}\\
\ls& \sum_{n\in\Z_*}\sum_{|c|+|d|\le k-1}\ve_1^2 \w{t}^{-0.4}|n|^{|c|+2+5-N}\cdot \ve_1 \w{t}^{\varepsilon_2}|n|^{|d|+1-N}\\
\ls&\ve_1^3 \w{t}^{-0.4+\varepsilon_2}\sum_{n\in\Z_*}\sum_{|c|+|d|\le N-1}|n|^{|c|+|d|+8-2N}\\
\ls& \ve_1^3 \w{t}^{-0.4+\varepsilon_2}\sum_{n\in\Z_*}|n|^{7-N}\ls \ve_1^3 \w{t}^{-0.4+\ve_2}.
\end{split}
\end{equation}

\vskip 0.1 true cm

\emph{ii-B) $|c|\leq N-12, |d|\leq N-12$.}

\vskip 0.1 true cm

It follows from \eqref{bootstrap-cubic} and the definition of $E_k(t)$ that
\begin{equation}\label{aux:energy1-cubic-5}
\begin{split}
&\sum_{n\in\Z_*}\sum_{\substack{|b|+|c|+|d|\le k-1,\\|c|,|d|\le N-12}}\cJ_{n}^{bcd}\\
\ls& \sum_{n\in\Z_*}\sum_{\substack{|b|+|c|+|d|\le k-1,\\|c|,|d|\le N-12}}
\|\w{t+|x|}(\p\p^{\le1}Z^cu)_n(\p\p^{\le1}Z^du)_{-n}\|_{L^\infty}\|P_{=0}\p\p^{\le1}Z^bu\|_{L^2_x}\\
\ls& \sum_{n\in\Z_*}\sum_{|c|+|d|\le k-1}\ve_1^2 \w{t}^{-0.8}|n|^{|c|+2+5-N}|n|^{|d|+2+5-N}E_k(t)\\
\ls&\ve_1^2 \w{t}^{-0.8}\sum_{n\in\Z_*}\sum_{|c|+|d|\le N-1}|n|^{|c|+|d|+14-2N} E_k(t)\\
\ls& \ve_1^2 \w{t}^{-0.8}E_k(t)\sum_{n\in\Z_*}|n|^{13-N}\ls \ve_1^2 \w{t}^{-0.8}E_k(t).
\end{split}
\end{equation}

In view of \eqref{aux:energy1-cubic-4}-\eqref{aux:energy1-cubic-5}, we have
\begin{equation}\label{aux:energy1-cubic-6}
\sum_{n\in\Z_*}\sum_{|b|+|c|+|d|\le k-1}\cJ_{n}^{bcd}\ls \ve_1^2 E_k(t)+\ve_1^3.
\end{equation}

\emph{iii) Estimate of~$\sum_{n,l\in\Z_*,n+l\neq0}\sum_{|b|+|c|+|d|\le k-1}\cJ_{nl}^{bcd}$.}

\vskip 0.1 true cm

By symmetry of the indexes $b, c$ in $\cJ_{nl}^{bcd}$, for case \emph{$A_2)$} and case \emph{$A_3$)} in \eqref{aux:energy1-cubic-2},
we only need to estimate one of them. Then the estimates of $\sum_{n\in\Z_*}\sum_{|b|+|c|+|d|\le k-1}\cJ_n^{bcd}$
can be divided into the following two cases.

\vskip 0.1 true cm

\emph{iii-A) $|b|\leq N-12, |c|\leq N-12$.}

\vskip 0.1 true cm

By \eqref{bootstrap-cubic} and \eqref{bootstrap'}, we obtain
\begin{equation}\label{aux:energy1-cubic-7}
\begin{split}
&\sum_{n,l\in\Z_*,n+l\neq0}\sum_{\substack{|b|+|c|+|d|\le k-1,\\|b|,|c|\le N-12}}\cJ_{nl}^{bcd}\\
\ls& \sum_{n,l\in\Z_*,n+l\neq0}\sum_{\substack{|b|+|c|+|d|\le k-1,\\|b|,|c|\le N-12}}
\|\w{t+|x|}(\p\p^{\le1}Z^bu)_n(\p\p^{\le1}Z^cu)_l\|_{L^\infty}\|(\p\p^{\le1}Z^du)_{-(n+l)}\|_{L^2_x}\\
\ls& \sum_{n,l\in\Z_*,n+l\neq0}\sum_{\substack{|b|+|c|+|d|\le k-1,\\|b|,|c|\le N-12}}\ve_1^2 \w{t}^{-0.8}|n|^{|b|+2+5-N}
|l|^{|c|+2+5-N}\cdot \ve_1 \w{t}^{\varepsilon_2}|n+l|^{|d|+1-N}\\
\ls& \ve_1^3 \w{t}^{-0.8+\varepsilon_2}\sum_{n,l\in\Z_*,n+l\neq0}|n|^{-2}|l|^{-2}
\ls \ve_1^3 \w{t}^{-0.8+\varepsilon_2}.
\end{split}
\end{equation}

\emph{iii-B) $|c|\leq N-12, |d|\leq N-12$.}

\vskip 0.1 true cm

Utilizing \eqref{bootstrap-cubic} and \eqref{bootstrap'} again to obtain
\begin{equation}\label{aux:energy1-cubic-8}
\begin{split}
&\sum_{n,l\in\Z_*,n+l\neq0}\sum_{\substack{|b|+|c|+|d|\le k-1,\\|c|,|d|\le N-12}}\cJ_{nl}^{bcd}\\
\ls& \sum_{n,l\in\Z_*,n+l\neq0}\sum_{\substack{|b|+|c|+|d|\le k-1,\\|c|,|d|\le N-12}}
\|\w{t+|x|}(\p\p^{\le1}Z^cu)_l(\p\p^{\le1}Z^du)_{-(n+l)}\|_{L^\infty}\|(\p\p^{\le1}Z^bu)_n\|_{L^2_x}\\
\ls& \sum_{n,l\in\Z_*,n+l\neq0}\sum_{\substack{|b|+|c|+|d|\le k-1,\\|c|,|d|\le N-12}}\ve_1^2 \w{t}^{-0.8}|l|^{|c|+2+5-N}
|n+l|^{|d|+2+5-N}\cdot \ve_1 \w{t}^{\varepsilon_2}|n|^{|b|+1-N}\\
\ls& \ve_1^3 \w{t}^{-0.8+\varepsilon_2}\sum_{n,l\in\Z_*,n+l\neq0}|l|^{-2}|n+l|^{-2}
\ls \ve_1^3 \w{t}^{-0.8+\varepsilon_2}.
\end{split}
\end{equation}

By \eqref{aux:energy1-cubic-7}-\eqref{aux:energy1-cubic-8}, one has
\begin{equation}\label{aux:energy1-cubic-9}
\sum_{n,l\in\Z_*,n+l\neq0}\sum_{|b|+|c|+|d|\le k-1}\cJ_{nl}^{bcd}\ls \ve_1^3.
\end{equation}

Substituting \eqref{aux:energy1-cubic-3}, \eqref{aux:energy1-cubic-6} and \eqref{aux:energy1-cubic-9}
into \eqref{aux:energy1-cubic} yields \eqref{aux:energy-cubic}.
\end{proof}

\begin{lemma}\label{lem:energy:high-cubic}
Under the bootstrap assumptions \eqref{bootstrap-cubic}, it holds that
\begin{equation}\label{energy:high-cubic}
E_N^2(t)\ls\ve^2+(\ve+\ve_1)^2E_N^2(t)+(\ve+\ve_1)^2\int_0^t\w{s}^{-1}E_N^2(s)ds.
\end{equation}
\end{lemma}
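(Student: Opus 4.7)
The proof will follow the template of Lemma \ref{lem:energy:high} for the quadratic case, but with extra care for the cubic structure. First I would multiply $\Box Z^au=\mathcal{C}^a$ by $\p_tZ^au$, sum over $|a|\le N$, and integrate over $[0,t]\times\R^2\times\T$. The left-hand side yields $E_N^2(t)-E_N^2(0)$, while the right-hand side collects space-time integrals of $\p_tZ^au\cdot\mathcal{C}^a$. Using $E_N(0)\ls\ve$ from \eqref{initial:data} takes care of the $\ve^2$ term.

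In $\mathcal{C}^a$, as classified by \eqref{eqn:high-cubic}, I would isolate the \emph{principal} terms in which the multi-index $a$ falls entirely on the factor of highest derivative order. Grouping these by symmetry, they can be written as $\cQ^{\alpha\beta}\p^2_{\alpha\beta}Z^au$, where
\[
\cQ^{\alpha\beta}:=3F^{\alpha\beta\gamma\delta\mu\nu}\p^2_{\gamma\delta}u\p^2_{\mu\nu}u+2G^{\alpha\beta\gamma\delta\mu}\p^2_{\gamma\delta}u\p_\mu u+H^{\alpha\beta\gamma\delta}\p_\gamma u\p_\delta u,
\]
and $\cQ^{\alpha\beta}=\cQ^{\beta\alpha}$ by the symmetry conditions on $F,G,H$. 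The integration-by-parts identity
\[
\p_tZ^au\cdot\cQ^{\alpha\beta}\p^2_{\alpha\beta}Z^au=\p_\alpha\bigl(\p_tZ^au\p_\beta Z^au\cQ^{\alpha\beta}\bigr)-\tfrac12\p_t\bigl(\p_\alpha Z^au\p_\beta Z^au\cQ^{\alpha\beta}\bigr)+\tfrac12\p_\alpha Z^au\p_\beta Z^au\p_t\cQ^{\alpha\beta}-\p_tZ^au\p_\beta Z^au\p_\alpha\cQ^{\alpha\beta}
\]
transforms the top-order piece into a boundary term at time $t$ plus lower-order interior terms. The boundary term is controlled by $\|\cQ(t,\cdot)\|_{L^\infty_{x,y}}E_N^2(t)$; combining the pointwise bounds \eqref{pw:zero'-cubic} and \eqref{pw:nonzero'-cubic} gives $|\p^{\le2}u|\ls(\ve+\ve_1)\w{t+|x|}^{-1/2}$, so $\|\cQ\|_{L^\infty}\ls(\ve+\ve_1)^2$. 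This is precisely the source of the $(\ve+\ve_1)^2E_N^2(t)$ term in \eqref{energy:high-cubic}.

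For the remaining contributions, every product in $\mathcal{C}^a$ (including the $\p\cQ^{\alpha\beta}$ remainders from the identity above) has at most one factor of order $>N-11$, since $N\ge25$. I would therefore put the high-order factor in $L^2$ (bounded by $E_N(s)$) and the other two factors in $L^\infty$, using again \eqref{pw:zero'-cubic} and \eqref{pw:nonzero'-cubic} to bound each by $(\ve+\ve_1)\w{s+|x|}^{-1/2}$. The product of the two low-order factors thus yields the weight $(\ve+\ve_1)^2\w{s}^{-1}$, and the Cauchy-Schwarz inequality in $s$ together with $E_N(s)\cdot E_N(s)=E_N^2(s)$ produces the integral term $(\ve+\ve_1)^2\int_0^t\w{s}^{-1}E_N^2(s)\,ds$. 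Assembling the three contributions yields \eqref{energy:high-cubic}.

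The main obstacle is twofold. First, unlike the quadratic 3D case where each low-order factor already decays like $\w{t}^{-1}$, in 2D each factor only decays as $\w{t}^{-1/2}$; it is essential that two low-order factors must appear together in every cubic term to generate the integrable weight $\w{s}^{-1}$. Second, the quasilinear principal part cannot be estimated term-by-term in $L^2$ (as it would cost a factor of $E_N$ twice); the symmetry-based integration by parts above is required to convert it to a boundary term absorbed by the $(\ve+\ve_1)^2E_N^2(t)$ factor on the right of \eqref{energy:high-cubic}, which is then moved back to the left once $\ve,\ve_1$ are chosen small.
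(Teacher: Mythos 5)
Your proposal follows essentially the same route as the paper's proof of Lemma~\ref{lem:energy:high-cubic}: multiply \eqref{eqn:high-cubic} by $\p_tZ^au$, isolate the quasilinear top-order term $\cR^{\alpha\beta}\p^2_{\alpha\beta}Z^au$ (the paper's $\cR^{\alpha\beta}$ is your $\cQ^{\alpha\beta}$), apply the symmetry-based integration-by-parts identity to turn it into a boundary term absorbed by $(\ve+\ve_1)^2E_N^2(t)$ plus interior remainders, and use the combinatorial fact that in every cubic product at most one multi-index can exceed $N-12$ so that two factors are always pointwise controlled by \eqref{pw:zero'-cubic}, \eqref{pw:nonzero'-cubic} with decay $\w{s}^{-1/2}$ each. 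The only cosmetic remark is that no Cauchy--Schwarz in $s$ is needed — once the two low-order factors are placed in $L^\infty$ you directly have $(\ve+\ve_1)^2\w{s}^{-1}E_N(s)$ multiplying the $E_N(s)$ from $\p_t Z^a u$, giving the integral term.
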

\begin{proof}
For any multi-index $a$, multiplying \eqref{eqn:high-cubic} by $\p_tZ^au$ yields that
\begin{equation}\label{energy:high1-cubic}
\begin{split}
&\quad\frac12\p_t|\p Z^au|^2-\sum_{i=1}^3\p_i(\p_tZ^au\p_iZ^au)=\p_tZ^au \mathcal{C}^a\\
&=\p_tZ^au\p^2_{\alpha\beta}Z^au\cR^{\alpha\beta}
+\p_tZ^au\sum_{\substack{b+c+d\le a,\\|b|,|c|,|d|<|a|}}
F_{abcd}^{\alpha\beta\gamma\delta\mu\nu}\p^2_{\alpha\beta}Z^bu\p^2_{\gamma\delta}Z^cu\p^2_{\mu\nu}Z^du\\
&\quad+\p_tZ^au\sum_{\substack{b+c+d\le a,\\|b|,|c|<|a|}}
G_{abcd}^{\alpha\beta\gamma\delta\mu}\p^2_{\alpha\beta}Z^bu\p_{\gamma\delta} Z^cu\p_{\mu} Z^du\\
&\quad+\p_tZ^au\sum_{\substack{b+c+d\le a,\\|b|<|a|}}
H_{abcd}^{\alpha\beta\gamma\delta}\p^2_{\alpha\beta}Z^bu\p_{\gamma} Z^cu\p_{\delta} Z^du\\
&\quad+\p_tZ^au\sum_{b+c+d\le a}S_{abcd}^{\alpha\beta\gamma}\p_\alpha Z^bu\p_\beta Z^cuu\p_\gamma Z^du,\\
&\cR^{\alpha\beta}:=3F^{\alpha\beta\gamma\delta\mu\nu}\p^2_{\gamma\delta}u\p^2_{\mu\nu}u
+2G^{\alpha\beta\gamma\delta\mu}\p_{\gamma\delta} u\p_\mu u+H^{\alpha\beta\gamma\delta}\p_{\gamma} u\p_\delta u,
\end{split}
\end{equation}
where the summations of $\alpha,\beta,\gamma,\delta,\mu,\nu=0,\cdots,3$ for repeated indexes in \eqref{energy:high1-cubic}
are omitted.
By the symmetric conditions $F^{\alpha\beta\gamma\delta\mu\nu}=F^{\beta\alpha\gamma\delta\mu\nu}$, $G^{\alpha\beta\gamma\delta\mu}=G^{\beta\alpha\gamma\delta\mu}$ and
$H^{\alpha\beta\gamma\delta}=H^{\beta\alpha\gamma\delta}$, we see that $\cR^{\alpha\beta}=\cR^{\beta\alpha}$.
For the first term on the second line of \eqref{energy:high1-cubic}, we have
\begin{equation}\label{energy:high2-cubic}
\begin{split}
&\quad\p_tZ^au\p^2_{\alpha\beta}Z^au\cR^{\alpha\beta}\\
&=\p_\alpha(\p_tZ^au\p_\beta Z^au\cR^{\alpha\beta})
-\p_tZ^au\p_\beta Z^au\p_\alpha\cR^{\alpha\beta}
-\p_t\p_\alpha Z^au\p_\beta Z^au\cR^{\alpha\beta}\\
&=\p_\alpha(\p_tZ^au\p_\beta Z^au\cR^{\alpha\beta})
-\p_tZ^au\p_\beta Z^au\p_\alpha\cR^{\alpha\beta}
-\frac12\p_t(\p_\alpha Z^au\p_\beta Z^au\cR^{\alpha\beta})\\
&\quad+\frac12\p_\alpha Z^au\p_\beta Z^au\p_t\cR^{\alpha\beta}.
\end{split}
\end{equation}
In addition, it can be deduced from \eqref{pw:zero'-cubic} and \eqref{pw:nonzero'-cubic} that
\begin{subequations}\label{energy:high3-cubic}\begin{align}
|\p^{\le1}\cR^{\alpha\beta}(t,x,y)|&\ls \w{t}^{-1}(\ve+\ve_1)^2,\label{energy:high3-cubic-A}\\
|\p\p^{\le1}Z^{a'}u(t,x,y)\p\p^{\le1}Z^{b'}u(t,x,y)|&\ls \w{t}^{-1}(\ve+\ve_1)^2,\nonumber\\
\mathrm{for}~|a'|,|b'|&\leq N-12.\label{energy:high3-cubic-B}
\end{align}
\end{subequations}
Integrating \eqref{energy:high1-cubic} and \eqref{energy:high2-cubic} for all $|a|\le N$ over $[0,t]\times\R^2\times\T$ leads to
\begin{equation*}\label{energy:high4-cubic}
\begin{split}
E_N^2(t)&\ls E_N^2(0)+(\ve+\ve_1)^2(E_N^2(0)+E_N^2(t))+(\ve+\ve_1)^2\int_0^t\w{s}^{-1}E_N^2(s)ds\\
&\quad+\int_0^tE_N(s)\Big\{\sum_{\substack{|b|+|c|+|d|\le N,\\|b|,|c|,|d|<N}}\|\p^2Z^bu\p^2Z^cu\p^2Z^du\|_{L^2_{x,y}}\\
\end{split}
\end{equation*}

\begin{equation}\label{energy:high4-cubic}
\begin{split}
&\quad+\sum_{\substack{|b|+|c|+|d|\le N,\\|b|,|c|<N}}\|\p^2Z^bu\p^2Z^cu\p Z^du\|_{L^2_{x,y}}\\
&\quad+\sum_{\substack{|b|+|c|+|d|\le N,\\|b|<N}}\|\p^2Z^bu\p Z^cu\p Z^du\|_{L^2_{x,y}}\\
&\quad+\sum_{|b|+|c|+|d|\le N}\|\p Z^bu\p Z^cu\p Z^du\|_{L^2_{x,y}}\Big\}ds,
\end{split}
\end{equation}
where \eqref{energy:high3-cubic-A} is used.

Due to $|b|+|c|+|d|\leq N$, we see that at least two of the three multiple indexes $b, c$ and $d$
meet the fact that the sum of each index is less than or equal to $N-12$ (otherwise, $|b|+|c|+|d|\geq 2(N-11)= N+N-22\geq N+3$,
which contradicts with $|b|+|c|+|d|\leq N$). Hence we can apply the estimate \eqref{energy:high3-cubic-B} to deduce that
\begin{equation}\label{energy:high5-cubic}
\begin{split}
&\sum_{\substack{|b|+|c|+|d|\le N,\\|b|,|c|,|d|<N}}\|\p^2Z^bu\p^2Z^cu\p^2Z^du\|_{L^2_{x,y}}
+\sum_{\substack{|b|+|c|+|d|\le N,\\|b|,|c|<N}}\|\p^2Z^bu\p^2Z^cu\p Z^du\|_{L^2_{x,y}}\\
&\quad+\sum_{\substack{|b|+|c|+|d|\le N,\\|b|<N}}\|\p^2Z^bu\p Z^cu\p Z^du\|_{L^2_{x,y}}+\sum_{|b|+|c|+|d|\le N}\|\p Z^bu\p Z^cu\p Z^du\|_{L^2_{x,y}}\\
&\ls\w{s}^{-1}(\ve+\ve_1)^2 E_N(s).
\end{split}
\end{equation}
Inserting \eqref{initial:data} and \eqref{energy:high5-cubic} into \eqref{energy:high4-cubic} derives \eqref{energy:high-cubic}.
\end{proof}

\begin{lemma}\label{lem:energy-cubic}
Suppose that \eqref{null:def-cubic} holds. Under the bootstrap assumptions \eqref{bootstrap-cubic}, it holds that
\begin{equation}\label{energy-cubic}
E_{N-9}(t)\ls \ve+\ve_1(\ve+\ve_1)^2.
\end{equation}
\end{lemma}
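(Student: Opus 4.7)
The plan is to adapt the proof of Lemma~\ref{lem:energy} to the cubic nonlinearity on $\R^2\times\T$. First, I multiply \eqref{eqn:high-cubic} by $\p_t Z^a u$ for $|a|\le N-9$ and integrate over $[0,t]\times\R^2\times\T$ to obtain the basic energy identity
$$E_{N-9}^2(t)\ls E_{N-9}^2(0) + \sum_{|a|\le N-9}\int_0^t E_{N-9}(s)\|\mathcal{C}^a\|_{L^2_{x,y}}\,ds.$$
Then I decompose $\mathcal{C}^a$ by inserting $\mathrm{Id}=P_{=0}+P_{\neq0}$ on each of the three nonlinear factors, grouping the resulting terms into Type~I (at least one factor is $P_{\neq0}$) and Type~II (all three factors are $P_{=0}$).

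For Type~I, since $|b|+|c|+|d|\le|a|\le N-9$ and $N\ge 25$, at most one of the indices $b,c,d$ can exceed $N-11$, so at least two factors fall under the pointwise bounds \eqref{pw:zero'-cubic} and \eqref{pw:nonzero'-cubic}, while the remaining (possibly highest-order) factor is controlled in $L^2_{x,y}$ by $E_{N-9}(s)$. A $P_{\neq0}$ factor decays like $\w{t+|x|}^{-0.9}$ and a $P_{=0}$ factor like $\w{t+|x|}^{-1/2}$, so the product of the two pointwise factors contributes at least $\w{s}^{-1.4}$. Consequently $\|\mathcal{C}^a_{\mathrm{I}}\|_{L^2_{x,y}}\ls \ve_1^2\w{s}^{-1.4}E_{N-9}(s)$, which is time-integrable.

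For Type~II I partition $\R^2$ into the interior $|x|\le \w{s}/2$ and the exterior $|x|\ge \w{s}/2$. In the exterior I invoke the partial null condition through Lemma~\ref{lem:null-cubic} and \eqref{null:high-cubic} to convert one ordinary derivative into the good derivative $\bar\p$; then \eqref{pw:good-cubic} yields $|P_{=0}\bar\p Z^b u|\ls \ve_1\w{t+|x|}^{-1}$ there and \eqref{pw:zero-cubic} gives $|P_{=0}\p Z^c u|\ls \ve_1\w{s}^{-1/2}$, producing a contribution of order $\ve_1^2\w{s}^{-3/2}E_{N-9}(s)$. In the interior $\w{s-|x|}\sim\w{s}$, and after extracting $\w{s}^{-1}$ from this equivalence I use a Hardy-type inequality on $\R^2$ to absorb the factor $\w{s-|x|}P_{=0}\p^2 Z^b u$ (together with the lower-order companion $\w{x}^{-1}\w{s-|x|}P_{=0}\p Z^b u$) into $\cX_{N-9}(s)+E_{N-9}(s)\ls\ve_1$, while bounding the two remaining factors pointwise by $\ve_1\w{s}^{-1/2}$ via \eqref{pw:zero'-cubic}; the resulting bound is again time-integrable.

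Collecting all contributions, I arrive at
$$E_{N-9}^2(t)\ls \ve^2 + \ve_1(\ve+\ve_1)^2\sup_{s\in[0,t]}E_{N-9}(s),$$
and a standard supremum argument, combined with \eqref{initial:data}, closes the estimate as $E_{N-9}(t)\ls \ve+\ve_1(\ve+\ve_1)^2$. The principal obstacle, genuinely sharper than in the three-dimensional quadratic setting of Lemma~\ref{lem:energy}, is that the pointwise decay of $P_{=0}\p Z^a u$ on $\R^2\times\T$ is only $\w{t+|x|}^{-1/2}$, so a naive cubic estimate of the purely zero-mode interaction produces at best $\w{s}^{-1}$ decay, which fails time-integrability. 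The null-condition gain via $\bar\p$ in the exterior and the weight $\w{t-|x|}$ encoded in $\cX_k$ (together with a Hardy-type inequality on $\R^2$) in the interior are both indispensable to buy the extra $\w{s}^{-1/2}$ needed to close the energy estimate.
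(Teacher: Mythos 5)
Your proposal follows the same outline as the paper's proof at the top level: the energy identity, the decomposition of $\mathcal{C}^a$ via $\mathrm{Id}=P_{=0}+P_{\neq0}$, the $\w{s}^{-0.9}$ treatment of terms containing a nonzero mode, and the interior/exterior split of the pure zero-mode term with the partial null condition and \eqref{pw:good-cubic} applied in the exterior $|x|\ge\w{s}/2$. The Type~I and exterior Type~II estimates are essentially the paper's. The interior Type~II step, however, has a genuine gap. You transplant the Hardy/$\cX$ argument from the $\R^3\times\T$ quadratic case (cf.\ \eqref{energy4} in the proof of Lemma~\ref{lem:energy}): take the highest-order factor in $L^2$ with the $\w{s-|x|}$ weight from $\cX$, extract $\w{s}^{-1}$, and bound the other two pointwise by $\ve_1\w{s}^{-1/2}$ via \eqref{pw:zero'-cubic}. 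This only works when the $L^2$ factor carries a second derivative $\p^2$, since $\cX_k$ controls only $\w{t-|x|}P_{=0}\p^2 Z^a u$. For the $S$-type part $S_{abcd}^{\alpha\beta\gamma}\p_\alpha Z^bu\p_\beta Z^cu\p_\gamma Z^du$ of the cubic nonlinearity, which involves exclusively first-order derivatives, no $\p^2$ is available; your bound there degenerates to $\ve_1^2\w{s}^{-1}E_{N-9}(s)$, which is not time-integrable and the estimate does not close. A secondary issue is that the companion bound $\|\w{x}^{-1}\w{s-|x|}P_{=0}\p Z^bu\|_{L^2}\ls\|\w{s-|x|}P_{=0}\p^2Z^bu\|_{L^2}+\|P_{=0}\p Z^bu\|_{L^2}$ in the $\R^3$ argument relies on the Hardy inequality $\||x|^{-1}f\|_{L^2}\ls\|\nabla_xf\|_{L^2}$, which fails on $\R^2$, so even the $F$-, $G$-, $H$-pieces would require modification in a literal transplant.

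The paper's actual interior estimate (see \eqref{energy6-cubic}) sidesteps both problems and never invokes Hardy or the $\w{s-|x|}$ weight of $\cX$ directly. It pairs the highest-order factor in $L^2$ with one $L^\infty$ factor via \eqref{energy4-cubic}, giving $\ve_1(\ve+\ve_1)\w{s}^{-1/2+\ve_2}$, and then bounds the remaining $L^\infty$ factor by splitting $|x|\le\w{s}/2$ into the shell $\w{s}/3\le|x|\le\w{s}/2$, where \eqref{pw:zero-cubic} yields $\w{s}^{-1}$ since $\w{x}\sim\w{t-|x|}\sim\w{s}$, and the inner ball $|x|\le\w{s}/3$, where \eqref{pw:awaycone-cubic} yields $\w{s}^{-1}\ln^{1/2}(e+s)$. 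This gives $\ve_1^2(\ve+\ve_1)\w{s}^{-3/2+2\ve_2}\ln^{1/2}(e+s)$ uniformly over all four types $F,G,H,S$. To make your argument correct you should replace the Hardy/$\cX$ step in the interior with this away-from-the-cone pointwise decay.
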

\begin{proof}
Integrating \eqref{energy:high1-cubic} for all $|a|\le N-9$ over $[0,t]\times\R^2\times\T$ yields
\begin{equation}\label{energy1-cubic}
E_{N-9}^2(t)\ls E_{N-9}^2(0)+\sum_{|a|\le N-9}\int_0^tE_{N-9}(s)\|\mathcal{C}^a\|_{L^2_{x,y}}ds.
\end{equation}
By virtue of the decomposition ${\rm Id}=P_{=0}+P_{\neq0}$ and $P_{=0}\p_y=0$, from \eqref{eqn:high-cubic} one can obtain
\begin{equation}\label{energy2-cubic}
\|\mathcal{C}^a\|_{L^2_{x,y}}
\ls \sum_{b+c+d\le a}\Big(\|Q^{bcd}_1\|_{L^2_{x,y}}+\|Q^{bcd}_2\|_{L^2_{x,y}}+\|Q^{bcd}_3\|_{L^2_{x,y}}+\|Q^{bcd}_4\|_{L^2_{x,y}}\Big),
\end{equation}
where
\begin{equation*}
\begin{split}
Q^{bcd}_1&:=(P_{\neq0}\p\p^{\le1}Z^bu)(P_{=0}\p\p^{\le1}Z^cu)(P_{=0}\p\p^{\le1}Z^du),\\
Q^{bcd}_2&:=(P_{\neq0}\p\p^{\le1}Z^bu)(P_{\neq0}\p\p^{\le1}Z^cu)(P_{=0}\p\p^{\le1}Z^du),\\
Q^{bcd}_3&:=(P_{\neq0}\p\p^{\le1}Z^bu)(P_{\neq0}\p\p^{\le1}Z^cu)(P_{\neq0}\p\p^{\le1}Z^du),\\
\end{split}
\end{equation*}

\begin{equation*}
\begin{split}
Q^{bcd}_4&:=\sum_{\alpha,\beta,\gamma,\delta,\mu,\nu=0}^3F_{abcd}^{\alpha\beta\gamma\delta\mu\nu}
(P_{=0}\p^2_{\alpha\beta}Z^bu)(P_{=0}\p^2_{\gamma\delta}Z^cu)(P_{=0}\p^2_{\mu\nu}Z^du)\\
&\quad+\sum_{\alpha,\beta,\gamma,\delta,\mu=0}^3G_{abcd}^{\alpha\beta\gamma\delta\mu}
(P_{=0}\p^2_{\alpha\beta}Z^bu)(P_{=0}\p_{\gamma\delta} Z^cu)(P_{=0}\p_\mu Z^du)\\
&\quad+\sum_{\alpha,\beta,\gamma,\delta=0}^3H_{abcd}^{\alpha\beta\gamma\delta}
(P_{=0}\p^2_{\alpha\beta}Z^bu)(P_{=0}\p_\gamma Z^cu)(P_{=0}\p_\delta Z^du)\\
&\quad+\sum_{\alpha,\beta=0}^3S_{abcd}^{\alpha\beta\gamma}(P_{=0}\p_\alpha Z^bu)(P_{=0}\p_\beta Z^cu)(P_{=0}\p_\gamma Z^du).
\end{split}
\end{equation*}
Due to $|b|\le N-9$, \eqref{pw:nonzero'-cubic} leads to
\begin{equation}\label{energy3-cubic}
\begin{split}
&\sum_{b+c+d\le a}(\|Q^{bcd}_1\|_{L^2_{x,y}}+\|Q^{bcd}_2\|_{L^2_{x,y}}+\|Q^{bcd}_3\|_{L^2_{x,y}})\\
&\ls \sum_{|b|\leq N-9,|c|+|d|\le |a|}\|P_{\neq0}\p\p^{\le1}Z^bu\|_{L^\infty}
(\|(P_{=0}\p\p^{\le1}Z^cu)(P_{=0}\p\p^{\le1}Z^du)\|_{L^2_{x,y}}\\
&\quad+\|(P_{\neq0}\p\p^{\le1}Z^cu)(P_{=0}\p\p^{\le1}Z^du)\|_{L^2_{x,y}}+\|(P_{\neq0}\p\p^{\le1}Z^cu)(P_{\neq0}\p\p^{\le1}Z^du)\|_{L^2_{x,y}})\\
&\ls (\ve+\ve_1)\w{s}^{-0.9}\sum_{|c|+|d|\le |a|}
(\|(P_{=0}\p\p^{\le1}Z^cu)(P_{=0}\p\p^{\le1}Z^du)\|_{L^2_{x,y}}\\
&\quad+\|(P_{\neq0}\p\p^{\le1}Z^cu)(P_{=0}\p\p^{\le1}Z^du)\|_{L^2_{x,y}}+\|(P_{\neq0}\p\p^{\le1}Z^cu)(P_{\neq0}\p\p^{\le1}Z^du)\|_{L^2_{x,y}}).
\end{split}
\end{equation}
Note that $|c|+|d|\le |a|\leq N-9$ holds. Then $|c|\leq N-12$ or $|d|\leq N-12$ (otherwise,
$|c|+|d|\geq 2(N-11)= N+N-22\geq N+3$, which contradicts with $|c|+|d|\leq N-9$). Hence we can use
\eqref{bootstrap-cubic}, Lemma \ref{lem:proj}, \eqref{pw:zero'-cubic} and \eqref{pw:nonzero'-cubic} to derive that
\begin{equation}\label{energy4-cubic}
\begin{split}
&\sum_{|c|+|d|\le |a|}
(\|(P_{=0}\p\p^{\le1}Z^cu)(P_{=0}\p\p^{\le1}Z^du)\|_{L^2_{x,y}}+\|(P_{\neq0}\p\p^{\le1}Z^cu)(P_{=0}\p\p^{\le1}Z^du)\|_{L^2_{x,y}}\\
&\quad+\|(P_{\neq0}\p\p^{\le1}Z^cu)(P_{\neq0}\p\p^{\le1}Z^du)\|_{L^2_{x,y}})\\
&\ls (\ve+\ve_1)\w{s}^{-1/2}E_{N-8}(s)\ls \ve_1(\ve+\ve_1)\w{s}^{-1/2+\ve_2}.
\end{split}
\end{equation}
Substituting \eqref{energy4-cubic} into \eqref{energy3-cubic}, one obtains
\begin{equation}\label{energy5-cubic}
\sum_{b+c+d\le a}(\|Q^{bcd}_1\|_{L^2_{x,y}}+\|Q^{bcd}_2\|_{L^2_{x,y}}+\|Q^{bcd}_3\|_{L^2_{x,y}})
\ls \ve_1(\ve+\ve_1)^2\w{s}^{-1.4+\ve_2}.
\end{equation}

Next, we deal with $\sum_{b+c+d\le a}\|Q^{bcd}_4\|_{L^2_{x,y}}$ by exploiting the partial null condition
structure of the nonlinearity in \eqref{null:def-cubic}.
In the region $|x|\le\w{s}/2$, it can be deduced from \eqref{bootstrap-cubic}, \eqref{pw:zero-cubic}, \eqref{pw:awaycone-cubic} and \eqref{energy4-cubic} that
\begin{equation*}\label{energy6-cubic}
\begin{split}
&\sum_{b+c+d\le a}\|Q^{bcd}_4\|_{L^2_{x,y}(|x|\leq \w{s}/2)}\\
&\ls \sum_{b+c+d\le a}\|P_{=0}\p\p^{\le1}Z^bu\|_{L^\infty(|x|\leq \w{s}/2)}\|(P_{=0}\p\p^{\le1}Z^cu)(P_{=0}\p\p^{\le1}Z^du)\|_{L^2_{x,y}}\\
\end{split}
\end{equation*}

\begin{equation}\label{energy6-cubic}
\begin{split}
&\ls \sum_{|b|\le N-9}(\|P_{=0}\p\p^{\le1}Z^bu\|_{L^\infty(\w{s}/3\leq|x|\leq \w{s}/2)}+\|P_{=0}\p\p^{\le1}Z^bu\|_{L^\infty(|x|\leq \w{s}/3)})\\
&\quad\quad\quad\quad\times\ve_1(\ve+\ve_1)\w{s}^{-1/2+\ve_2}\\
&\ls \sum_{|b|\le N-9}\Big(\w{s}^{-1}(E_{|b|+3}(s)+\cX_{|b|+3}(s))+\w{s}^{-1}\mathrm{ln}^{\frac{1}{2}}(e+s)(E_{|b|+3}(s)+\cX_{|b|+3}(s))\Big)\\
&\quad\quad\quad\quad\times\ve_1(\ve+\ve_1)\w{s}^{-1/2+\ve_2}\\
&\ls \ve_1^2(\ve+\ve_1)\w{s}^{-3/2+2\ve_2}\mathrm{ln}^{\frac{1}{2}}(e+s).
\end{split}
\end{equation}
In the region $|x|\ge\w{s}/2$, applying \eqref{null:structure-cubic} to $Q^{bcd}_4$ with \eqref{null:high-cubic} shows
\begin{equation*}
\sum_{b+c+d\le a}|Q^{bcd}_4|\ls \sum_{b+c+d\le a}|P_{=0}\bar\p\p^{\le1}Z^bu||P_{=0}\p\p^{\le1}Z^cu||P_{=0}\p\p^{\le1}Z^du|.
\end{equation*}
This, together with \eqref{pw:good-cubic} and \eqref{energy4-cubic}, yields
\begin{equation}\label{energy7-cubic}
\begin{split}
&\sum_{b+c+d\le a}\|Q^{bcd}_4\|_{L^2_{x,y}(|x|\ge \w{s}/2)}\\
&\ls \sum_{b+c+d\le a}\|P_{=0}\bar\p\p^{\le1}Z^bu\|_{L^\infty(|x|\ge \w{s}/2)}\|(P_{=0}\p\p^{\le1}Z^cu)(P_{=0}\p\p^{\le1}Z^du)\|_{L^2_{x,y}}\\
&\ls \sum_{|b|\le N-9}\w{s}^{-1}(E_{|b|+3}(s)+\cX_{|b|+3}(s))\cdot\ve_1(\ve+\ve_1)\w{s}^{-1/2+\ve_2}\\
&\ls \ve_1^2(\ve+\ve_1)\w{s}^{-3/2+2\ve_2}.
\end{split}
\end{equation}
By inserting \eqref{energy5-cubic}-\eqref{energy7-cubic} into \eqref{energy2-cubic} and then substituting the resulting
estimate of \eqref{energy2-cubic} into \eqref{energy1-cubic}, for any $0\leq t_1\le t$, we arrive at
\begin{equation*}
\begin{split}
E_{N-9}^2(t_1)&\ls E_{N-9}^2(0)+\ve_1(\ve+\ve_1)^2\int_0^{t_1} E_{N-9}(s)\w{s}^{-1.4+2\ve_2}\mathrm{ln}^{\frac{1}{2}}(e+s)ds\\
&\ls E_{N-9}^2(0)+\ve_1(\ve+\ve_1)^2\int_0^{t_1} E_{N-9}(s)\w{s}^{-1.1}ds\\
&\ls E_{N-9}^2(0)+\ve_1(\ve+\ve_1)^2\sup_{s\in[0,t]}E_{N-9}(s).
\end{split}
\end{equation*}
Hence,
\begin{equation*}
\Big(\sup_{s\in[0,t]}E_{N-9}(s)\Big)^2\ls E_{N-9}^2(0)+\ve_1(\ve+\ve_1)^2\sup_{s\in[0,t]}E_{N-9}(s).
\end{equation*}
This together with \eqref{initial:data} ensures that
\begin{equation*}
E_{N-9}(t)\ls\sup_{s\in[0,t]}E_{N-9}(s)\ls E_{N-9}(0)+\ve_1(\ve+\ve_1)^2\ls \ve+\ve_1(\ve+\ve_1)^2,
\end{equation*}
which implies \eqref{energy-cubic}. This completes the proof of Lemma \ref{lem:energy-cubic}.
\end{proof}

\section{Proofs of the theorems}\label{sect6}
\subsection{Proof of Theorem \ref{thm1}}
\begin{proof}[Proof of $(A_1)$ in Theorem \ref{thm1}]
From Lemmas \ref{lem:pw:nonzero}, \ref{lem:aux:energy}, \ref{lem:energy:high}, \ref{lem:energy} and Gronwall's inequality,
there are constants $C_1,C_2\ge1$ such that
\begin{equation*}
\begin{split}
E_N(t)+\cX_N(t)&\le C_1\ve(1+t)^{C_2(\ve+\ve_1)}+C_1\ve_1^2,\\
E_{N-10}(t)+\cX_{N-10}(t)&\le C_1\ve+C_1\ve_1^2,\\
\w{t+|x|}^{1.4}|(Z^au)_n(t,x)|&\le C_1|n|^{|a|+7-N}(\ve+\ve_1^2),\quad|a|\le N-8,n\in\Z_*.
\end{split}
\end{equation*}
Choosing $\ve_0=\min\{\frac{1}{16C_1^2},\frac{1}{10C_2(1+4C_1)}\}$, $\ve_1=4C_1\ve$, $\ve_2=C_2(1+4C_1)\ve\le1/10$, then for $\ve\le\ve_0$ we can obtain
\begin{equation*}
\begin{split}
E_N(t)+\cX_N(t)&\le\frac14\ve_1(1+t)^{\ve_2}+4C_1^2\ve_0\ve_1\le\frac12\ve_1(1+t)^{\ve_2},\\
E_{N-10}(t)+\cX_{N-10}(t)&\le\frac14\ve_1+\frac14\ve_1\le\frac12\ve_1,\\
\w{t+|x|}^{1.4}|(Z^au)_n(t,x)|&\le\frac12\ve_1|n|^{|a|+7-N},\quad|a|\le N-8,n\in\Z_*.
\end{split}
\end{equation*}
This, together with the local existence of classical solution to \eqref{QWE}
(see \cite{Hormander97book})
yields that \eqref{QWE} with \eqref{null:def} admits a unique global solution $u\in \bigcap\limits_{j=0}^{N+1}C^{j}([0,\infty), H^{N+1-j}(\R^3\times\T)))$.
Moreover, \eqref{dyu:pw} can be achieved by \eqref{pw:zero'}, \eqref{pw:nonzero} and \eqref{dyu:pw'}.
\end{proof}

\begin{proof}[Proof of $(A_2)$ in Theorem \ref{thm1}]
According to Lemmas \ref{lem:pw:nonzero}, \ref{lem:aux:energy}, \ref{lem:energy:high} and Gronwall's inequality,
there are constants $C_3,C_4\ge1$ such that
\begin{equation*}
\begin{split}
E_N(t)+\cX_N(t)&\le C_3\ve(1+t)^{C_4(\ve+\ve_1)}+C_3\ve_1^2,\\
\w{t+|x|}^{1.4}|(Z^au)_n(t,x)|&\le C_3|n|^{|a|+7-N}(\ve+\ve_1^2),\quad|a|\le N-8,n\in\Z_*.
\end{split}
\end{equation*}
Let $\kappa_0=\frac{1}{C_4(1+4eC_3)}$, $\ve_0=\frac{1}{16eC_3^2}$, $\ve_1=4eC_3\ve$, $\ve_2=0$.
Then for $\ve\le\ve_0$ and $t\le T_\ve=e^{\kappa_0/\ve}-1$ we can see that
\begin{equation*}
\begin{split}
E_N(t)+\cX_N(t)&\le eC_3\ve+4eC_3^2\ve_0\ve_1\le\frac12\ve_1,\\
\w{t+|x|}^{1.4}|(Z^au)_n(t,x)|&\le\frac12\ve_1|n|^{|a|+7-N},\quad|a|\le N-8,n\in\Z_*.
\end{split}
\end{equation*}
This, together with the local existence of classical solution to \eqref{QWE} ensures that \eqref{QWE} admits a unique solution
$u\in \bigcap\limits_{j=0}^{N+1}C^{j}([0,T_\ve], H^{N+1-j}(\R^3\times\T)))$. In addition, \eqref{dyu:pw} comes
from \eqref{pw:zero'}, \eqref{pw:nonzero} and \eqref{dyu:pw'}.
\end{proof}

\subsection{Proofs of Theorem \ref{thm1-cubic}}\label{sect8}
\begin{proof}[Proof of $(B_1)$ in Theorem \ref{thm1-cubic}]
From Lemmas \ref{lem:pw:nonzero-cubic}, \ref{lem:aux:energy-cubic}, \ref{lem:energy:high-cubic}, \ref{lem:energy-cubic},
there is a constant $C_5\ge1$ such that
\begin{equation}\label{proof-of-theorem-cubic}
\begin{split}
&E_N^2(t)\le C_5\ve^2+C_5(\ve+\ve_1)^2E_N^2(t)
+C_5(\ve+\ve_1)^2\int_0^t\w{s}^{-1}E_N^2(s)ds,\\
&E_{N-9}(t)+\cX_{N-9}(t)\le C_5\ve+C_5\ve_1(\ve+\ve_1)^2,\\
&\w{t+|x|}^{0.9}|(Z^au)_n(t,x)|\le C_5|n|^{|a|+5-N}(\ve+\ve_1^3),\quad|a|\le N-7, n\in\Z_*.
\end{split}
\end{equation}
As the first step, by choosing $\ve^1_0=\frac{1}{2\sqrt{2C_5}}$, when $\ve, \ve_1\leq \ve^1_0$, we see from the first inequality
in \eqref{proof-of-theorem-cubic} that
\begin{equation*}\label{proof-of-theorem-cubic-1}
E_N^2(t)\leq 2C_5\ve^2+2C_5(\ve+\ve_1)^2\int_0^t\w{s}^{-1}E_N^2(s)ds.
\end{equation*}
By Gronwall's inequality and Lemma \ref{lem:aux:energy}, there are constants $C_6, C_7\geq 1$ such that
\begin{equation}\label{proof-of-theorem-cubic-1}
E_N(t)+\cX_N(t)\le C_6\ve(1+t)^{C_7(\ve+\ve_1)^2}+C_6\ve_1^3.
\end{equation}
Choosing $\ve_0=\min\{\frac{1}{8\sqrt{C_6}(C_5+C_6)},\frac{1}{2\sqrt{C_5}(1+4(C_5+C_6))},
\frac{1}{\sqrt{10C_7}(1+4(C_5+C_6))}\}$, $\ve_1=\mathrm{min}\{\ve_0^1,4(C_5+C_6)\ve\}$, $\ve_2=C_7(1+4(C_5+C_6))^2\ve^2 \le1/10$.
Then for $\ve\le\ve_0$ we can obtain
\begin{equation*}
\begin{split}
E_N(t)+\cX_N(t)&\le\frac12\ve_1(1+t)^{\ve_2},\\
E_{N-9}(t)+\cX_{N-9}(t)&\le\frac12\ve_1,\\
\w{t+|x|}^{0.9}|(Z^au)_n(t,x)|&\le\frac12\ve_1|n|^{|a|+5-N},\quad|a|\le N-7, n\in\Z_*.
\end{split}
\end{equation*}
This, together with the local existence of classical solution to \eqref{QWE-cubic} yields that \eqref{QWE-cubic}
with \eqref{null:def-cubic} admits a unique global solution $u\in \bigcap\limits_{j=0}^{N+1}C^{j}([0,\infty), H^{N+1-j}(\R^2\times\T)))$. Moreover,
\eqref{dyu:pw-cubic} can be achieved by \eqref{pw:zero'-cubic}, \eqref{pw:nonzero-cubic} and \eqref{dyu:pw'-cubic}.
\end{proof}

\begin{proof}[Proof of $(B_2)$ in Theorem \ref{thm1-cubic}]
In view of the third inequality in \eqref{proof-of-theorem-cubic} and \eqref{proof-of-theorem-cubic-1}~(once $\ve, \ve_1\leq \ve^1_0$), there
are constants $C_8,C_9\ge1~(C_8:=C_7, C_9:=\mathrm{max}\{C_5,C_6\})$ such that
\begin{equation*}
\begin{split}
E_N(t)+\cX_N(t)&\le C_9\ve(1+t)^{C_8(\ve+\ve_1)^2}+C_9\ve_1^3,\\
\w{t+|x|}^{0.9}|(Z^au)_n(t,x)|&\le C_9|n|^{|a|+5-N}(\ve+\ve_1^3),\quad|a|\le N-7, n\in\Z_*.
\end{split}
\end{equation*}
Let $\kappa_0=\frac{1}{C_8(1+4eC_9)^2}$, $\ve_0=\frac{1}{8e^2C_9^{3/2}}$, $\ve_1=4eC_9\ve$, $\ve_2=0$.
Then for $\ve\le\ve_0$ and $t\le T_\ve=e^{\kappa_0/\ve^2}-1$, we arrive at
\begin{equation*}
\begin{split}
E_N(t)+\cX_N(t)&\le\frac12\ve_1,\\
\w{t+|x|}^{0.9}|(Z^au)_n(t,x)|&\le\frac12\ve_1|n|^{|a|+5-N},\quad|a|\le N-7, n\in\Z_*.
\end{split}
\end{equation*}
This, together with the local existence of classical solution to \eqref{QWE-cubic}, ensures that \eqref{QWE-cubic}
admits a unique solution $u\in \bigcap\limits_{j=0}^{N+1}C^{j}([0,T_\ve], H^{N+1-j}(\R^2\times\T)))$.
In addition, \eqref{pw:zero'-cubic}, \eqref{pw:nonzero-cubic} and \eqref{dyu:pw'-cubic} lead to \eqref{dyu:pw-cubic}.
\end{proof}









\begin{thebibliography}{99}


\bibitem{Alinhac01a}
S. Alinhac, {\it The null condition for quasilinear wave equations in two space dimensions I,} Invent. Math. \textbf{145} (2001), no. 3, 597--618.

\bibitem{Alinhac01b}
S. Alinhac, {\it The null condition for quasilinear wave equations in two space dimensions II,} Amer. J. Math. \textbf{123} (2001), 1071--1101.


\bibitem{Christodoulou86}
D. Christodoulou, {\it Global solutions of nonlinear hyperbolic equations for small initial data}, Comm. Pure Appl. Math. \textbf{39} (1986), no. 2, 267--282.


\bibitem{CF} R. Courant, K. O. Friedrichs, \textit{Supersonic flow and shock waves},
Interscience Publishers Inc., New York, 1948.



\bibitem{Dong-JFA}
Shijie Dong, {\it Global solution to the wave and Klein-Gordon system under null condition in dimension two,} J. Funct. Anal. \textbf{281} (2021), no. 11, Paper No. 109232, 29 pp.

\bibitem{Yue-M-6}
Senhao Duan, Yue Ma, {\it Global solutions of wave-Klein–Gordon systems in $2+1$ dimensional space-time with strong couplings in divergence form,} SIAM J. Math. Anal. \textbf{54} (2022), no. 3, 2691--2726.

\bibitem{Ett-15}
B. Ettinger, {\it Well-posedness of the equation for the three-form field in eleven-dimensional supergravity,} Trans. Amer. Math. Soc. \textbf{367} (2015),
no. 2, 887--910.

\bibitem{Georgiev-90}
V. Georgiev, {\it Global solution of the system of wave and Klein-Gordon equations,} Math. Z. \textbf{203} (1990), no. 4, 683--698.

\bibitem{Georgiev92}
V. Georgiev, {\it Decay estimates for the Klein-Gordon equation,} Comm. Partial Differential Equations \textbf{17} (1992), no. 7-8, 1111--1139.


\bibitem{Hormander97book}
L. H\"ormander, {\it Lectures on nonlinear hyperbolic differential equations.} Math\'ematiques \& Applications (Berlin) [Mathematics \& Applications], \textbf{26}. Springer-Verlag, Berlin, 1997. viii+289 pp.

\bibitem{HouYin20jde}
Fei Hou, Huicheng Yin, {\it Global small data smooth solutions of 2-D null-form wave equations with non-compactly supported initial data,} J. Differential Equations \textbf{268} (2020), no. 2, 490--512.

\bibitem{HS-21}
C. Huneau, A. Stingo, {\it Global well-posedness for a system of quasilinear wave equations on a product space,} arXiv:2110.13982 (2021).


\bibitem{IP19}
A. D. Ionescu, B. Pausader, {\it On the global regularity for a wave-Klein-Gordon coupled system,} Acta Math. Sin. (Engl. Ser.) \textbf{35} (2019), no. 6, 933--986.


\bibitem{Ifrim-Stingo}
M. Ifrim, A. Stingo, {\it Almost global well-posedness for quasilinear strongly coupled wave-Klein-Gordon
systems in two space dimensions,} Preprint, arXiv:1910.12673 (2019).


\bibitem{Kalu} T. Kaluza, {\it Zum Unit\"atsproblem der Physik}, Int. J. Mod. Phys. D \textbf{27}(14): 1870001, 2018.



\bibitem{Katayama12}
S. Katayama, {\it Global existence for coupled systems of nonlinear wave and Klein-Gordon equations in three space dimensions,}
Math. Z. \textbf{270} (2012), no. 1-2, 487--513.





\bibitem{Klainerman85}
S. Klainerman, {\it Global existence of small amplitude solutions to nonlinear Klein-Gordon equations in four space-time dimensions,}
Comm. Pure Appl. Math. \textbf{38} (1985), no. 5, 631--641.


\bibitem{Klainerman}
S. Klainerman, {\it The null condition and global existence to nonlinear wave equations,} in: Nonlinear Systems
of Partial Differential Equations in Applied Mathematics, Part \textbf{1}, Santa Fe, NM, 1984, in:
Lect. Appl. Math., vol. \textbf{23}, Amer. Math. Soc., Providence, RI, 1986, pp. 293--326.

\bibitem{K-P} S. Klainerman, G. Ponce,  \textit{ Global small amplitude solutions to nonlinear evolution equations,}
Comm. Pure Appl. Math. \textbf{36}  (1983), no. 1, 133-141.





\bibitem{KS96}
S. Klainerman, T. Sideris, {\it On almost global existence for nonrelativistic wave equations in 3D,}
Comm. Pure Appl. Math. \textbf{49} (1996), no. 3, 307--321.


\bibitem{Klein} O. Klein, {\it Quantum theory and five-dimensional theory of relativity}, (In German and English). Z.
Phys.\textbf{37} (1926), 895--906.

\bibitem{PLR-03}
P. H. Lesky, R. Racke, {\it Nonlinear wave equations in infinite waveguides,} Comm Partial Differential
Equations. \textbf{28} (2003), no. 7-8, 1265--1301.


\bibitem{LeF-Ma-16}
P. G. LeFloch, Yue Ma, {\it The global nonlinear stability of Minkowski space for self-gravitating massive fields,} Comm. Math. Phys. \textbf{346} (2016), no. 2, 603--665.


\bibitem{Lei16}
Zhen Lei, {\it Global well-posedness of incompressible elastodynamics in two dimensions,} Comm. Pure Appl. Math. \textbf{69} (2016), no. 11, 2072--2106.

\bibitem{LTY22a}
Jun Li, Fei Tao, Huicheng Yin, {\it Almost global smooth solutions of the 3-D quasilinear Klein-Gordon equations on the product space $\mathbb{R}^2\times\mathbb{T}$,} arXiv:2204.08130 (2022).


\bibitem{MSS-05}
J. Metcalfe, C. D. Sogge, A. Stewart, {\it Nonlinear hyperbolic equations in infinite homogeneous waveguides,} Comm Partial Differential Equations. \textbf{30} (2005), no. 4-6, 643--661.


\bibitem{MY} Shuang Miao, Pin Yu, \textit{On the formation of shocks for quasilinear wave equations},
Invent. Math. \textbf{207} (2017), no. 2, 697--831.


\bibitem{Stingo-Memoirs}
A. Stingo, {\it Global existence of small amplitude solutions for a model quadratic quasi-linear coupled
wave-Klein-Gordon system in two space dimension, with mildly decaying Cauchy data,} To appear in Memoirs of the AMS.


\bibitem{Sideris00}
T. Sideris, {\it Nonresonance and global existence of prestressed nonlinear elastic waves,} Ann. of Math. (2) \textbf{151} (2000), no. 2, 849--874.


\bibitem{TY21} Fei Tao, Huicheng Yin, {\it Global smooth solutions of the 4-D quasilinear Klein-Gordon equations on
the product space $\mathbb{R}^3\times\mathbb{T}$,} J. Differential Equations 352 (2023), 67--121.

\bibitem{Q-Wang-2015}
Qian Wang, {\it Global existence for the Einsten equations with massiive scalar fields.} 2015. Lecture at the workshop \emph{Mathematical Problems in General Relativity}.

\bibitem{Q-Wang-2020}
Qian Wang, {\it  An intrinsic hyperboloid approach for Einstein Klein-Gordon equations.} J. Differential Geom.,
115(1) (2020), 27--109.


\bibitem{Witten}
E. Witten, {\it Instability of the Kaluza-Klein vacuum,} Nuclear Physics B. \textbf{195} (1982), no. 3, 481--492.



\end{thebibliography}
\end{document}